\documentclass{amsart}
\usepackage[utf8]{inputenc}
\usepackage{amsmath}
\usepackage{amsfonts}
\usepackage{amssymb}
\usepackage{graphicx}
\usepackage{bbm}
\usepackage{mathrsfs}
\usepackage[all]{xy}
\usepackage{manfnt}
\usepackage[backref=page]{hyperref}
\usepackage[left=2cm,right=2cm,top=2cm,bottom=2cm]{geometry}
\usepackage[color=blue!30,textsize=tiny]{todonotes}

\pdfstringdefDisableCommands{}

%%%%%%%%%%%%%%%%%%%%%%%%%%%%%%%%%%%%%%%%%%%%%%%%%%
\author{Geoffrey Powell}
\title[Functors on finite sets]{Functors on the category of finite sets  revisited}
\address{Univ Angers, CNRS, LAREMA, SFR MATHSTIC, F-49000 Angers, France}
\email{Geoffrey.Powell@math.cnrs.fr}
\urladdr{https://math.univ-angers.fr/~powell/}

%%%%%%%%%%%%%%%%%%%%%%%%%%%%%%%%%%%%%%%%%%%%%%%%%
\keywords{Functors on finite sets; simple functors; projective covers; Morita equivalence}
\subjclass[2020]{18A25}
%%%%%%%%%%%%%%%%%%%%%%%%%%%%%%%%%%%%%%%%%%%%%%%%%

%%%%%%%%%%%%%%%%%%%%%%%%%%%%%%%%%%%%%%%%%%%%%%%%%

%%%%%%%%%%%%%%%%%%%%%%%%%%%%%%%%%%%%%%%%%%%%%%%%%
\newtheorem{thm}{Theorem}[section]
\newtheorem{proposition}[thm]{Proposition}
\newtheorem{corollary}[thm]{Corollary}
\newtheorem{lemma}[thm]{Lemma}
%%%%%%%%%%%%%%%%%%%%%%%%%%%%%%
\theoremstyle{definition}
\newtheorem{definition}[thm]{Definition}
\newtheorem{example}[thm]{Example}

%%%%%%%%%%%%%%%%%%%%%%%%%%%%%%
\theoremstyle{remark}
\newtheorem{remark}[thm]{Remark}
\newtheorem*{remark*}{Remark}
\newtheorem{notation}[thm]{Notation}

\newtheorem{conv}[thm]{Convention}

%%%%%%%%%%%%%%%%%%%%%%%%%%%%%%%%%%%%%%%%%%%%%%%%%%
\newcommand{\f}{\mathcal{F}}
\renewcommand{\phi}{\varphi}
\renewcommand{\hom}{\mathrm{Hom}}
\newcommand{\sym}{\mathfrak{S}}
\newcommand{\fs}{{\mathbf{FS}}}
\newcommand{\kmod}{\mathtt{Mod}_\kring}
\newcommand{\calc}{\mathcal{C}}
\newcommand{\nat}{\mathbb{N}}
\newcommand{\zed}{\mathbb{Z}}

\newcommand{\op}{^\mathrm{op}}
\newcommand{\ob}{\mathrm{Ob}\hspace{2pt}}
\newcommand{\kring}{\mathbbm{k}}
\newcommand{\fb}{\mathbf{FB}}
\newcommand{\finj}{{\mathbf{FI}}}
\newcommand{\id}{\mathrm{Id}}
\newcommand{\pbar}{\overline{P}}
\newcommand{\filt}{\mathfrak{f}}
\newcommand{\triv}{\mathsf{triv}}
\newcommand{\sgn}{\mathsf{sgn}}
\newcommand{\aut}{\mathrm{Aut}}
\newcommand{\fin}{\mathbf{FA}}
\newcommand{\hs}{\mathrm{HS}}
\newcommand{\pfin}{P^\fin}
\newcommand{\n}{\mathbf{n}}
\newcommand{\m}{\mathbf{m}}
\newcommand{\kbar}{\overline{\kring}}

\newcommand{\W}{\mathcal{S}}

\newcommand{\bse}[1]{\backslash \{ #1 \}}
\newcommand{\bs}{\backslash}
\newcommand{\llproj}{\mathbf{Q}}
\newcommand{\llop}{\mathbf{R}}
%%%%%%%%%%%%%%%%%%%%%%%%%%%%%%%%%%%%%%%%%%%%%%%%%%%%%%%%%%%%
\numberwithin{equation}{section}
\setcounter{tocdepth}{1}

\pdfmapline{=eufm7 eufm7 <eufm7.pfb}
\pdfmapline{=eufm10 eufm10 <eufm10.pfb}

\begin{document}

\begin{abstract}
We study the structure of the category of representations of $\mathbf{FA}$, the category of finite sets and all maps,  working over a unital commutative ring. We first exhibit a convenient set of projective generators by exploiting Möbius inversion and establish some general structural results. 

We then specialize to working over a field of characteristic zero. We  construct the simple representations explicitly, together with their projective covers. This recovers  the classification of the simples given by Wiltshire-Gordon.  

Hence we obtain a more efficient set of projective generators of the category of representations of $\mathbf{FA}$ and the associated  Morita equivalence. This is used to explain how to calculate the multiplicities of the composition factors of an arbitrary object, based only on its underlying $\mathbf{FB}$-representation, where $\mathbf{FB}$ is the category of finite sets and bijections. 

These results are applied to show how to calculate the morphism spaces between projectives in our chosen set of generators, as well as for a closely related family of objects. 
\end{abstract}

\maketitle

\section{Introduction}
\label{sect:intro}

Recently, the study of representations of categories of a combinatorial nature has been of major interest. (Here a representation of an essentially small category $\calc$ is understood to be  a functor from $\calc$ to $\kmod$, the category of $\kring$-modules over a unital commutative ring $\kring$; these form a category denoted $\f (\calc)$.) This is especially true for categories related to $\fin$, the category of finite sets, including the subcategory $\finj$ of finite sets and injections, the subcategory $\fs$ of finite sets and surjections, and $\fb$ that of bijections (see \cite{MR3556290} for example). Here our main interest is in  $\f (\fin)$, also referred to here as $\kring \fin$-modules.

This paper starts by establishing some results that hold without supposing that $\kring$ is a field. One has the family of standard projective generators $\pfin_\n$, for $n \in \nat$, where $\pfin_\n(X) = \kring [X]^{\otimes n}$, for a finite set $X$; $\n$ denotes the set $\{ 1, \ldots , n \}$. The functor $\pbar$ is defined by the short exact sequence 
\begin{eqnarray}
\label{eqn:pbar_ses_INTRO}
0
\rightarrow \pbar \rightarrow 
\pfin_\mathbf{1}
\rightarrow 
\kbar 
\rightarrow 
0,
\end{eqnarray}
where $\kbar$ is supported on the non-empty finite sets with constant value $\kring$ and the surjection  $\pfin_\mathbf{1}
\twoheadrightarrow 
\kbar$ is induced by the map $X \rightarrow \{*\}$ to the terminal object of $\fin$. On forming the $n$-fold tensor product, for $n \in \nat$, one has the inclusion $\pbar^{\otimes n} \hookrightarrow \pfin_\n$ that is $\sym_n\op$-equivariant; by convention, $\pbar^{\otimes 0}$ is taken to be $\kbar$. 

The functors $\pbar^{\otimes n}$ are not projective but are not far from being so. Indeed, the functor $\pbar^{\otimes n} \otimes \pfin_\mathbf{1}$ is projective (see Proposition \ref{prop:splitting_pfin_1}) and this sits in a short exact sequence 
\[
0 
\rightarrow 
\pbar^{\otimes n+1} 
\rightarrow 
\pbar^{\otimes n} \otimes \pfin_\mathbf{1}
\rightarrow 
\pbar^{\otimes n}
\rightarrow 
0.
\]

Moreover, the family $\{ \pbar^{\otimes n} \otimes \pfin_\mathbf{1} \mid n \in \nat\} \amalg \{ \pfin_\mathbf{0} = \kring \}$ gives a  more efficient set of projective generators of $\f (\fin)$. This is proved in Section \ref{sect:proj_bis} by using the general framework of Möbius inversion, thus giving a conceptual explanation of this result (see Corollary \ref{cor:llproj_projective_generators}). 

In Section \ref{sect:proj_bis}, for $(X,x)$ a finite pointed set, we work with an explicit direct summand $\llproj_\emptyset^{X,x}$ of $\pfin_X$, which is shown to be isomorphic to $\pfin_\mathbf{1} \otimes \pbar^{\otimes X \bse{x}}$. The explicit construction allows some control of morphisms between such objects; in particular we prove the fundamental result: 

\begin{thm}
(Theorem \ref{thm:morphisms_llproj}.)
For finite pointed sets $(X,x)$ and $(Y,y)$, there are natural isomorphisms:
\[
\hom_{\kring \fin} (\llproj^{X,x}_\emptyset, \llproj^{Y,y}_\emptyset)
\cong 
\left\{
\begin{array}{ll}
0 & |X|\geq |Y|+2 \\
\kring \fb (Y, X\bse{x}) & |X|= |Y|+1.
\end{array}
\right.
\]
\end{thm}

This then yields some information on morphisms between functors of the form $\pbar^{\otimes n}$: 

\begin{corollary}
(Corollary \ref{cor:morphisms_pbar}.)
For $s, t \in \nat$, if $s>t$ then 
$$
\hom_{\kring \fin} (\pbar^{\otimes s} , \pbar^{\otimes t}) =0. 
$$  
 For $s=t$,  the place permutation action on $\pbar ^{\otimes t}$ induces an isomorphism  of $\kring$-algebras:
$$
\kring \sym_t
\cong 
\mathrm{End}_{\kring \fin} (\pbar ^{\otimes t}).$$
\end{corollary}

One consequence of these results is  that, when $\kring$ is  a field, the indecomposable projectives in $\f (\fin)$ admit (finite length) composition series (see Corollary \ref{cor:finite_length_indec_projectives}).

Now, for each $n \in \nat$, there is a natural surjection of $\kring \fin\op$-modules
\begin{eqnarray}
\label{eqn:surj_fin_fs}
\kring \fin (-, \n) 
\twoheadrightarrow 
\kring \fs (-,\n),
\end{eqnarray}
where $\fs$ is the wide subcategory of $\fin$ given by the surjective maps; this surjection is defined by sending non-surjective maps to zero. Using the analysis of the projectives $\pbar^{\otimes n} \otimes \pfin_\mathbf{1}$, one has (see Corollary \ref{cor:model_right_augmentation}):

\begin{thm}
\label{thm:pbar_fs}
For $n \in \nat$, the map 
\begin{eqnarray*}
\hom_{\kring \fin} (\pfin_\n, \pfin_\bullet) 
\rightarrow 
\hom_{\kring \fin} (\pbar^{\otimes n}, \pfin_\bullet)
\end{eqnarray*}
induced by  $\pbar^{\otimes n} \hookrightarrow \pfin_\n$ identifies with (\ref{eqn:surj_fin_fs}).
 In particular, there is an isomorphism $\hom_{\kring \fin} (\pbar^{\otimes n}, \pfin_\bullet) \cong \kring \fs (\bullet , \n)$.
\end{thm}

We then turn to the case where $\kring$ is a field of characteristic zero. In this case, the simple $\kring \fin$-modules were classified by Wiltshire-Gordon \cite{2014arXiv1406.0786W}. In  \cite{2022arXiv221000399S}, Sam, Snowden and Tosteson showed that the category of $\kring \fin\op$-modules is equivalent to the category of polynomial representations of the Witt Lie algebra. Using this, they observed that Wiltshire-Gordon's simple modules correspond to a family of representations of the Witt Lie algebra constructed by Feigin and Shoikhet \cite{MR2350124}. Part of the paper revisits this classification using our more efficient set of projective generators. 

We describe the simple $\kring \fin$-modules explicitly, together with their projective covers, based on the family of  surjections (for $n \in \nat$)
 \[
\pfin_\n \twoheadrightarrow \kring \finj (\n, -)
\]
sending non-injective maps to zero, analogous to (\ref{eqn:surj_fin_fs}). This is a morphism of $\kring \sym_n\op \otimes \kring \fin$-modules. Now, for $n$ a positive integer and $\lambda \vdash n$ a partition such that $\lambda \neq (1^n)$, with associated simple $\sym_n$-module $S_\lambda$ (which is thus not isomorphic to the sign representation $\sgn_n$), one has the $\kring \fin$-module:
\[
\tilde{C}_\lambda : = \kring \finj (\n, -) \otimes _{\sym_n} S_\lambda.
\]
One also has  $S_\lambda (\pbar) := \pbar^{\otimes n} \otimes_{\sym_n} S_\lambda$, equipped with the induced map $S_\lambda (\pbar) \rightarrow \tilde{C}_\lambda$.   

For the partitions of the form $(1^n)$, one defines:
\[
\tilde{C}_{(1^n)} := \Lambda^{n-1} (\pbar).
\]
Here we use the convention that $\tilde{C}_{(1^0)} = \kring _\mathbf{0}$, the functor supported on $\emptyset$ with value $\kring$, and $\tilde{C}_{(1)} = \kbar$. There is a surjection $\Lambda^{n+1} (\pfin_\mathbf{1}) \twoheadrightarrow \Lambda^n (\pbar)$.

The classification of the simple functors is as follows:

\begin{thm}
(Theorem \ref{thm:classify_simples}.)
For $n \in \nat$ and $\lambda \vdash n$, the $\kring\fin$-module $\tilde{C}_\lambda$ is simple. Moreover, 
\begin{enumerate}
\item 
if $\lambda \neq (1^n)$, $S_\lambda (\pbar) \rightarrow \tilde{C}_\lambda$ is the projective cover of $\tilde{C}_\lambda$; 
\item 
$\Lambda^n (\pfin) \twoheadrightarrow \Lambda^{n-1}(\pbar) = \tilde{C}_{(1^n)}$ is the projective cover of $\tilde{C}_{(1^n)}$.
\end{enumerate}

For $F$  a simple $\kring\fin$-module, such that $F(\mathbf{t})=0$ for $t <n $  and  $F(\n)\neq 0$,
  $F(\n)$ is a simple $\sym_n$-module and 
$
F \cong \tilde{C}_\lambda
$
 where $F(\n) \cong S_\lambda$. (If $F(\mathbf{0})\neq 0$, this corresponds to $F \cong \kring_\mathbf{0}$.)
\end{thm}

This recovers Wiltshire-Gordon's classification  \cite{2014arXiv1406.0786W}, giving an explicit description of the simples and also of their projective covers.  This construction  of the  $\tilde{C}_\lambda$ also establishes the relation with the work of Feigin and Shoikhet \cite{MR2350124}, which can be interpreted (via the results of \cite{2022arXiv221000399S}) as being equivalent to the analysis of $\kring \finj (\n , -)$ as a $\kring \fin$-module. The latter is almost semi-simple:

\begin{corollary}
(Corollary \ref{cor:structure_pfinj}.)
For $0< n \in \nat$, there is an isomorphism of $\kring\fin$-modules:
\[
\kring \finj (\n, -) 
\cong 
\Lambda^n (\pfin_\mathbf{1}) \oplus \bigoplus_{\substack{\lambda \vdash n \\ \lambda \neq (1^n) }}
\tilde{C}_\lambda^{\oplus \dim S_\lambda}.
\]
\end{corollary}

Now, whilst $\pbar^{\otimes n}$ is  not projective,   it is very close to being so.  More precisely, writing $\mathbb{P}_n$ for the projective cover of $\pbar^{\otimes n}$, one has a $\sym_n\op$-equivariant exact sequence 
\begin{eqnarray}
\label{eqn:ses_proj_cover_INTRO}
0
\rightarrow 
\Lambda^{n+1} (\pbar ) \boxtimes \sgn_n 
\rightarrow 
\mathbb{P}_n 
\rightarrow 
\pbar^{\otimes n} 
\rightarrow 
0,
\end{eqnarray}
in which  $\Lambda^{n+1} (\pbar ) \boxtimes \sgn_n$ is simple. The explicit calculations of this paper rely upon  controlling the `error terms' that arise from the non-projectivity of $\Lambda^n (\pbar) $.

\begin{remark}
\label{rem:Dkfs_inj}
For $n \in \nat$, there is a counterpart for the structure of the  $\kring \fin$-module $D \kring \fs (-,\n)$, where $D$ is the duality functor $\f (\fin\op) \op\rightarrow \f (\fin)$ induced by vector space duality. Namely, Theorem \ref{thm:pbar_fs} together with the fact that $\pbar^{\otimes n}$ is `almost' a direct summand of $\pfin_\n$  imply that $D \kring \fs (-,\n)$ is `almost' an injective $\kring\fin$-module. The failure to be injective is encoded by the dual of the short exact sequence given in Proposition \ref{prop:calculate_sgn_otimes_Pfin}. (These observations are not developed further in this paper.)   
\end{remark}

If one defines $\mathbb{P}_{-1} := \pfin_\mathbf{0}$, then  $\{ \mathbb{P}_n \mid -1 \leq n \}$ is a set of projective generators of the category $\f (\fin)$. This yields a Morita equivalence as follows, writing $\calc (\mathbb{P}_\bullet)$ for the full subcategory of $\f (\fin)$ on these projectives:

\begin{thm}
\label{THM:Morita}
(Theorem \ref{thm:fin-modules_morita})
The category  $\f (\fin)$ is equivalent to the category of $\calc(\mathbb{P}_\bullet)\op$-modules.
\end{thm}

It follows that, to calculate the composition factors of a given $\kring \fin$-module $F$, it suffices to calculate 
$ 
\hom_{\kring \fin} (\mathbb{P}_\bullet, F)
 $ 
as a $\kring \fb$-module (see Corollary \ref{cor:multiplicity_composition_factors}).  Restricting to 
$\{ \mathbb{P}_n \mid n \in \nat \}$, one has  the following, in which, for a natural number $t$,  $\W(t) = \sum_{\ell \geq 0} (-1)^{t+\ell} [\sgn_{t+\ell}] $ (working in the Grothendieck group of $\kring \fb$-modules) and $\odot_\fb$ denotes the Day convolution product.

\begin{thm}
(Theorem \ref{thm:hom_proj_cover})
For a $\kring \fin$-module $F$ such that $\dim F(\mathbf{t})< \infty$ for all $t \in \nat$, there is an equality in the Grothendieck group of $\kring \fb$-modules
\[
[\hom_{\kring \fin}  (\mathbb{P}_\bullet, F)]
=
[\overline{F}]\odot_\fb \W(0)
\quad + \quad
\sum_{k\geq 1}
(\sgn_k \otimes_{\sym_k} F(\mathbf{k})) \otimes \W (k-1), 
\]
where $\overline{F}$ is the sub $\kring \fin$-module of $F$ supported on non-empty finite sets.
\end{thm}

These results are applied to consider the cases of most immediate interest to us, $F \in \{ \pfin_\n, \pbar^{\otimes n}, \mathbb{P}_n \mid n \in \nat \}$ in the following results:

\begin{enumerate}
\item 
Theorem \ref{thm:hom_proj_cover_pfin} determines  
$\hom_{\kring \fin} (\mathbb{P}_\bullet, \pfin_\n)$, thus giving an answer to the `urgent question' posed by Wiltshire-Gordon in \cite[Section 5.2]{2014arXiv1406.0786W} asking how to calculate the composition factors of the  projectives  $S_\lambda\otimes_{\sym_n}\pfin_\n$, for partitions $\lambda \vdash n$. 
\item 
Theorem \ref{thm:hom_proj_cover_pbar_otimes} determines $\hom_{\kring \fin} (\mathbb{P}_\bullet, \pbar^{\otimes n})$, which yields the  composition factors of the  projectives  $S_\lambda\otimes_{\sym_n}\pbar^{\otimes n}$, for partitions $\lambda \vdash n$ with $\lambda \neq (1^n)$. 
\item 
Corollary \ref{cor:endo_proj_cover} determines
$\hom_{\kring \fin}(\mathbb{P}_\bullet, \mathbb{P}_n ) $, an essential  step towards  {\em applying} the Morita equivalence result, Theorem \ref{THM:Morita}.   
\end{enumerate}

For instance, Theorem \ref{thm:hom_proj_cover_pfin} is the following result:

\begin{thm}
For $n \in \nat$, there is an  equality in the Grothendieck group of $ \kring \sym_n\op \otimes \kring \fb $-modules:
\[
[\hom_{\kring \fin}  (\mathbb{P}_\bullet, \pfin_\n)]
=
[\kring \fs (\n, -)]  
\quad + \quad
\sum_{k\geq 1}
(\sgn_k \otimes_{\sym_k} \kring \fs (\n , \mathbf{k})) \otimes [\sgn_{k-1}].
\]
\end{thm}

These results also allow the calculation of morphisms between functors of the form $\pbar^{\otimes n}$:

\begin{thm}
\label{THM:endo_pbar}
(Theorem \ref{thm:endo_pbar_otimes})
There is an  equality in the Grothendieck group of $\kring \fb$-bimodules:
\begin{eqnarray*} 
[\hom_{\kring \fin}  (\pbar^{\otimes \bullet}, \pbar^{\otimes *})]
&=&
[\kring \fs ]\odot_{\fb\op} \W(0) 
\  + \ 
\sum_{k \geq 1} \W (k) \boxtimes [\sgn_{k-1}]_\fb
\\
&=&
[\kring \fs ]\odot_{\fb\op} \W(0) 
\  + \ 
\sum_{k \geq 1}\sum_{t \in \nat} (-1)^t [\sgn_{k+t}]_{\fb\op}\boxtimes [\sgn_{k-1}]_\fb.
\end{eqnarray*}
\end{thm}

Theorem \ref{THM:endo_pbar} recovers, by very different methods, one of the results of \cite{MR4518761} (see also \cite{2024arXiv240711627P}).

\medskip
\noindent
{\bf Organization:}
The paper is presented in three Parts, separating  out the main themes. 
 Part \ref{part:arbitrary} provides background and establishes some general results that hold over any unital commutative ring. 
 Part \ref{part:zero} gives the classification of the simple $\kring \fin$-modules and their projective covers, working over a field of characteristic zero; this yields the  Morita equivalence.  
Part \ref{part:calculate}  shows how to carry out associated basic calculations, including how to calculate the composition factors of a $\kring \fin$-module.

\ 

\medskip

\noindent
{\bf Acknowledgements:}
The precursor of this research was the author's work with Christine Vespa \cite{MR4518761}, which took a very different approach. The author is very grateful  for her ongoing interest. 

The author also thanks Vladimir Dotsenko for an invitation to Strasbourg in January 2024 (financed by his {\em IUF}) and  for useful discussions - during which the work of Feigin and Shoikhet appeared from a different viewpoint. Some of the research underlying this paper was carried out during that visit.

The author is also immensely grateful to the referee, both for their careful reading of the manuscript and for their suggestion that Möbius inversion gives a conceptual approach to the general results of Part \ref{part:arbitrary}, also strengthening some results, for which they provided details. This lead to the current, less {\em ad hoc} presentation of Section \ref{sect:proj_bis} and also lead the author to only impose the hypothesis that $\kring$ is a field when necessary. 

\tableofcontents

\part{Working over a unital commutative ring}
\label{part:arbitrary}

\section{Background}
\label{sect:background}

The purpose of this section is to recall some generalities on the structure of the category of $\kring\calc$-modules, where $\calc$ is an essentially small category. Then we specialize to the cases of interest, related to the category $\fin$ of finite sets. This serves to introduce notation and some basic constructions.
 Unless otherwise stated, $\kring$ is a unital commutative ring. 
 
%%%%%%%%%%%%%%%%%%%%%%%%%%%%%%%%%%%%%%%%%%%%%%%%%
\subsection{Basics}
\label{subsect:basics}

Denote by $\fin$ the category of finite sets (and all maps), $\finj$ the wide subcategory of finite sets and injections, $\fs$ that of surjections, and $\fb$ that of bijections. Thus there is a commutative diagram of inclusions of wide subcategories:
\[
\xymatrix{
\fb 
\ar@{^(->}[r]
\ar@{^(->}[d]
&
\finj
\ar@{^(->}[d]
\\
\fs 
\ar@{^(->}[r]
&
\fin.
}
\]
 These categories each have skeleton given by the objects $\mathbf{n}:= \{ 1, \ldots, n\}$, for $n\in\nat$ (note that $\mathbf{0} = \emptyset$). The disjoint union $\amalg$ of finite sets is the coproduct in $\fin$; it restricts to symmetric monoidal structures on $\finj$,  $\fs$, and $\fb$. 

For $\calc$ an essentially small category, $\f (\calc)$ denotes the category of functors from $\calc$ to $\kmod$, the category of $\kring$-modules. It is equivalent to the category of $\kring$-linear functors from $\kring \calc$ (the $\kring$-linearization of $\calc$) to $\kmod$. Such functors will be referred to as $\kring \calc$-modules.

The category $\f(\calc)$ is abelian: a complex in $\f(\calc)$ is exact if and only if, for each $X \in\ob \calc$, the complex in $\kmod$ obtained by evaluating on $X$ is exact. 
This category is equipped with the tensor product $\otimes$ defined value-wise: for functors $F$, $G$ in $\f (\calc)$,  $(F \otimes G) (X) := F(X) \otimes G(X)$ (forming $\otimes := \otimes_\kring$ in $\kmod$).

\begin{example}
\label{exam:fb-modules_conv}
The category of $\kring \fb$-modules is equivalent to the category of symmetric sequences: objects are sequences $(M(n) \ | \ n \in \nat)$ of representations of the symmetric groups (i.e., for each $n$, $M(n)$ is a $\kring \sym_n$-module, where $\sym_n$ is the symmetric group on $n$ letters); morphisms are sequences of equivariant maps. Given a $\kring \sym_n$-representation $M$ for some $ n \in \nat$, this can be considered as a $\kring \fb$-module supported on sets of cardinal $n$.

In addition to the pointwise tensor product, the category of $\kring \fb$-modules is equipped with the Day convolution product $\odot$, which yields a symmetric monoidal structure. This is defined explicitly by $(M \odot N)(S) := \bigoplus_{S_1 \amalg S_2= S} M(S_1) \otimes N(S_2)$, where the sum runs over ordered decompositions of $S$ into two subsets. 

The category $\fb$ is a groupoid, thus the inverse gives an isomorphism of categories $\fb \cong \fb\op$ and hence of their linearizations. This implies that the category of $\kring \fb\op$-modules is isomorphic to that of $\kring \fb$-modules. The convolution product $\odot$ has a counterpart for $\kring \fb\op$-modules; where necessary, these will be distinguished by writing $\odot_\fb$ and $\odot_{\fb\op}$ respectively.
\end{example}

\begin{example}
\label{exam:otimes_fb}
The tensor product $\otimes_{\sym_n}$ for representations of the symmetric group $\sym_n$ has a `global' form
$
\otimes _\fb : \f(\fb\op) \times \f (\fb) \rightarrow \kmod$.
 It is given  for a $\kring \fb\op$-module $X$ and a $\kring \fb$-module $M$ by:
\[
X \otimes _\fb M 
:=
\bigoplus_{n\in \nat}
X (\n) \otimes_{\sym_n} M(\mathbf{n}).
\]
\end{example}

\begin{example}
\label{exam:kfin}
Forgetting structure, one can consider $\kring \fin  :  (S, T) \mapsto \kring \fin (S,T)$ as a $\kring \fb$-bimodule (i.e., a $\kring (\fb\op \times \fb)$-module) and likewise for $\kring \finj $ and $\kring \fs$. Since every map between finite sets $f: S \rightarrow T$ factors canonically as $S \twoheadrightarrow \mathrm{image}(f) \subset T$, 
one has the isomorphism of $\kring \fb$-bimodules:
\[
\kring \fin \cong \kring \finj \otimes_\fb \kring \fs,
\]
where the variance dictates how $\otimes_\fb$ is formed.

More precisely, this can be interpreted as an isomorphism of $\kring \finj \otimes \kring \fs\op$-modules, where the left hand side is given the module structure by restriction of the canonical $\kring \fin$-bimodule structure and the right hand side has the obvious structure. 
\end{example}

\begin{notation}
\label{nota:triv}
Write $\triv$ for the constant $\kring \fb$-module with value $\kring$, so that $\triv(\n)$ identifies as the trivial representation $\triv_n$ of $\kring \sym_n$. (The same notation will be used when working with $\kring \fb\op$-modules.)
\end{notation}

\begin{proposition}
\label{prop:identify_induction}
The composite of the functor $\kring \finj \otimes_\fb - : \f (\fb) \rightarrow \f (\finj)$ with the restriction $\f (\finj) \rightarrow \f (\fb)$ induced by the inclusion $\fb \subset \finj$ 
 is naturally equivalent to the functor 
$\triv \odot - : \f (\fb) \rightarrow \f (\fb)$.
\end{proposition}

\begin{proof}
We require to show that, for each $n \in \nat$, the functors $(\triv \odot - )(\n)$ and $\kring \finj (-,\n) \otimes_\fb -$ are naturally equivalent (taking values in $\kring \sym_n$-modules).

Now, for $s \in \nat$ (that we may assume to be at most $n$), $\kring \finj (\mathbf{s}, \mathbf{n})$ is isomorphic to the $\kring (\sym_n \times \sym_s\op)$-module $\kring \sym_n / \sym_{n-s}$, with $\sym_s$ acting on the right via the residual right regular action (use the inclusion of Young subgroup $\sym_{n-s} \times \sym_s \subset \sym_n$) and the left $\sym_n$-action is the usual one. Hence one sees that  $\kring \finj (\mathbf{s}, \mathbf{n}) \otimes_{\sym_s} M(s) $ is naturally isomorphic to 
$\triv_{n-s} \odot M(s)$. Summing over $s$ gives the result.   
\end{proof}

\begin{example}
\label{exam:kring_fin_odot}
By Example \ref{exam:kfin},  the underlying $\kring \fb$-bimodule of $\kring \fin$ identifies as 
$ 
\triv \odot_\fb \kring \fs$. 
\end{example}

%%%%%%%%%%%%%%%%%%%%%%%%%%%%%%%%%%%%%%%%%%%%%%%%%%%%%%%%%%%%%%
\subsection{Projectives}

The category  $\f (\calc)$ has enough projectives. In particular, for an object $X$ of $\calc$, one has the standard projective $P^\calc_X$ in $\f (\calc)$ given by $\kring \hom_\calc (X, -) = \kring \calc( X, -)$; by Yoneda's lemma, this corepresents evaluation on $X$.

\begin{example}
\label{exam:standard_projective_fin}
For $n \in \nat$, one has the projective $\pfin_\n = \kring \fin (\n, -)$. Since $\n$ is isomorphic to the $n$-fold coproduct of copies of  $\mathbf{1}$, one has the isomorphism
\[
\pfin_\n \cong (\pfin_{\mathbf{1}})^{\otimes n}
\] 
that is $\kring \sym_n\op$-equivariant, where the symmetric group acts via automorphisms of $\n \cong \mathbf{1}^{\amalg n}$ on the left hand side and by place permutations of the tensor product on the right.

The functor $\pfin_{\mathbf{1}}$ identifies as $X \mapsto \kring [X]$, where $\kring [X]$ is the $\kring$-module freely generated by $X$; this comes with the canonical basis $\{ [x] \ | \ x \in X \}$. Hence $\pfin_\n$ identifies as the functor $X \mapsto \kring [X] ^{\otimes n}$.

Morphisms between standard projectives are understood by Yoneda's lemma, which gives:
\[
\hom_{\kring\fin} (\pfin_\n, \pfin_{\mathbf{t}}) \cong \kring \fin (\mathbf{t}, \n).
\]
This corresponds to the fully faithful embedding $
\kring \fin \op \hookrightarrow \f (\fin)$ sending $\n$ to $\pfin_\n$. 
\end{example}

Part of the relationship between $\kring \fin$ and $\kring \finj$ exhibited in Example \ref{exam:kfin} is made more precise in the following:

\begin{proposition}
\label{prop:pfinj_fin-module}
For $n \in \nat$, the canonical $\kring\finj$-module structure of $P^\finj_\n = \kring \finj (\n, -) $ extends to a $\kring\fin$-module structure. This is the unique structure such that the surjection 
\[
\pfin_\n = \kring \fin(\n, -) 
\twoheadrightarrow 
P^\finj_\n = \kring \finj (\n, -)
\]
induced by 
sending non-injective maps to zero and injective maps to themselves is a morphism of $\kring\fin$-modules. 
\end{proposition}

\begin{proof}
For each $t \in \nat$, one has $\fin^{\mathrm{non-inj} } (\n, \mathbf{t}) \subset \fin (\n , \mathbf{t})$, the set of non-injective maps. Since the property of being non-injective is preserved under post-composition, on passing to the $\kring$-linearization, one obtains a sub $\kring\fin$-module $\kring \fin^{\mathrm{non-inj} } (\n, -)$ of $\kring \fin (\n, -)$. The quotient by this submodule is thus a $\kring\fin$-module. By construction, on restricting to $\finj \subset \fin$, this quotient is isomorphic to $\kring \finj (\n, -)$. 
\end{proof}

\begin{remark}
\label{rem:left_aug}
This Proposition exhibits a left augmentation of the `unit' $\kring \finj \hookrightarrow \kring \fin$ (corresponding to the canonical inclusion), in the terminology of Positselski \cite{MR4398644}. Left (respectively right) augmentations are exploited in \cite{P_rel_kos}, using results of Positselski.  
\end{remark}

Proposition \ref{prop:pfinj_fin-module} provides a presentation of $P^\finj_\n$ considered as a $\kring\fin$-module:

\begin{corollary}
\label{cor:present_pfinj}
For $n\in \nat$, there is a presentation in $\f(\fin)$: 
\[
\bigoplus_{\substack{f \in \hom_{\kring \fin}  (\pfin_\mathbf{s} , \pfin_\n) \\
s < n}}
\pfin_{\mathbf{s}}
\rightarrow 
\pfin_\n
\rightarrow 
\kring \finj (\n, -) 
\rightarrow 
0,
\]
where the factor $\pfin_{\mathbf{s}}$ indexed by $f$ maps to $\pfin_\n$ by $f$.
\end{corollary}

\begin{proof}
An element $f \in \hom_{\kring\fin} (\pfin_\mathbf{s} , \pfin_\n) \cong \kring \fin (\n , \mathbf{s}) $ is a $\kring$-linear combination of maps in $\fin (\n , \mathbf{s})$. If $s<n$, then every map in $\fin (\n , \mathbf{s})$ is non-injective. In particular, the corresponding map $\pfin_\mathbf{s} \rightarrow \pfin_\n$ maps to the kernel of the surjection $\pfin_\n
\rightarrow 
\kring \finj (\n, -)$. Moreover, from the construction (also compare Example \ref{exam:kfin}), it follows that the image of the left hand map is $\kring \fin^{\mathrm{non-inj}}(\n, -)$, using the notation of the proof of Proposition \ref{prop:pfinj_fin-module}.
\end{proof}

There is a categorically dual version of Proposition \ref{prop:pfinj_fin-module} for surjective maps: 

\begin{proposition}
\label{prop:pfsop_finop-module}
For $n \in \nat$, the canonical $\kring\fs\op$-module structure of $P^{\fs\op}_{\mathbf{n}} = \kring \fs (-, \n)$ extends to a $\kring\fin\op$-module structure. This is the unique structure such that the surjection 
\[
P^{\fin\op}_\n = \kring \fin (-, \n) 
\twoheadrightarrow 
\kring \fs (-, \n) 
\]
induced by sending non-surjective maps to zero is a morphism of $\kring\fin\op$-modules.
\end{proposition}

\begin{proof}
The proof is categorically dual to that of Proposition \ref{prop:pfinj_fin-module}, using the fact that the property of being non-surjective is preserved under precomposition.
\end{proof}

\begin{remark}
\label{rem:right_aug}
Analogously to Remark \ref{rem:left_aug}, this Proposition exhibits a right augmentation of the unit $\kring \fs \hookrightarrow \kring \fin$ (corresponding to the canonical inclusion). 
\end{remark}

%%%%%%%%%%%%%%%%%%%%%%%%%%%%%%%%%%%%%%%%%%%%%%%%%%%%%%%%%%%%
\subsection{A norm-like map}
\label{subsection:norm_map}

To simplify the exposition, in this subsection we suppose  that $\kring$ is a field.  Vector space duality induces an exact functor $D : \f (\calc)\op \rightarrow \f (\calc\op)$ where, for a $\kring\calc$-module $F$, the dual $DF$ is defined by $DF (X):= \hom_{\kmod} (F(X), \kring)$. 

Fix a finite set $X$. One has the inclusion of unital monoids $\aut (X) \hookrightarrow \fin (X,X)$. On passing to the $\kring$-linearizations, one also has the surjection of $\kring$-algebras 
\[
\kring \fin (X,X) \twoheadrightarrow \kring \aut (X)
\]
induced by sending non-bijective maps to zero. Clearly this has a section given by the canonical inclusion $\kring \aut (X) \hookrightarrow  \kring \fin (X,X)$. We can thus form the following composite, in which the first map is  induced by composition in $\fin$:
\begin{eqnarray}
\label{eqn:compose_to_aut}
\kring \fin (-,X) \otimes_\fin \kring \fin (X, -) 
\rightarrow 
\kring \fin (X,X) 
\rightarrow 
\kring 
\aut (X).
\end{eqnarray}
This composite factors using the projections of Propositions \ref{prop:pfinj_fin-module} and \ref{prop:pfsop_finop-module} to give 
\[
\kring \fs (-, X) \otimes_\fin \kring \finj (X, -)\rightarrow \kring \aut (X)
\]
and this is a morphism of $\kring \aut (X)$-bimodules.
 By adjunction, this corresponds to a morphism
\begin{eqnarray}
\label{eqn:adjoint_aut}
\kring \finj (X, - ) \rightarrow \hom_{\kring \aut(X) } (\kring \fs (-, X), \kring \aut (X)),
\end{eqnarray}
 of $\kring \aut (X)\op\otimes \kring \fin$-modules (for the action of $\kring \aut (X)\op$ on the codomain arising from the right regular action on $\kring \aut (X)$).

Using that coinduction is equivalent to induction for finite groups, one has:

\begin{lemma}
\label{lem:dual-bimodule}
There is an isomorphism of $\kring \aut (X)\op\otimes \kring \fin$-modules
\[
\hom_{\kring \aut(X) } (\kring \fs (-, X), \kring \aut (X))
\stackrel{\cong}{\rightarrow}  
D \kring \fs (-,X) = \hom_\kring (\kring \fs (-,X), \kring) 
\]
induced by the $\kring$-linear surjection $\kring \aut (X) \twoheadrightarrow \kring$ that sends a generator $[\alpha]$ ($\alpha \in \aut (X)$) to zero unless $\alpha = \id_X$, when the image is $1$.
\end{lemma}

Hence (\ref{eqn:adjoint_aut}) can be written as 
\begin{eqnarray}
\label{eqn:adjoint_aut_D}
\kring \finj (X, -) \rightarrow D \kring \fs (-, X). 
\end{eqnarray}

\begin{proposition}
\label{prop:nat_finj_fsop}
For a finite set $X$,  
the natural transformation (\ref{eqn:adjoint_aut_D}) is a morphism of $\kring \aut (X)\op\otimes \kring \fin$-modules.
 Evaluated on $X$, this gives an isomorphism
\[
\kring \finj (X,X)
\cong \kring \aut (X) 
 \stackrel{\cong}{\rightarrow} D \kring \fs (X,X) \cong D \kring \aut (X).
\] 
\end{proposition}

\begin{proof}
The first statement is an immediate consequence of the construction.

For the final statement, using $\fs (X,X)$ as the canonical basis of $\kring \fs (X,X)$, the dual basis provides the isomorphism of $\kring$-vector spaces $D \kring \fs (X,X) \cong \kring \fs(X,X)$, so that the map identifies as a $\kring$-linear map $
 \kring \aut (X) 
\rightarrow   \kring \aut (X)$.
 It is straightforward to check that this is given by $[\alpha] \mapsto [\alpha^{-1}]$, where $\alpha \in \aut (X)$. In particular, this is an isomorphism, as required.
\end{proof}

\begin{remark}
The construction of (\ref{eqn:adjoint_aut_D}) can also be carried out by first constructing the natural transformation 
\begin{eqnarray}
\label{eqn:norm_fin}
\kring \fin (X, -) 
\rightarrow 
D \kring \fin (-,X) 
\end{eqnarray}
adjoint to composition and then observing that it factors across   (\ref{eqn:adjoint_aut_D}) via 
the projections of Propositions \ref{prop:pfinj_fin-module} and \ref{prop:pfsop_finop-module}.
\end{remark}

\section{Filtering the standard projectives}
\label{sect:proj}

In this section we  commence the analysis of the structure of the projectives of $\f (\fin)$, working over a unital commutative ring $\kring$. 
 We first introduce the subfunctor $\pbar\subset \pfin_\mathbf{1}$, whence the tensor products $\pbar^{\otimes t}$ and $\pfin_\mathbf{1} \otimes \pbar^{\otimes t}$ for $t \in \nat$. 
These are key players in the paper; in particular, Proposition \ref{prop:splitting_pfin_1} gives a simple direct proof of the fact that $\pfin_\mathbf{1} \otimes \pbar^{\otimes t}$ is projective. In Section \ref{sect:proj_bis} it is explained how this  yields a more efficient set of projective generators of $\f (\fin)$.

The projective $\pfin_{\mathbf{0}}$ identifies as the constant functor with value $\kring$. There is a non-split short exact sequence 
\begin{eqnarray}
\label{eqn:ses_k}
0
\rightarrow 
\kbar 
\rightarrow 
\kring 
\rightarrow 
\kring_\mathbf{0}
\rightarrow 
0
\end{eqnarray}
where $\kring_\mathbf{0}$ is the functor  supported on $\mathbf{0}$ with value $\kring$  and $\kbar \subset \kring$ is the subfunctor supported on non-empty finite subsets. 

\begin{remark}
\label{rem:ses_kring}
If $\kring$ is a field, then both $\kring_\mathbf{0}$ and $\kbar$ are simple $\kring\fin$-modules.
\end{remark}

Now consider $\pfin_\mathbf{1}$, the functor $X \mapsto \kring [X]$ ($\kring [X]$ may also be written  as $\kring X$). By Yoneda, $\hom_{\kring \fin} (\pfin_{\mathbf{1}}, \pfin_\mathbf{0}) \cong \kring$, with generator 
\begin{eqnarray}
\label{eqn:proj_kX_k}
 \kring [X] \rightarrow \kring \cong \kring [*]
\end{eqnarray}
induced by the  map $X \rightarrow *$ to the terminal object of $\fin$. The image of this  map is $\kbar$.

\begin{notation}
\label{nota:pbar}
Denote by $\pbar$ the kernel of $\pfin_{\mathbf{1}} \twoheadrightarrow \kbar$.
\end{notation}

By construction one has the short exact sequence in $\f (\fin)$:
\begin{eqnarray}
\label{eqn:ses_pbar}
0
\rightarrow 
\pbar
\rightarrow 
\pfin_{\mathbf{1}}
\rightarrow 
\kbar 
\rightarrow 
0.
\end{eqnarray}

From the definition, the following is clear:

\begin{lemma}
\label{lem:basis_pbar}
For $X$ a non-empty finite set and $x \in X$, $\pbar (X) \subset \kring [X]$ has a non-canonical basis given by 
 $\big\{ [y]- [x] \ | \ y \in X \backslash \{ x \} \big\}$.  
\end{lemma}

\begin{lemma}
\label{lem:non-split}
The short exact sequence  (\ref{eqn:ses_pbar}) does not split. In particular, $\kbar$ is not projective. 
\end{lemma}

\begin{proof}
Were the short exact sequence to split, there would be a non-trivial surjection $\pfin_{\mathbf{1}} \twoheadrightarrow \pbar$. However, $\hom_{\f (\fin)} (\pfin_{\mathbf{1}} , \pbar)=0$, by Yoneda, since $\pbar(\mathbf{1}) =0$ by Lemma \ref{lem:basis_pbar}.
\end{proof}

Using the tensor product in $\f(\fin)$, for each $n \in \nat$, one can form the $n$-fold tensor product $\pbar^{\otimes n}$, adopting the following convention for $n=0$:

\begin{conv}
\label{conv:pbar_tensor_0}
The functor $\pbar^{\otimes 0}$ is taken to be $\kbar$.
\end{conv}

Forming the tensor product with $\kbar$ gives an idempotent endofunctor of $\f (\fin)$ (idempotency follows from the obvious isomorphism $\kbar \otimes \kbar \cong \kbar$). For a $\kring\fin$-module $F$, the inclusion $\kbar \subset \kring$ induces 
$
F \otimes \kbar \subset F
$,  
with image identifying as the subfunctor of $F$ supported on non-empty finite sets, which is denoted $\overline{F}$. Clearly $F \otimes \kbar$ is isomorphic to $F$ if and only if $F(\mathbf{0}) =0$. In particular one has:

\begin{lemma}
\label{lem:pfin_kbar}
For $n \in \nat$, $\pfin_\n \otimes \kbar \cong \pfin_\n$ if and only if $n>0$.
\end{lemma}

Then, for $n$ a positive integer, forming the complex $(\pfin_\mathbf{1} \rightarrow \kbar)^{\otimes n}$ gives the following generalization of the short exact sequence (\ref{eqn:ses_pbar}):

\begin{proposition}
\label{prop:exact_seq_pbar_otimes}
For a positive integer $n$, there is an exact sequence 
\[
0
\rightarrow 
\pbar^{\otimes n}
\rightarrow 
\pfin_\n
\rightarrow 
\bigoplus_{i=1}^n
\pfin_{\n\backslash \{ i \}}
\rightarrow 
\bigoplus_{\substack{T \subset \n \\ |T|=2 }}
\pfin_{\n\bs T}
\rightarrow 
\ldots 
\rightarrow 
\kbar 
\rightarrow 
0
\]
where the interior terms of the complex, indexed by  $t \in \{0,\ldots , n-1\}$,   are of the form $\bigoplus_{\substack{T \subset \n \\ |T|=t }}
\pfin_{\n\bs T}$; the differential between these is a (signed) sum of the morphisms induced by the inclusions $\n \bs T \subset \n \bs T'$ for $T' \subset T \subset \n$. 
\end{proposition}

Similarly, using the tensor product filtration for $(\pbar \subset \pfin_\mathbf{1})^{\otimes n}$, one has:

\begin{proposition}
\label{prop:filter_pfin_n}
For $0<n \in \nat$, $\pfin_\n$ admits a natural filtration in $\kring \sym_n\op \otimes \kring \fin$-modules:
\[
0= \filt_{n+1} \pfin_\n \subset \filt_n \pfin_\n  \subset \filt_{n-1} \pfin_\n \subset \ldots \subset \filt_0 \pfin_\n = \pfin_\n
\]
such that, for each $0 \leq k \leq n$,
\[
\filt_k \pfin_\n / \filt_{k+1} \pfin_\n \cong \pbar^{\otimes k} \odot \triv_{n-k}.
\]
\end{proposition}

\begin{remark}
The case $n=0$ could also be treated, at the expense of complicating the statement. This simply yields the filtration of $\pfin_{\mathbf{0}}$ corresponding to the short exact sequence (\ref{eqn:ses_k}).
\end{remark}

Lemma \ref{lem:pfin_kbar} also  yields the following projectivity result:

\begin{proposition}
\label{prop:splitting_pfin_1}
If $Q$ is a direct summand of $\pfin_\n$ for some $n>0$,   then $Q \otimes \pbar$ is projective in $\f (\fin)$ and is a direct summand of $\pfin_{\mathbf{n+1}}$.

Hence, for any $t \in \nat$, the functor $\pfin_\mathbf{1} \otimes \pbar^{\otimes t}$ is projective.
\end{proposition}

\begin{proof}
Applying $Q \otimes - $ to the short exact sequence (\ref{eqn:ses_pbar}) yields the short exact sequence 
\[
0
\rightarrow 
Q \otimes \pbar
\rightarrow 
Q \otimes \pfin_{\mathbf{1}}
\rightarrow 
Q \otimes \kbar 
\rightarrow 
0.
\]
The hypothesis upon $Q$ implies that $Q(\mathbf{0})=0$ and hence that $Q \otimes \kbar$ is isomorphic to $ Q$. Hence the short exact sequence in the statement splits, since $Q$ is projective. The final statement follows by using the isomorphism $\pfin_{\mathbf{n+1}} \cong \pfin_\n \otimes \pfin_{\mathbf{1}}$, so that $Q \otimes \pfin_{\mathbf{1}}$ is a direct summand of $\pfin_{\mathbf{n+1}}$ and is projective.

The projectivity of $\pfin_\mathbf{1} \otimes \pbar^{\otimes t}$ then follows by an obvious induction upon $t$.
\end{proof}

\begin{remark}
An alternative proof of the projectivity of $\pfin_\mathbf{1} \otimes \pbar^{\otimes t}$ is given in Section \ref{sect:proj_bis}, using the identification given in Proposition \ref{prop:llproj_identify}. 
\end{remark}

\begin{remark}
\label{rem:ses_Pfin_tensor_Pbars}
By construction (see Corollary \ref{cor:ses_llproj}), for $t \in \nat$, there is a short exact sequence:
\begin{eqnarray}
\label{eqn:ses_Pfin_tensor_Pbars}
0
\rightarrow 
\pbar ^{\otimes t+1} 
\rightarrow 
\pfin_{\mathbf{1}} \otimes \pbar^{\otimes t}
\rightarrow 
\pbar ^{\otimes t}
\rightarrow 
0. 
\end{eqnarray}
This does not split in general (if $\kring$ has characteristic zero, this  follows from the results of Section \ref{sect:charzero}), whence $\pbar ^{\otimes t}$ is not projective.

These short exact sequences splice to give a projective resolution of $\kbar$:
\begin{eqnarray}
\label{eqn:proj_resolution_kbar_general}
\ldots \rightarrow 
\pfin_{\mathbf{1}} \otimes \pbar^{\otimes t+1}
\rightarrow 
\pfin_{\mathbf{1}} \otimes \pbar^{\otimes t}
\rightarrow 
\ldots 
\rightarrow 
\pfin_{\mathbf{1}} \otimes \pbar
\rightarrow 
\pfin_{\mathbf{1}},
\end{eqnarray}
where $t$ corresponds to the homological degree. Splicing with $0\rightarrow \kbar \rightarrow \kring = \pfin_{\mathbf{1}} \rightarrow \kring _\mathbf{0} \rightarrow 0$ gives a projective resolution of $\kring_\mathbf{0}$.

Truncating by omitting terms in homological degree less than $n$, for $n$ a positive integer,  yields a projective resolution of $\pbar^{\otimes n}$.
\end{remark}

For later use, we record 
the following, which  gives further information on the surjection of  Proposition \ref{prop:pfinj_fin-module}:

\begin{proposition}
\label{prop:refine_surject_to_pfinj}
For $0< n \in \nat$, the composite 
$
\pfin_\mathbf{1} \otimes \pbar^{\otimes n-1}
\hookrightarrow 
\pfin_\n 
\twoheadrightarrow 
\kring \finj (\mathbf{n}, -) 
$
is surjective. 
\end{proposition}

\begin{proof}
It suffices to show that, for $f : \n \hookrightarrow X$ an injection, the generator $[f] \in \kring \finj (\mathbf{n}, X)$ lies in the image. 
 Write $x_i := f(i)$, for $i \in \n$. Then one has the element: 
\[
[x_1] \otimes ([x_2 ] - [x_1]) \otimes \ldots \otimes ([x_n ] - [x_1])
\in 
\big( \pfin_\mathbf{1} \otimes \pbar^{\otimes n-1} \big ) (X). 
\]
One checks directly that this maps to $[f]$, as required.
\end{proof}

%%%%%%%%%%%%%%%%%%%%%%%%%%%%%%%%%%%%%%%%%%%%%%%%%%%
%\input{insert}

\section{Splitting the projectives and applications}
\label{sect:proj_bis}

The purpose of this section is to revisit the projectives of the form $\pfin_\mathbf{1}\otimes \pbar^{\otimes n}$ and establish some of their properties.
 For this, in Section \ref{subsect:split_mobius_inversion}, the projective direct summands $\llproj^{X,x}_\emptyset$ of $\pfin_X$ are introduced, where $x \in X$. Their significance is established by Corollary \ref{cor:llproj_projective_generators} which shows how they yield a set of projective generators of $\f (\fin)$. A counterpart is established for $\f (\fin\op)$.

Section \ref{subsect:identify_llproj_llop} makes the structure of these projectives more explicit, in particular establishing the relation with the projectives introduced in Section \ref{sect:proj}. This analysis is essential input to the proof of  Theorem \ref{thm:morphisms_llproj}  in Section \ref{subsect:morphisms_llproj}. 

These results are then used in Section \ref{subsect:calculate_hom_pbar_otimes_pfin} to study morphisms from $\pbar^{\otimes n}$ to the standard projectives $\pfin_\mathbf{t}$, for $t \in \nat$, together with its applications. The main result of this subsection is Theorem \ref{thm:morphism_pbar_otimes_n_Pfin}.   

Finally, in Section \ref{subsect:finiteness_properties} these results are applied to establish some basic finiteness properties, also giving some initial information on the simple 
 $\kring \fin$-modules. 
 
Throughout this section, unless indicated otherwise, $\kring$ is a unital, commutative ring.

%%%%%%%%%%%%%%%%%%%%%%%%%%%%%%%%%%%%%%%%%%%%%%%%%%%%%%%%%%%%%%%%%%%
\subsection{Splitting $\pfin_X$ using Möbius inversion}
\label{subsect:split_mobius_inversion}

In this section a splitting of $\pfin_X$ is given by using the general, conceptual framework of M\"obius inversion. 

\begin{remark*}This approach (together with indications of proofs) was suggested by the referee, together with ingredients feeding into Theorem \ref{thm:morphisms_llproj}, which was proposed by the referee.
\end{remark*}

We work with finite pointed sets of the form $(X,x)$, so that $X$ is a finite set and $x \in X$. (This will allow the naturality of the constructions to be analysed should that be required.)

\begin{definition}
\label{defn:i,s,f,e}
Suppose that $(X, x)$ is a pointed finite set  and  $T,Z \subseteq X \bse{x}$ are subsets. 
\begin{enumerate}
\item 
Let $i^X_T \in \fin (X \bs T, X) $ be the inclusion $X \bs T \subset X$ and $s^{X,x}_T \in \fin (X, X \bs T) $ be the surjection given by 
\[
s^{X,x}_T \colon 
y \mapsto \left\{ 
\begin{array}{ll}
x & y \in T \\
y & \mbox{otherwise.}
\end{array}
\right.
\]
\item 
Define  $f^{X,x}_T \in \fin (X,X)$ by $f^{X,x}_T := i^X_T \circ s^{X, x}_T$.
\item 
Define $e^{X,x}_Z \in \kring \fin (X,X)$ by 
\[
e^{X,x}_Z:= \sum_{Z \subseteq Y \subseteq X \bse{x} } (-1) ^{|Y\bs Z|} [f^{X,x}_Y].
\]
\end{enumerate}
\end{definition}

The following records some basic properties of these morphisms for later use.

\begin{lemma}
\label{lem:basic_i,s,f,e}
For $(X, x)$ a finite pointed set and subsets $T, T' \subseteq X \bse{x}$, 
\begin{enumerate}
\item 
\label{item:s_circ_i}
$s^{X,x}_T \circ i^X_T = \id_{X \bs T}$ in $\fin (X\bs T , X \bs T)$; 
\item 
\label{item:f_circ_f}
$f^{X,x}_T \circ f^{X,x}_{T'} = f^{X,x}_{T\cup T'} = f^{X,x}_{T'} \circ f^{X,x}_T$ in $\fin (X,X)$;
\item 
\label{item:f_circ_e}
$f^{X,x}_T \circ e^{X,x}_T = e^{X,x}_T$  in $\kring \fin (X , X)$; 
\item 
\label{item:i_circ_e}
$i^X_T \circ e^{X \bs T, x}_\emptyset = e^{X,x}_T \circ i^{X,x}_T$ in $\kring \fin (X \bs T, X)$.
\end{enumerate}
\end{lemma}

\begin{proof}
Point (\ref{item:s_circ_i}) is clear and (\ref{item:f_circ_f}) is a straightforward verification. 
The property (\ref{item:f_circ_e}) is established readily by using (\ref{item:f_circ_f}) and property (\ref{item:i_circ_e}) is again a straightforward verification.
\end{proof}

The significance of the $e^{X,x}_T$ is established by the following, which is an immediate consequence of the Möbius inversion result \cite[Theorem A.1]{MR3466554}.

\begin{proposition}
\label{prop:mobius_inversion}
For $(X,x)$ a finite pointed set,  $\{ e^{X,x}_T \mid \emptyset \subseteq T \subseteq X \bse{x}\}$  is a set of orthogonal idempotents that sum to $[\id_X] \in \kring \fin (X,X)$.
\end{proposition}

\begin{remark}
A direct proof of the idempotency of $e^{X,x}_\emptyset$  was given  in a previous version of this paper, where the idempotent was denoted $\pi_\n$, for $n= |X \bse{x}|$. 
\end{remark}

\begin{definition}
\label{defn:llproj}
For $(X,x)$ a finite pointed set and $T \subseteq X \bse{x}$, define $\llproj^{X,x}_T$ by
\[
\llproj^{X,x}_T := \kring \fin (X, \bullet) e^{X,x}_T  \in \ob \f (\fin).
\]
\end{definition}

By Proposition \ref{prop:mobius_inversion}, there is a direct sum decomposition as a finite direct sum of projectives in $\f (\fin)$:
\begin{eqnarray}
\label{eqn:decompose_Pfin_X}
\pfin_X 
\cong 
\bigoplus_{\emptyset \subseteq Y \subseteq X \bse {x}} \llproj ^{X,x}_Y.
\end{eqnarray}

One can reduce to considering the projectives of the form $\llproj^{X\bs Y, x}_\emptyset$ using the following morphisms.

\begin{definition}
\label{defn:a_b}
For $(X,x)$ a pointed finite set and $T \subseteq X \bse{x}$, let 
\begin{enumerate}
\item 
$a^{X,x}_T \in \kring \fin (X \bs T, X)$ be the composite $e^{X,x}_T \circ i^X_T \circ e^{X \bs T, x}_\emptyset$; 
\item 
$b^{X,x}_T \in \kring \fin (X, X \bs T)$ be the composite $e^{X\bs T,x}_\emptyset \circ s^{X,x}_T \circ e^{X , x}_T$. 
\end{enumerate}
\end{definition}

These morphisms relate the idempotents $e^{X,x}_T$ and $e^{X \bs T, x} _\emptyset$ as follows:

\begin{proposition}
\label{prop:a_b}
For $(X,x)$ a pointed finite set and $T \subseteq X \bse{x}$, there are equalities: 
\begin{eqnarray*}
b^{X,x}_T \circ a ^{X,x}_T& = &e^{X \bs T, x}_\emptyset \mbox{ in } \kring \fin (X\bs T, X \bs T); 
\\
a^{X,x}_T \circ b ^{X,x}_T & = & e^{X , x}_T  \mbox{ in } \kring \fin (X, X ).
\end{eqnarray*}

These induce an isomorphism $\llproj^{X,x}_T \cong \llproj^{X\bs T,x} _\emptyset$.
\end{proposition}

\begin{proof}
Using Lemma \ref{lem:basic_i,s,f,e}, 
we have equalities 
\begin{eqnarray*}
b^{X,x}_T \circ a ^{X,x}_T &=& 
e^{X\bs T,x}_\emptyset \circ s^{X,x}_T \circ e^{X , x}_T \circ i^X_T \circ e^{X \bs T, x}_\emptyset\\
&=& 
e^{X\bs T,x}_\emptyset \circ s^{X,x}_T\circ i^X_T  \circ e^{X\bs T , x}_\emptyset  \circ e^{X \bs T, x}_\emptyset
\\
&=&
e^{X\bs T,x}_\emptyset.
\end{eqnarray*}

Likewise, we have equalities
\begin{eqnarray*}
a^{X,x}_T \circ b^{X,x}_T 
&=&
e^{X,x}_T \circ i^X_T \circ e^{X \bs T, x}_\emptyset  \circ s^{X,x}_T \circ e^{X,x}_T
\\
&=&
e^{X,x}_T  \circ e^{X, x}_T \circ i^X_T \circ s^{X,x}_T \circ e^{X,x}_T\\
&=&
e^{X,x}_T  \circ e^{X, x}_T \circ f^{X,x}_T \circ e^{X,x}_T.
\end{eqnarray*}
Since $f^{X,x}_T\circ e^{X,x}_T = e^{X,x}_T$, by Lemma \ref{lem:basic_i,s,f,e}, the second equality follows. 

Thus these induce isomorphisms
\[
\xymatrix{
\llproj^{X\bs T,x}_\emptyset 
\ar@<.75ex>[rr]^{a^{X,x}_T} 
\ar@{}[rr]|\cong 
&&
\llproj^{X,x}_T .
\ar@<.75ex>[ll]^{b^{X,x}_T}
}
\]
\end{proof}

\begin{corollary}
\label{cor:llproj_projective_generators}
For $(X,x)$ a finite pointed set, there is a direct sum decomposition:
\[
\pfin_X 
\cong 
\bigoplus_{\emptyset \subseteq Y \subseteq X \bse {x}} \llproj ^{X\bs Y,x}_\emptyset.
\]

Hence the set of objects $\{ \pfin_{\mathbf{0}} = \kring \} \amalg \{ \llproj^{\n, 1} _\emptyset \mid 0< n \in \nat \}$ is a set of projective generators of $\f (\fin)$.
\end{corollary}

\begin{proof}
The first statement follows from (\ref{eqn:decompose_Pfin_X}) by using Proposition \ref{prop:a_b}. The second statement follows by using the fact that  $\{ \pfin_\n \mid n \in \nat \}$ is a set of projective generators of $\f (\fin)$.
\end{proof}

\begin{remark}
In terms of the identification provided by Proposition \ref{prop:llproj_identify} below, this result was proved in an earlier version of this paper by a direct, ad hoc proof, using only the results of Section \ref{sect:proj}. The current proof makes clear how the result relates to M\"obius inversion, thus making the result more conceptual.
\end{remark}

Corollary \ref{cor:llproj_projective_generators} has a counterpart for the decomposition of the standard projectives in $\f (\fin\op)$ using the following projectives:

\begin{definition}
\label{defn:llop}
For $(X,x)$ a finite pointed set and $T \subseteq X \bse{x}$, define $\llop^{X,x}_T$ by 
\[
\llop^{X,x} _T := e^{X,x}_T \kring \fin (\bullet, X) \in \ob \f(\fin\op).
\]
\end{definition}

In the following, $\kring_\mathbf{0}$ is the $\kring\fin\op$-module supported on $\emptyset$ with value $\kring$.

\begin{corollary}
\label{cor:decompose_P^finop_X}
For $(X,x)$ a finite pointed set, there is a direct sum decomposition of projective objects in $\f (\fin\op)$:
\[
P^{\fin\op}_X 
\cong 
\bigoplus_{\emptyset \subseteq Y \subseteq X \bse {x}} \llop^{X \bs Y, x}_\emptyset.
\]

Hence, the set of objects $\{ P^{\fin\op}_\emptyset = \kring _\mathbf{0} \} \amalg \{ \llop ^{\n,1}_\emptyset \mid 0< n \in \nat \}$ is a set of projective generators of $\f (\fin\op)$.
\end{corollary}

\begin{remark}
For $(X,x)$ a finite pointed set and $T \subseteq X \bse{x}$, the functors $\llproj^{X,x}_T$ and $\llop^{X,x}_T$ are related by using Yoneda's lemma, as follows. Consider $\kring \fin = \kring \fin (-, -) $ as a bifunctor, i.e., an object of $\f (\fin\op\times \fin)$, then there are isomorphisms:
\begin{eqnarray*}
\llop ^{X,x}_T &\cong & \hom_{\kring \fin} (\llproj^{X,x}_T, \kring \fin) 
\\
\llproj ^{X,x}_T &\cong & \hom_{\kring \fin\op} (\llop^{X,x}_T, \kring \fin). 
\end{eqnarray*}
\end{remark}

%%%%%%%%%%%%%%%%%%%%%%%%%%%%%%%%%%%%%%%%%%%%%%%%%%%%%%%%%%%%%%%%%
\subsection{Identifying the functors $\llproj^{(X,x)}_\emptyset$ and $\llop^{(X,x)}_\emptyset$}
\label{subsect:identify_llproj_llop}

Fix $(X,x)$ a finite pointed set,  so that $X$ is non-empty. Using tensor products indexed by finite sets, there is a canonical isomorphism $\pfin_X \cong (\pfin_\mathbf{1})^{\otimes X}$; using the distinguished $x \in X$, the latter can be rewritten as $\pfin_\mathbf{1} \otimes (\pfin_\mathbf{1})^{\otimes X\bse{x}}$. 

\begin{lemma}
\label{lem:rho_surjection}
For $(X,x)$ a finite pointed set, there is a surjection
\begin{eqnarray*}
\pfin_X \cong \pfin_\mathbf{1} \otimes (\pfin_\mathbf{1})^{\otimes X\bse{x}}
& \stackrel{\rho^{X,x}}{\twoheadrightarrow} & 
\pfin_\mathbf{1} \otimes \pbar^{\otimes X\bse{x}}
\\
\ [y] \otimes \bigotimes_{j \in X \bse{x}} [y_j]
&
\mapsto 
&
[y] \otimes \bigotimes_{j \in X \bse{x}} ([y_j]-[y])
\end{eqnarray*}
where $[y_j]-[y]$ is considered as a section of $\pbar$ via Lemma \ref{lem:basis_pbar}.

For a finite set $Y$ and $\phi \in \fin (X, Y)$, $[\phi]$ lies in the kernel of $\rho^{X,x}(Y)$ if and only if $\phi^{-1} \phi (x)\supsetneq \{x \}$.
\end{lemma}

\begin{proof}
The morphism $\rho^{X,x}$ can be constructed by using Yoneda's lemma. The fact that it is surjective follows from Lemma \ref{lem:basis_pbar}.

The second statement follows from a direct calculation.
\end{proof}

\begin{proposition}
\label{prop:llproj_identify}
For $(X,x)$ a finite pointed set, the surjection $\rho^{X,x}$ induces an isomorphism 
\[
\llproj^{X,x}_\emptyset 
\cong 
\pfin_\mathbf{1} \otimes \pbar^{\otimes X\bse{x}}.
\]

In particular, for any finite set $Y$, $\llproj^{X,x}_\emptyset (Y) \subset \pfin_X (Y)$ is a free, finite rank $\kring$-module of rank $|Y| (|Y|-1) ^{|X\bse{x}|}$ with basis the set of elements $[\phi]e^{X,x}_\emptyset$, where $\phi \in \fin (X, Y)$ runs over the maps such that $\phi^{-1}\phi (x) = \{x \}$.
\end{proposition}

\begin{proof}
The first statement follows from the observation that the composite 
\[
\pfin_X \cong \pfin_\mathbf{1} \otimes (\pfin_\mathbf{1})^{\otimes X\bse{x}}
 \stackrel{\rho^{X,x}}{\twoheadrightarrow}  
\pfin_\mathbf{1} \otimes \pbar^{\otimes X\bse{x}}
\hookrightarrow 
\pfin_\mathbf{1} \otimes (\pfin_\mathbf{1})^{\otimes X\bse{x}}
\cong 
\pfin_X 
\]
(considered as an element of $\hom_{\kring \fin} (\pfin_X, \pfin_X) \cong \kring \fin (X,X)$) identifies with $e^{X,x}_\emptyset$, by inspection using the first part of  Lemma \ref{lem:rho_surjection}. The second statement then follows from the second statement of that  Lemma. 
\end{proof}

\begin{remark}
The second part of Proposition \ref{prop:llproj_identify} allows one to give an explicit description of the action of morphisms of $\fin$ on $\llproj^{X,x}_\emptyset$. (Similarly, a  description of the structure of the $\kring \fin\op$-module  $\llop^{X,x}_\emptyset$ can be derived from Proposition \ref{prop:llop_structure} below.)
\end{remark}

\begin{corollary}
\label{cor:ses_llproj}
For $(X,x)$ a finite pointed set, there is a short exact sequence 
\begin{eqnarray}
\label{eqn:ses_llproj}
0
\rightarrow 
\pbar^{\otimes X}
\rightarrow 
\llproj^{X,x}_\emptyset
\rightarrow 
\pbar^{\otimes X\bse{x}}
\rightarrow 
0
\end{eqnarray}
where, by convention, if $X \bse{x} = \emptyset$, $\pbar^{\otimes X \bse{x}}$ is taken to be $\kbar$.
\end{corollary}

\begin{proof}
By construction, one has the short exact sequence 
\[
0
\rightarrow 
\pbar 
\rightarrow 
\pfin_\mathbf{1}
\rightarrow 
\kbar
\rightarrow 
0.
\]
Upon forming the tensor product with $\pbar^{\otimes X\bse{x}}$ one obtains the required short exact sequence, by using the isomorphism of Proposition \ref{prop:llproj_identify}.
\end{proof}

One can also analyse the projective $\llop ^{X,x}_\emptyset$, considered as a subfunctor of $P^{\fin\op}_X \cong \kring \fin(\bullet, X)$.

\begin{proposition}
\label{prop:llop_structure}
For $(X,x)$ a finite pointed set and $Y$ a finite set, $\llop^{X,x}_\emptyset (Y)$ is a free, finite rank $\kring$-submodule of $\kring \fin(Y, X)$ with basis $e^{X,x}_\emptyset [\psi]$, where $\psi \in \fin (Y, X)$ ranges over the set of maps such that $\psi (Y) \cup \{x \} =X$.
\end{proposition}

\begin{proof}
One first checks that, if $\psi (Y) \cup \{x \} \subsetneq X$, then $e^{X,x}_\emptyset [\psi]=0$. Indeed, by definition, one has 
\[
e^{X,x}_\emptyset [\psi]
= 
\sum_{T \subseteq X \bse{x}} (-1)^{|T|} [f^{X,x}_T \circ \psi].
\]
By the hypothesis, there exists $w \in (X\bse{x})  \backslash \psi(Y)$ and the indexing set can be partitioned according to whether $w \in T$ or not. One checks directly that, for $Z \subseteq X \bse{x,w}$, $f^{X,x}_Z \circ \psi = f^{X,x}_{Z \amalg \{w\}} \circ \psi$, so that their contributions sum to zero in the expression for $e^{X,x}_\emptyset[\psi]$.  The claim follows.

It remains to check that the elements $e^{X,x}_\emptyset [\psi]$, where $\psi \in \fin (Y, X)$ ranges over the set of maps such that $\psi (Y) \cup \{x \} =X$, are linearly independent. 
For such a $\psi$, consider $\psi_T := f^{X,x}_T \circ \psi \in \fin (Y, X)$, for $T \subseteq X \bse{x}$. By definition of $f^{X,x}_T$, if $T \neq \emptyset$, $\exists t \in T$ such that $t \not \in \mathrm{image}f^{X,x}_T$, with $t \neq x$, so that $\psi_T (Y) \cup \{x \} \subsetneq X$. Writing 
\[
e^{X,x}_\emptyset [\psi]
= [\psi] + \sum_{T\neq \emptyset} (-1)^{|T|}[\psi_T],
\]
and using that  $\fin (Y,X)$ is a basis for $\kring \fin (Y,X)$, the result follows easily by neglecting all terms $[\phi]$ such that $\phi(Y) \cup \{x \} \subsetneq X$.
\end{proof}

\begin{corollary}
\label{cor:ses_llop}
For $(X,x)$ a finite pointed set, there is a  short exact sequence in $\f (\fin\op)$:
\[
0
\rightarrow 
\kring \fs (-, X\bse {x}) 
\rightarrow 
\llop^{X,x}_\emptyset
\rightarrow 
\kring \fs (-, X) 
\rightarrow 
0
\]
where the terms $\kring \fs (-, Z)$ (for $Z \in \{ X, X \bse{x}\}$) are equipped with the $\kring \fin \op$-module structure provided by Proposition \ref{prop:pfsop_finop-module}.

In particular, for a finite set $Y$,  $\llop^{X,x}_\emptyset (Y) =0$ if $|X|\geq |Y|+2$.
\end{corollary}

\begin{proof}
One first observes that, evaluating on a finite set $Y$, the result holds at the level of the underlying $\kring$-modules. This follows from the observation that, for $\psi \in \fin (Y, X)$ such that $\psi (Y) \cup \{x \}= X$, there are two mutually exclusive possibilities: either $\psi$ is surjective or $\psi (Y) = X \bse {x}$.

It remains to check that this corresponds to a short exact sequence of functors, as stated. This follows by checking that the map $\kring \fs (Y, X\bse {x}) \rightarrow R^{X,x}_\emptyset = e^{X,x}_\emptyset \kring \fin (Y, X)$ given by $[\psi] \mapsto e^{X,x}_\emptyset [\psi]$ defines a natural inclusion. This is straightforward.
\end{proof}

%%%%%%%%%%%%%%%%%%%%%%%%%%%%%%%%%%%%%%%%%%%%%%%%%%%%%%%%%%%%%%%%%%%%%%%%%%%%%%%%%%%%%%%%%%%%%
\subsection{Morphisms between functors of the form $\llproj_\emptyset^{X,x}$}
\label{subsect:morphisms_llproj}

The main result of this section, Theorem \ref{thm:morphisms_llproj}, gives information on morphisms between projectives of the form $\llproj_\emptyset^{X,x}$. It was proposed by the referee as a more general and more conceptual result from which the author's original Corollary \ref{cor:morphisms_pbar} can be derived.

\begin{thm}
\label{thm:morphisms_llproj}
For finite pointed sets $(X,x)$ and $(Y,y)$, there are natural isomorphisms:
\[
\hom_{\kring \fin} (\llproj^{X,x}_\emptyset, \llproj^{Y,y}_\emptyset)
\cong 
\left\{
\begin{array}{ll}
0 & |X|\geq |Y|+2 \\
\kring \fb (Y, X\bse{x}) & |X|= |Y|+1.
\end{array}
\right.
\]
\end{thm}

\begin{proof}
By Yoneda, we have  the identification 
\[
\hom_{\kring \fin} (\llproj^{X,x}_\emptyset, \llproj^{Y,y}_\emptyset)
= 
e^{X,x}_\emptyset \kring \fin (Y,X) e^{Y,y}_\emptyset
\]
and the right hand side can be rewritten as $\llop^{X,x}_\emptyset (Y) e^{Y,y}_\emptyset$, a submodule of  $\llop^{X,x}_\emptyset (Y)$.

The vanishing for $|X|\geq |Y|+2$ follows from that given in Corollary \ref{cor:ses_llop}.

When $|X|= |Y|+1$, that result gives that $\llop^{X,x}_\emptyset (Y)$ is isomorphic to $\kring \fs (Y, X\bse{x}) \cong \kring \fb (Y, X \bse{x})$ (using the cardinality hypothesis). It remains to check that this induces the isomorphism in the statement. 

As above, $\llop^{X,x}_\emptyset (Y)$ identifies with $\hom_{\kring \fin} (\llproj^{X,x}_\emptyset, \pfin _Y)$. Now, by Corollary \ref{cor:llproj_projective_generators},   there is a direct sum decomposition
\[
\pfin_Y
\cong 
\bigoplus_{\emptyset \subseteq Z \subseteq Y \bse {y}} \llproj^{Y \bs Z, y}_\emptyset.
\]
Since $\hom_{\kring \fin} (\llproj^{X,x}_\emptyset, \llproj^{Y \bs Z, y}_\emptyset)=0$ if $Z \neq \emptyset$, by the previous vanishing result together with the cardinality hypothesis, it follows that the inclusion $\llproj^{Y,y}_\emptyset \subset \pfin_Y$ induces an isomorphism 
\[
\hom_{\kring \fin} (\llproj^{X,x}_\emptyset, \llproj^{Y, y}_\emptyset)
\cong 
\hom_{\kring \fin}(\llproj^{X,x}_\emptyset, \pfin _Y)
\]
(this is  independent of $y \in Y$). The result follows.
\end{proof}

\begin{example}
\label{exam:construct_llproj_surj_mono}
Let $(X,x)$ be a finite pointed set with $|X|>1$. The inclusion $X \bse{x} \subset X$ induces the surjection $\pfin_X \twoheadrightarrow \pfin_{X\bse{x}}$. Choosing some $y \in X \bse{x}$ (this is possible, by the hypothesis on $|X|$), the argument used in the proof of Theorem \ref{thm:morphisms_llproj} shows that there is a (canonical) factorization:
\[
\xymatrix{
\llproj^{X,x}_\emptyset 
\ar@{-->}[r]
\ar@{^(->}[d]
&
\llproj^{X\bse{x}, y} 
_\emptyset 
\ar@{^(->}[d]
\\
\pfin_X 
\ar@{->>}[r]
&
\pfin_{X \bse {x}}.
}
\]
Moreover, by inspection, the dotted map factors as 
\[
\llproj^{X,x} _\emptyset 
\twoheadrightarrow 
\pbar^{\otimes X\bse {x}} 
\hookrightarrow 
\llproj^{X\bse{x}, y}_\emptyset.
\] 	
This yields an explicit construction of the maps appearing in the short exact sequence (\ref{eqn:ses_llproj}) of Corollary \ref{cor:ses_llproj}.
\end{example}

\begin{corollary}
\label{cor:morphisms_pbar}
For $s, t \in \nat$, if $s>t$ then 
$
\hom_{\kring \fin} (\pbar^{\otimes s} , \pbar^{\otimes t}) =0
$.   
 For $s=t$,  the place permutation action on $\pbar ^{\otimes t}$ induces an isomorphism  of $\kring$-algebras:
$
\kring \sym_t
\cong 
\mathrm{End}_{\kring \fin} (\pbar ^{\otimes t})$.
\end{corollary}

\begin{proof}
Using the morphisms appearing in (\ref{eqn:ses_llproj}) (as reconstructed in Example \ref{exam:construct_llproj_surj_mono}), one has the morphisms 
$
\llproj^{\mathbf{s+1}, s+1}_\emptyset \twoheadrightarrow  \pbar^{\otimes s}$ and $\pbar^{\otimes t} \hookrightarrow  \llproj^{\mathbf{t}, t}_\emptyset$.
 These induce the inclusion
\[
\hom_{\kring\fin} (\pbar^{\otimes s} , \pbar^{\otimes t}) 
\hookrightarrow 
\hom_{\kring \fin} (\llproj^{\mathbf{s+1}, s+1}_\emptyset, \llproj^{\mathbf{t}, t}_\emptyset).
\]
This immediately implies the vanishing for $s>t$, using the vanishing statement of Theorem \ref{thm:morphisms_llproj}.

The case $s=t$ follows by verifying that the composite $\kring \sym_t \hookrightarrow 
\hom_{\kring\fin} (\pbar^{\otimes t} , \pbar^{\otimes t}) 
\hookrightarrow 
\hom_{\kring \fin} (\llproj^{\mathbf{t+1}, t+1}_\emptyset, \llproj^{\mathbf{t}, t}_\emptyset)$, where the first map is given by the place permutation action, is the isomorphism given by Theorem \ref{thm:morphisms_llproj}. This is straightforward.
\end{proof}

 \begin{remark}
 \ 
 \begin{enumerate}
 \item 
 The calculation in the case $s < t$ is much more delicate. This was first carried out in \cite{MR4518761}; a direct approach is given in Part \ref{part:calculate}
  working over a field of characteristic zero (see Theorem \ref{thm:endo_pbar_otimes}).
 \item 
 Theorem \ref{thm:morphisms_llproj} and 
Corollary \ref{cor:morphisms_pbar}  
 are our  substitutes for the {\em squishing} arguments employed by Wiltshire-Gordon in \cite{2014arXiv1406.0786W}. 
\end{enumerate} 
  \end{remark}

%%%%%%%%%%%%%%%%%%%%%%%%%%%%%%%%%%%%%%%%%%%%%%%%%%%%%%%%%%%%%%%%%%%%%%%%%%%%%%%%%%%%%%%%%
\subsection{Calculating $\hom_{\kring \fin} (\pbar^{\otimes n}, \pfin_\mathbf{t})$ and applications}
\label{subsect:calculate_hom_pbar_otimes_pfin}

For $n \in \nat$, we consider $\pbar^{\otimes n}$ (using our convention that, for $n=0$, this is $\kbar$);  the symmetric group $\sym_n$ acts on $\pbar^{\otimes n}$ by place permutations. 

 Putting together Corollary \ref{cor:ses_llproj} with the identification given in Example \ref{exam:construct_llproj_surj_mono}, we have a projective presentation of $\pbar^{\otimes n}$:
\begin{eqnarray}
\label{eqn:proj_presentation_pbar_otimes_n}
\llproj^{\mathbf{n+2}, n+2} _\emptyset
\rightarrow 
\llproj^{\mathbf{n+1}, n+1} _\emptyset
\rightarrow 
\pbar^{\otimes n}
\rightarrow 
0.
\end{eqnarray}
The first map is induced by the canonical inclusion $\mathbf{n+1} \subset \mathbf{n+2}$, as in Example \ref{exam:construct_llproj_surj_mono}.

Using this presentation, we prove the following:

\begin{thm}
\label{thm:morphism_pbar_otimes_n_Pfin}
For $n \in \nat$, there is an isomorphism of $\kring \sym_n \otimes \kring \fin\op$-modules
\[
\hom_{\kring \fin}(\pbar^{\otimes n}, \pfin_\bullet) 
\cong 
\kring \fs (\bullet, \n), 
\]
where the $\kring \fin\op$-module structure of the codomain is given by Proposition \ref{prop:pfsop_finop-module} and its $\kring \sym_n$-module structure by the action of $\sym_n$ on $\n$.
\end{thm}

\begin{proof}
Applying the functor $\hom_{\kring \fin}(-, \pfin_\bullet) $ to the presentation (\ref{eqn:proj_presentation_pbar_otimes_n}) gives the exact sequence 
\[
0
\rightarrow 
\hom_{\kring \fin}(\pbar^{\otimes n}, \pfin_\bullet) 
\rightarrow 
\llop^{\mathbf{n+1},n+1} _\emptyset 
\rightarrow 
\llop^{\mathbf{n+2}, n+2}_\emptyset.
\]
Here the structure of the two right hand terms is described by Proposition \ref{prop:llop_structure}, with complements in Corollary \ref{cor:ses_llop}. The  morphism between these is induced by the inclusion $\mathbf{n+1}\subset \mathbf{n+2}$. It follows that this factors as 
\begin{eqnarray}
\label{eqn:factorization_llop}
\llop^{\mathbf{n+1},n+1} _\emptyset 
\twoheadrightarrow
\kring \fs (\bullet, \mathbf{n+1}) 
\hookrightarrow
\llop^{\mathbf{n+2}, n+2}_\emptyset,
\end{eqnarray}
where the surjection and inclusion are those appearing in the short exact sequence of Corollary \ref{cor:ses_llop} for $(\mathbf{n+1}, n+1)$ and $(\mathbf{n+2}, n+2)$ respectively. 

Hence, by the short exact sequence of that Corollary, one obtains the required isomorphism of $\kring \fin\op$-modules. One checks directly that this is $\kring \sym_n$-equivariant, as required. 
\end{proof}

The argument shows more: 

\begin{corollary}
\label{cor:ses_llproj_llop}
For $n \in \nat$, applying the functor $\hom_{\kring \fin}(-, \pfin_\bullet) $ to the short exact sequence 
\[
0
\rightarrow 
\pbar^{\otimes n+1}
\rightarrow 
\llproj^{\mathbf{n+1}, n+1}_\emptyset
\rightarrow 
\pbar^{\otimes n}
\rightarrow 
0
\]
of Corollary \ref{cor:ses_llproj}
 yields the short exact sequence 
\[
0
\rightarrow 
\kring \fs (\bullet, \mathbf{n}) 
\rightarrow 
\llop _\emptyset^{\mathbf{n+1}, n+1}
\rightarrow 
\kring \fs (\bullet, \mathbf{n+1})
\rightarrow 
0
\]
of Corollary \ref{cor:ses_llop}.
\end{corollary}

\begin{proof}
Using Theorem \ref{thm:morphism_pbar_otimes_n_Pfin} and the left exactness of $\hom_{\kring \fin}(-, \pfin_\bullet) $, the only point to establish is that the inclusion $\pbar^{\otimes n+1}
\hookrightarrow 
\llproj^{\mathbf{n+1}, n+1}_\emptyset$ induces the surjection $\llop _\emptyset^{\mathbf{n+1}, n+1}
\twoheadrightarrow 
\kring \fs (\bullet, \mathbf{n+1})$. This follows from the factorization given in (\ref{eqn:factorization_llop}).
\end{proof}

\begin{corollary}
\label{cor:model_right_augmentation}
For $n \in \nat$, the map 
\begin{eqnarray*}
\hom_{\kring \fin} (\pfin_\n, \pfin_\bullet) 
\rightarrow 
\hom_{\kring \fin} (\pbar^{\otimes n}, \pfin_\bullet)
\end{eqnarray*}
induced by the inclusion  $\pbar^{\otimes n} \subset \pfin_\n$ identifies with the right augmentation
\[
\kring \fin (\bullet, \n) \twoheadrightarrow \kring \fs (\bullet, \n)
\]
of Proposition \ref{prop:pfsop_finop-module}.
\end{corollary}

\begin{proof}
The inclusion  $\pbar^{\otimes n} \subset \pfin_\n$ factors across $\llproj_\emptyset^{\n, i} \subset \pfin_\n$ for any $i \in \n$. By Corollary \ref{cor:ses_llproj_llop}, applying $\hom_{\kring \fin} (-, \pfin_\bullet)$ yields surjections 
\[
\kring \fin (\bullet, \n) \twoheadrightarrow \llop ^{\n, i} _\emptyset (\bullet) \twoheadrightarrow \kring \fs (\bullet, \n).
\]
It remains to check that this composite coincides with the right augmentation. By Yoneda, it suffices to check the image of $[\id_\n] \in \kring \fin (\n, \n)$; this is straightforward.
\end{proof}

These results also allow us to recover information on morphisms between tensor products of $\pbar$:

\begin{corollary}
\label{cor:hom_pbar_otimes}
For $0<a  \in\nat$, and $n \in \nat$, $\hom_{\kring \fin } (\pbar^{\otimes n}, \pbar^{\otimes a})$ is isomorphic as a $\kring \sym_n \otimes \kring \sym_a\op$-module to the kernel of the total restriction map 
\[
\kring \fs (\mathbf{a}, \n) 
\rightarrow 
\bigoplus_{i=1}^a
\kring \fs (\mathbf{a}\backslash \{ i \}, \n). 
\]
(Here, for a surjection $f : \mathbf{a} \twoheadrightarrow \n$, the $i$th component of the image of $[f]$ is $[f|_{ \mathbf{a}\backslash \{ i \}}]$, understood to be zero if $f|_{ \mathbf{a}\backslash \{ i \}}$ is not surjective.)
\end{corollary}

\begin{proof}
By Proposition \ref{prop:exact_seq_pbar_otimes}, there is an  exact sequence $$
0
\rightarrow 
\pbar^{\otimes a}
\rightarrow 
\pfin_\mathbf{a}
\rightarrow 
\bigoplus_{i=1}^a
\pfin_{\mathbf{a}\backslash \{ i \}}.
$$ 
The result follows by applying $\hom_{\kring \fin} (\pbar^{\otimes n} , -)$ and using Theorem \ref{thm:morphism_pbar_otimes_n_Pfin} to identify the terms and the maps. 
\end{proof}

\begin{remark}
Corollary \ref{cor:hom_pbar_otimes} was proved using different methods in \cite{MR4518761}. 
\end{remark}

%%%%%%%%%%%%%%%%%%%%%%%%%%%%%%%%%%%%%%%%%%%%%%%%%%%%%%%%%%%%%%%%%%%%%%%%%%%%%%
 \subsection{Finiteness properties}
\label{subsect:finiteness_properties}

The above results imply finiteness properties for certain $\kring \fin$-modules. In preparation, we note the following basic property: 

\begin{lemma}
\label{lem:vanishing_fin_modules}
For a $\kring \fin$-module $F$ and positive integers $m \leq n$, if $F (\n)=0$ then $F(\m)=0$. 
\end{lemma}

\begin{proof}
The standard inclusion $\m \subset \n$ clearly admits a retract in $\fin (\n , \m)$, since $\m \neq \emptyset$ by hypothesis. Hence the $\kring$-module $F(\m)$ is a direct summand of $F (\n)$.
\end{proof}
 
\begin{remark}
The example of $\kring_\mathbf{0}$ shows that the hypothesis upon $m$ is necessary.
\end{remark}

We have the following general consequence of Corollary \ref{cor:llproj_projective_generators}.
 
\begin{proposition}
\label{prop:subfunctors_pbar_otimes_t}
For $0<t\in \nat$ and subfunctors $F \subset G \subset \pbar^{\otimes t}$, if $F(\mathbf{t+1}) \subset G(\mathbf{t+1})$ is an equality then $F =G$.
\end{proposition} 
 
\begin{proof}
We require to prove that the quotient $G/F$ is zero.  We have $\pbar^{\otimes t} (\mathbf{0})=0$, so that the hypothesis implies that  $G/F(\n)=0$ for each $n\leq t+1$, by Lemma \ref{lem:vanishing_fin_modules}.
 
Suppose that $G/F \neq 0$. By the above, we have $\hom_{\kring \fin} (\kring, G/F)=0$,  thus the set of projective generators given in Corollary \ref{cor:llproj_projective_generators} implies that there exists a pointed finite set $(X,x)$ and a non-zero morphism $\llproj^{X,x} _\emptyset \rightarrow G/F$. Since $\llproj^{X,x}_\emptyset$ is projective, this lifts to a non-zero morphism $\llproj^{X,x} _\emptyset \rightarrow G$; composing with $G \subset \pbar^{\otimes t} $ gives a non-trivial morphism $\llproj^{X,x} _\emptyset \rightarrow \pbar^{\otimes t}$. Since $\pbar^{\otimes t}$ is a subfunctor of $\llproj^{\mathbf{t}, t}_\emptyset$ (using that $t>0$), by Corollary \ref{cor:ses_llproj}, this yields a non-trivial morphism $\llproj^{X,x} _\emptyset \rightarrow \llproj^{\mathbf{t}, t}_\emptyset$. The vanishing result of Theorem \ref{thm:morphisms_llproj} therefore implies that $|X| \leq t+1$. 

Since $\llproj_\emptyset^{X,x}$ is a direct summand of $\pfin_X$, Yoneda's lemma gives that $\hom_{\kring \fin} (\llproj_\emptyset^{X,x}, G/F)$ is a direct summand of $G/F(X)$. However, the latter was shown to be  zero, which establishes a contradiction. Thus $G/F =0$, as required.
 \end{proof}
 
Recall that an object $A$ in an abelian category is simple (or irreducible) if its only subobjects are $0$ and $A$.  
 A similar argument to the above then shows the following:

\begin{proposition}
\label{prop:simples}
Suppose that $F\neq 0$ is a $\kring \fin$-module such that $F(\mathbf{0})=0$. Then there exists $t \in \nat$ and a non-trivial morphism $\pbar^{\otimes t} \rightarrow F$.
In particular, if $F$ is a simple functor, then $F$ is a quotient of $\pbar^{\otimes t}$. 

If $F(\mathbf{0})\neq 0$, there is a surjection 
$
F \twoheadrightarrow F (\mathbf{0})$ in $\f (\fin)$. 
Hence $F$ is a simple $\kring \fin$-module if and only if $F = F(\mathbf{0})$ and the latter is a simple $\kring$-module.
\end{proposition} 
 
 \begin{proof}
 For the first statement, as in the proof of Proposition \ref{prop:subfunctors_pbar_otimes_t}, since $F(\mathbf{0})=0$, there exists a finite pointed set $(X,x)$ and a non-zero morphism $\llproj^{X,x}_\emptyset \rightarrow F$. Using the short exact sequence of Corollary \ref{cor:ses_llproj}, either this restricts to a non-zero morphism $\pbar^{\otimes X}\rightarrow F$ or it factors across a non-zero morphism $\pbar^{\otimes X \bse{x}} \rightarrow F$. The conclusion in the case that $F$ is simple is immediate.

The second statement is straightforward.
 \end{proof}
 
Recall that a composition series of an object $A$ in an abelian category is a finite length filtration 
\[
0=A_0 \subset A_1 \subset \ldots \subset A_\ell =A
\]
with the property that each $A_{i+1}/A_i$ is a simple object. By the Jordan-H\"older theorem, the associated graded $\bigoplus_{i=0}^{\ell -1} A_{i+1}/ A_i$ is independent (up to isomorphism) of the choice of composition series.  The composition factors of $A$ are the simple functors $A_{i+1}/A_i$, which can occur with multiplicity greater than one.

\begin{corollary}
\label{cor:composition_series_pbar_otimes_t}
Suppose that $\kring$ is a field and that  $t$ is a positive integer. Then $\pbar^{\otimes t}$ admits a (finite length) composition series and, for each composition factor $S$ of $\pbar^{\otimes t}$:
\begin{enumerate}
\item 
$S (\mathbf{t+1})\neq 0$; 
\item 
$S$ is a quotient of $\pbar^{\otimes n}$ for some $n \leq t+1$.
\end{enumerate}
\end{corollary}

\begin{proof}
The $\kring$-vector space $\pbar^{\otimes t}(\mathbf{t+1})$ has finite dimension, so that this follows directly from the two previous Propositions and their proofs.
\end{proof}

Using Corollary \ref{cor:ses_llproj}, this implies:

\begin{corollary}
\label{cor:finite_length_indec_projectives}
For $\kring $ a field, the indecomposable projectives of $\f (\fin)$ admit (finite length) composition series. 
\end{corollary}

\begin{proof}
By Corollary \ref{cor:llproj_projective_generators}, an indecomposable projective of $\f (\fin)$ is either isomorphic to  $\pfin_\mathbf{0} = \kring$ or is a direct summand of some $\llproj^{\n,n}_\emptyset$, for $0<n \in \nat$. The functor $\kring$ has composition series exhibited by (\ref{eqn:ses_k}), using Remark \ref{rem:ses_kring}. In the second case, the result follows from Corollary \ref{cor:composition_series_pbar_otimes_t} in conjunction with the short exact sequence of Corollary \ref{cor:ses_llproj}.
\end{proof}

%%%%%%%%%%%%%%%%%%%%%%%%%%%%%%%%%%%%%%%%%%%%%%%%%%%%%%%%%%%%%%%%%%%%%%%%%%%%%
\part{The simple $\kring\fin$-modules in characteristic zero}
\label{part:zero}

The purpose of this part is to revisit the classification of the simple $\kring \fin$-modules and their projective covers,  working over a field $\kring$ of characteristic zero. In this case, the category of $\kring \sym_t$-modules is semi-simple, for all $ t\in \nat$. Isomorphism classes of simple $\kring \sym_t$-modules are indexed by the partitions $\lambda \vdash t$, with the corresponding simple module denoted $S_\lambda$. (Our convention is that $(1^t)$ indexes the sign representation $\sgn_t$.) 

\section{Projectives in characteristic zero}
\label{sect:charzero}

In this section, we study the indecomposable projectives of the category $\f (\fin)$. Apart from the projective $\pfin_\mathbf{0} \cong \kring$, these are indexed by partitions $\lambda \vdash n$ for $0<n$. The corresponding classification of the simple functors in $\f (\fin)$ is treated in Section \ref{sect:simples}.
 The case of the  partitions $(1^n)$ is exceptional, accounting for the subtleties of the theory.

Theorem \ref{thm:projectives_pbar} introduces the objects $S_\lambda (\pbar)$; these are projective for $\lambda$ not of the form $(1^n)$. For $\lambda= (1^n)$, $S_\lambda (\pbar)= \Lambda^n (\pbar)$ is not projective, where $\pfin$ is shorthand for $\pfin_{\mathbf{1}}$; its projective cover is identified as $\Lambda^{n+1}(\pfin)$ in Theorem \ref{thm:projective_cover_Lambda}, which also describes the structure of this projective. (The latter results are contained in the work of Wiltshire-Gordon \cite{2014arXiv1406.0786W}.)

\begin{remark}
Recall that Corollary \ref{cor:finite_length_indec_projectives} established that the indecomposable projective objects in $\f (\fin)$ have (finite length) composition series. 
\end{remark}

%%%%%%%%%%%%%%%%%%%%%%%%%%%%%%%%%%%%%%%%%%%%%%%%%%%%%%%%%%%%%%%%%%%%%%%%%%%%%%%%%%%%%
\subsection{A family of indecomposable projectives}

For $t$ a positive integer and $\lambda \vdash t$, consider the simple $\kring \sym_t$-module $S_\lambda$. 
 One has the associated Schur functor on $\kring$-vector spaces $V \mapsto S_\lambda (V):=  V^{\otimes t} \otimes _{\sym_t} S_\lambda$, where $\sym_t$ acts on $V^{\otimes t}$ by place permutations on the right. By post-composition,  this gives  
$S_\lambda (\pfin)$ and $S_\lambda (\pbar)$. When $\lambda = (1^t)$, these will be written as $\Lambda^t (\pfin)$ and $\Lambda^t (\pbar)$ respectively, since the Schur functor in this case identifies as the exterior power functor $\Lambda^t(-)$. 

The regular left and right actions make $\kring \sym_t$ into a $\kring \sym_t$-bimodule. This identifies as 
\begin{eqnarray}
\label{eqn:ksym_t_bimodule}
\kring \sym_t \cong \bigoplus_{\lambda \vdash t} S_\lambda \boxtimes S_\lambda ,
\end{eqnarray}
using the exterior tensor product of representations (where the second factor is considered as a $\kring \sym_t\op$-module); each $S_\lambda \boxtimes S_\lambda$ is a simple bimodule.
 Forgetting the left $\sym_t$-action, this gives the decomposition 
 $
\kring \sym_t \cong \bigoplus_{\lambda \vdash t}  S_\lambda^{\oplus \dim S_\lambda}
 $
as $\kring \sym_t\op $-modules.
 It follows that one has direct sum decompositions
\begin{eqnarray*}
\pfin_{\mathbf{t}} &\cong & \bigoplus_{\lambda \vdash t}   S_\lambda (\pfin) \boxtimes S_\lambda \cong 
\bigoplus_{\lambda \vdash t}  S_\lambda (\pfin)^{\oplus \dim S_\lambda}\\
(\pbar)^{\otimes t} &\cong & \bigoplus_{\lambda \vdash t} S_\lambda (\pbar ) \boxtimes S_\lambda
\cong 
\bigoplus_{\lambda \vdash t}  S_\lambda (\pbar)^{\oplus \dim S_\lambda}
.
\end{eqnarray*}
In each case,  the first expression gives the  $ \kring \fin \otimes \kring \sym_t\op $-module structure, whereas the second only records the underlying $\kring \fin$-module.

\begin{proposition}
\label{prop:decompose_tensor_pbar}
For $t \in \nat$, the decomposition 
$
\pbar^{\otimes t} 
\cong 
\bigoplus_{\lambda \vdash t}  S_\lambda (\pbar)^{\oplus \dim S_\lambda}
$ is the complete splitting of $\pbar^{\otimes t}$ into indecomposable objects.
 Moreover, for partitions $\lambda, \mu \vdash t$, 
\[
\hom _{\kring \fin} (S_\lambda (\pbar), S_\mu (\pbar)) \cong 
\left\{ 
\begin{array}{ll}
\kring & \lambda = \mu \\
0 & \lambda \neq \mu.
\end{array}
\right.
\]
\end{proposition}

\begin{proof}
By Corollary \ref{cor:morphisms_pbar}, the endomorphism ring of $\pbar^{\otimes t}$ in $\f (\fin)$ is $\kring \sym_t$. Let $e_\lambda$ and $e_\mu$ be idempotents in $\kring \sym_t$ so that there are isomorphisms of $\kring \sym_t$-modules $S_\lambda \cong \kring \sym_t e_\lambda$ and $S_\mu \cong \kring \sym_t e_\mu$. Thus there are isomorphisms $S_\lambda (\pbar) \cong \pbar^{\otimes t} e_\lambda$ and $S_\mu (\pbar ) \cong \pbar^{\otimes t} e_\mu$ so that one has the isomorphism of $\kring$-vector spaces 
\[
\hom _{\kring \fin} (S_\lambda (\pbar), S_\mu (\pbar)) \cong 
e_\lambda \kring \sym_t e_\mu.
\]
Using the identification of $\kring \sym_t$ as a bimodule given by (\ref{eqn:ksym_t_bimodule}), the result follows.
\end{proof}

It is immediate that $S_\lambda (\pfin)$ is projective in $\f (\fin)$, for each $\lambda \vdash t$, since it is a direct summand of $\pfin_\mathbf{t}$. 
 However, a stronger result holds for partitions not of the form $(1^t)$ (the case $\lambda = (1^t)$ is treated in the following subsection):

\begin{thm}
\label{thm:projectives_pbar}
Suppose that $\lambda \vdash t$ is not the partition $(1^t)$. Then $S_\lambda (\pbar)$ is an indecomposable projective in $\f (\fin)$. 
\end{thm}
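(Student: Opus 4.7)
Proposition \ref{prop:decompose_tensor_pbar} already shows that $S_\lambda(\pbar)$ is indecomposable for every $\lambda \vdash t$ (its endomorphism ring being $\kring$), so the task is to establish projectivity when $\lambda \neq (1^t)$. The plan is to exhibit $S_\lambda(\pbar)$ as a direct summand of a projective. The starting point is the short exact sequence
\[
0 \to \pbar^{\otimes t+1} \to \pfin \otimes \pbar^{\otimes t} \to \pbar^{\otimes t} \to 0
\]
obtained by tensoring (\ref{eqn:ses_pbar}) with $\pbar^{\otimes t}$ and using $\kbar \otimes \pbar^{\otimes t} \cong \pbar^{\otimes t}$ for $t \geq 1$. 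It is $\sym_t$-equivariant for the place-permutation action on the last $t$ tensor factors, and by Corollary \ref{cor:split} the middle term is projective. Tensoring over $\kring\sym_t$ with $S_\lambda$, which is exact in characteristic zero, yields
\[
0 \to \pbar^{\otimes t+1} \otimes_{\sym_t} S_\lambda \to \pfin \otimes S_\lambda(\pbar) \to S_\lambda(\pbar) \to 0,
\]
whose middle term, as a direct summand of the projective $\pfin \otimes \pbar^{\otimes t}$, is again projective.

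To identify the left term, note that $\pbar^{\otimes t+1}$ carries a natural $\sym_{t+1}$-action, and the $\sym_t$-action at play is its restriction along the inclusion $\sym_t \hookrightarrow \sym_{t+1}$ as the stabiliser of the first position. Combining the decomposition $\pbar^{\otimes t+1} \cong \bigoplus_{\mu \vdash t+1} S_\mu(\pbar) \boxtimes S_\mu$ from Proposition \ref{prop:decompose_tensor_pbar} with the branching rule $S_\mu|_{\sym_t} \cong \bigoplus_{\lambda' \lessdot \mu} S_{\lambda'}$ and the identity $S_\alpha \otimes_{\sym_t} S_\lambda \cong \delta_{\alpha, \lambda}\,\kring$ gives
\[
\pbar^{\otimes t+1} \otimes_{\sym_t} S_\lambda \cong \bigoplus_{\mu \gtrdot \lambda} S_\mu(\pbar),
\]
the sum taken over partitions $\mu \vdash t+1$ obtained from $\lambda$ by adding a single box. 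The key combinatorial observation is that when $\lambda \neq (1^t)$, no such $\mu$ equals $(1^{t+1})$.

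The main obstacle is to prove the resulting short exact sequence splits when $\lambda \neq (1^t)$. The cleanest route I propose is to exhibit $S_\lambda(\pbar)$ directly as a summand of the projective $S_\lambda(\pfin)$ via a natural retraction $S_\lambda(\pfin) \twoheadrightarrow S_\lambda(\pbar)$. Pointwise, Pieri's rule applied to $\pfin(X) \cong \pbar(X) \oplus \kring$ gives $S_\lambda(\pfin(X)) \cong \bigoplus_{\nu} S_\nu(\pbar(X))$ summed over $\nu$ with $\lambda/\nu$ a horizontal strip, so the retraction should project to the $\nu = \lambda$ summand; the non-trivial step is making this projection natural in $X$. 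For $\lambda = (m)$, a natural section of the complementary surjection $\mathrm{Sym}^m(\pfin) \twoheadrightarrow \mathrm{Sym}^{m-1}(\pfin)$ can be written down explicitly, the base case $m = 2$ being the linear extension of $[x] \mapsto \tfrac{1}{2}[x]^2$; iterating yields $\mathrm{Sym}^m(\pfin) \cong \bigoplus_{k=2}^{m} \mathrm{Sym}^k(\pbar) \oplus \pfin$. General $\lambda$ with $\lambda_1 \geq 2$ should reduce to variants of this construction, the point being that a row of length at least two permits such natural formulas; for $\lambda = (1^t)$ a natural section would require a natural non-zero element of $\pfin$, which does not exist. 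An alternative route is to compute $\mathrm{Ext}^1_{\kring\fin}(\pbar^{\otimes t}, \pbar^{\otimes t+1})$ as a $(\sym_t, \sym_{t+1})$-bimodule via Theorem \ref{thm:fundamental_hom_tensor_pbar} and Corollary \ref{cor:morphisms_proj_gen}, and to verify that the obstruction class sits in the sign isotypic of $\sym_t$, which again would force the $\lambda$-isotypic sequence to split precisely when $\lambda \neq (1^t)$.
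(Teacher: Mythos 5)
Your setup is correct and the branching computation leading to the short exact sequence
\[
0 \longrightarrow \bigoplus_{\substack{\mu \vdash t+1 \\ \lambda \lessdot \mu}} S_\mu(\pbar) \longrightarrow \pfin \otimes S_\lambda(\pbar) \longrightarrow S_\lambda(\pbar) \longrightarrow 0,
\]
with projective middle term, is fine. The gap is exactly where you flag it: the splitting. Neither of your two proposed routes is carried out. For the natural-retraction route, the only case you actually construct is $\lambda=(m)$, and even there only for $m=2$; the statement ``general $\lambda$ with $\lambda_1 \geq 2$ should reduce to variants of this construction'' is a hope, not a proof --- the difficulty of promoting the pointwise Pieri-type splitting of $S_\lambda(\pfin(X))$ to a natural one in $X$ is precisely the content of the theorem, so it cannot be waved away. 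For the Ext route, you would need to actually compute $\ext^1_{\kring\fin}(\pbar^{\otimes t}, \pbar^{\otimes t+1})$ as a $\kring\sym_t \otimes \kring\sym_{t+1}\op$-module and verify the sign-isotypic claim; none of the cited results yield this directly.

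The paper sidesteps this obstacle by running Pieri in the opposite direction and inducting on $t$. Given $\mu \vdash t+1$ with $\mu \neq (1^{t+1})$, one chooses $\lambda \vdash t$ with $\lambda \neq (1^t)$ and $\lambda \lessdot \mu$, so that $S_\mu(\pbar)$ is a direct summand of $S_\lambda(\pbar) \otimes \pbar$. Proposition \ref{prop:splitting_pfin_1} then applies with $P = S_\lambda(\pbar)$, which by the inductive hypothesis is a projective summand of $\pfin_{\mathbf{t}}$ with $t>0$: the sequence
\[
0 \longrightarrow S_\lambda(\pbar) \otimes \pbar \longrightarrow S_\lambda(\pbar) \otimes \pfin \longrightarrow S_\lambda(\pbar) \otimes \kbar \cong S_\lambda(\pbar) \longrightarrow 0
\]
has projective quotient and hence splits automatically, making $S_\lambda(\pbar)\otimes\pbar$, and therefore its summand $S_\mu(\pbar)$, projective. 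The essential difference from your argument is which short exact sequence one tries to split: in the paper's version the quotient is already known to be projective, whereas in yours the quotient $S_\lambda(\pbar)$ is exactly the object whose projectivity you are trying to establish, so the splitting is not free. To repair your argument you would need to complete one of your two sketches, or reorganize it into this inductive form.
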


\begin{proof}
The result is proved by induction upon $t$, starting from the initial case $t=2$, for which the only partition to consider is $\lambda = (2)$. In this case, $S_{(2)} (\pbar)$ is the second symmetric power $S^2 (\pbar)$ and one has the decomposition $\pbar^{\otimes 2} \cong \Lambda^2 (\pbar) \oplus S^2 (\pbar)$. Moreover, $S^2 (\pbar)$ is isomorphic to the invariants $(\pbar^{\otimes 2}) ^{\sym_2}$ for the place permutation action. 

To prove that $S^2 (\pbar)$ is projective, it suffices to define a suitable idempotent in $\mathrm{End}_{\kring \fin} (\pfin_\mathbf{2})$ inducing a projection onto $S^2 (\pbar)$. Now, $\mathrm{End}_{\kring \fin} (\pfin_\mathbf{2})$   identifies  as a $\kring$-vector space as $\pfin_\mathbf{2}(\mathbf{2}) $, by Yoneda. Using this isomorphism, one checks that $- \frac{1}{2} \big( ([1]-[2]) \otimes ([1 ] - [2]) \big)\in  \pfin_\mathbf{2}(\mathbf{2})$ is such an idempotent.

For the inductive step,  for a partition $\mu \vdash t+1$ such that $\mu \neq (1^{t+1})$, by Pieri's rule, there exists a partition $\lambda \vdash t$ such that $\lambda \neq (1^t)$ and $S_\mu$ is a direct summand of the induced module $S_\lambda\uparrow_{\sym_t}^{\sym_{t+1}}$. This implies that $S_\mu (\pbar)$ is a direct summand of 
$S_\lambda (\pbar ) \otimes \pbar$. By the inductive hypothesis, $S_\lambda (\pbar )$ is a projective and a direct summand of $\pfin_{\mathbf{t}}$, where $t>0$.  Proposition \ref{prop:splitting_pfin_1} thus applies to show that $S_\mu (\pbar)$ is projective and a direct summand of $\pfin_{\mathbf{t+1}}$. 

Finally, Proposition \ref{prop:decompose_tensor_pbar} implies that the endomorphism ring of $S_\lambda (\pbar) $ is $\kring$. This implies that $S_\lambda (\pbar)$ is indecomposable.
\end{proof}

\begin{remark}
\ 
\begin{enumerate}
\item 
An alternative proof of the Theorem is based on Lemma \ref{lem:lambda_pfin} below. In particular, this exhibits a surjection $\pfin \otimes \pbar \twoheadrightarrow \Lambda^2 (\pfin)$ between projective functors, using Proposition \ref{prop:splitting_pfin_1} for the projectivity of the domain. The kernel identifies with $S^2 (\pbar)$, which is thus projective, since it is a direct summand of the projective $\pfin \otimes \pbar$, by the splitting of the short exact sequence. 
 By induction, this argument generalizes using Lemma \ref{lem:lambda_pfin} to prove the Theorem (or one can  proceed as above).
\item 
This result was contained in a preliminary version of \cite{MR4518761}, where it was proved by  exploiting the homological consequences of the results of that paper. 
\end{enumerate}
\end{remark}

%%%%%%%%%%%%%%%%%%%%%%%%%%%%%%%%%%%%%%%%%%%%%%%%%%%%%%%%%%%%%%%%%%%%%%%%%%%%%%%
\subsection{The structure of $\Lambda^t (\pfin)$}
\label{subsect:structure_Lambda_pfin}

In this section we consider the projective $\Lambda^t (\pfin)$ and its subobject $\Lambda^t (\pbar)$ for $0<t \in \nat$. This is related to the analysis given by Wiltshire-Gordon \cite{2014arXiv1406.0786W} (who does not use  $\pbar$). 

\begin{conv}
\label{conv:Lambda_pbar_pfin}
The functor $\Lambda^0 (\pfin)$ is  the constant functor $\kring$,  $\Lambda^0 (\pbar)$ is the subfunctor $\kbar$, and $\Lambda^{-1} (\pbar)$ is taken to be $\kring_\mathbf{0}$.
\end{conv}

We have the following:

\begin{lemma}
\label{lem:connectivity_Lambda}
For $0< t \in \nat$, 
\begin{enumerate}
\item 
$\Lambda^t (\pfin ) (X) =0$ if $|X | < t$ and $\Lambda^t (\pfin ) (\mathbf{t}) \cong \sgn_t$ as a $\kring \sym_t$-module; 
\item 
$\Lambda^t (\pbar ) (X) =0$ if $|X | \leq  t$ and $\Lambda^t (\pbar ) (\mathbf{t+1}) \cong \sgn_{t+1}$ as a  $\kring \sym_{t+1}$-module.
\end{enumerate}
\end{lemma}

\begin{proof}
The calculation of the underlying $\kring$-vector spaces is immediate, using that $\dim \pfin (X) = |X|$ and, for $X$ a non-empty finite set, $\dim \pbar (X) = |X|-1$.

By definition, $\Lambda^t (\pfin ) (\mathbf{t}) \cong \Lambda^t( \kring \mathbf{t})$ as a $\kring \sym_t$-module, and the latter identifies with $\sgn_t$. For the case $\Lambda^t (\pbar ) (\mathbf{t+1})$, it suffices to show that, if $t>0$, the action of $\sym_{t+1}$ is non-trivial. This is checked  by considering the action of the transposition $(12)$ and using the basis of $\pbar (\mathbf{t+1})$ given by the non-zero elements of the form $[i] - [t+1]$. 
\end{proof}

\begin{proposition}
\label{prop:Lambda_pbar_simple}
For $t \in \nat$, the functor $\Lambda^t (\pbar)$ is simple. 
\end{proposition}

\begin{proof}
For $t \in \nat$, by convention, $\Lambda^0 (\pbar) = \kbar$, which is simple. Henceforth we suppose that $t>0$. By construction, $\Lambda^t (\pbar)$ is a direct summand of $\pbar^{\otimes t}$. Hence, any composition factor of $\Lambda^t (\pbar)$ is also one of $\pbar^{\otimes t}$. Now, Corollary \ref{cor:composition_series_pbar_otimes_t} implies that any such composition factor is non-zero when evaluated on $\mathbf{t+1}$. Since $\dim \Lambda^t (\pbar ) (\mathbf{t+1})= 1$ by Lemma \ref{lem:connectivity_Lambda}, it follows that there is only one such. 
\end{proof}

We record the fact that, for $0< t \in \nat$, $\Lambda^t (\pfin)$ is a direct summand of $\pfin \otimes (\pbar)^{\otimes t-1}$ (this is related to Proposition \ref{prop:refine_surject_to_pfinj}). This uses the surjection $\pfin_\mathbf{t} \twoheadrightarrow \Lambda^t (\pfin)$ that,  evaluated on a finite set $X$, is given   for $(x_i) \in X^{\times t}$ by 
\[
[x_1] \otimes \ldots \otimes [x_t]
\mapsto 
[x_1] \wedge \ldots \wedge [x_t].
\]

\begin{lemma}
\label{lem:lambda_pfin}
For $t>0$, the restriction of the surjection $\pfin_{\mathbf{t}} \twoheadrightarrow \Lambda^t (\pfin)$  to $\pfin \otimes (\pbar)^{\otimes t-1} \subset \pfin_{\mathbf{t}}$ gives a surjection 
 $ 
\pfin \otimes (\pbar)^{\otimes t-1} 
\twoheadrightarrow 
\Lambda^t (\pfin).
$ 
Hence $\Lambda^t (\pfin)$ is a direct summand of $\pfin \otimes (\pbar)^{\otimes t-1} $.
\end{lemma}

\begin{proof}
For the surjectivity, it suffices to observe that the image of 
$ 
[x_1] \otimes ([x_2] - [x_1]) \otimes  \ldots \otimes ([x_t]- [x_1])  
$ 
is $[x_1] \wedge \ldots \wedge [x_t]$. The final statement holds since $\Lambda^t (\pfin)$ is projective.
\end{proof}

We now describe the structure of $\Lambda^t (\pfin)$.

\begin{thm}
\label{thm:projective_cover_Lambda}
For $t \in \nat$, $\Lambda^t (\pfin)$ is the projective cover of the simple functor $\Lambda^{t-1} (\pbar)$ and there is a short  exact sequence:
\begin{eqnarray}
\label{eqn:ses_Lambda_pfin}
0
\rightarrow 
\Lambda^t(\pbar) 
\rightarrow 
\Lambda^t (\pfin) 
\rightarrow 
\Lambda^{t-1} (\pbar)
\rightarrow 
0,
\end{eqnarray}
where, for $t=0$, this is taken to be
 $0\rightarrow \kbar \rightarrow \kring \rightarrow \kring_{\mathbf{0}} \rightarrow 0$.
\end{thm}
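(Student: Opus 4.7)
The plan is to construct the short exact sequence (\ref{eqn:ses_Lambda_pfin}) by applying the standard linear-algebra filtration on exterior powers pointwise to the defining sequence (\ref{eqn:ses_pbar}), and then to deduce the projective cover statement from the simplicity of $\Lambda^{t-1}(\pbar)$ combined with a short Yoneda computation showing that the resulting extension does not split.

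For a non-empty finite set $X$, evaluating (\ref{eqn:ses_pbar}) on $X$ gives the linear short exact sequence $0 \to \pbar(X) \to \kring[X] \to \kring \to 0$, whose cokernel is one-dimensional. The associated Koszul-type filtration on $\Lambda^t(\kring[X])$ then yields the natural linear short exact sequence
\[
0 \to \Lambda^t(\pbar(X)) \to \Lambda^t(\kring[X]) \to \Lambda^{t-1}(\pbar(X)) \to 0,
\]
where the identification of the top quotient with $\Lambda^{t-1}(\pbar(X))$ uses that the cokernel is one-dimensional (so only the $q \in \{0,1\}$ strata of the graded of exterior powers contribute). Because this construction depends only on the input sequence, it is functorial in $X$, and so assembles into the claimed short exact sequence of $\kring\fin$-modules for $t \geq 1$; the case $t=0$ coincides with (\ref{eqn:ses_k}) via Convention \ref{conv:Lambda_pbar_pfin}, and on $X=\emptyset$ the statement is vacuous for $t \geq 1$ since all three terms vanish there.

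For the projective cover claim, $\Lambda^t(\pfin) = S_{(1^t)}(\pfin)$ is a direct summand of $\pfin_{\mathbf{t}}$ (via the $\sgn_t$-isotypic projection for the place-permutation action) and is therefore projective, while $\Lambda^{t-1}(\pbar)$ is simple by Proposition \ref{prop:Lambda_pbar_simple} (or equals the known simple $\kring_\mathbf{0}$ when $t=0$). It remains to show that the surjection $\Lambda^t(\pfin) \twoheadrightarrow \Lambda^{t-1}(\pbar)$ is essential, which, given that the extension has length two, is equivalent to showing the short exact sequence does not split, equivalently that $\hom_{\kring\fin}(\Lambda^t(\pfin), \Lambda^t(\pbar)) = 0$. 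Using $\Lambda^t(\pfin) \cong \pfin^{\otimes t} \otimes_{\sym_t} \sgn_t$, Yoneda gives the identification $\hom_{\kring\fin}(\Lambda^t(\pfin), F) \cong \hom_{\sym_t}(\sgn_t, F(\mathbf{t}))$; evaluated on $F = \Lambda^t(\pbar)$ this vanishes because $\pbar(\mathbf{t})$ has dimension $t-1$ by Lemma \ref{lem:basis_pbar}, so $\Lambda^t(\pbar)(\mathbf{t}) = \Lambda^t(\pbar(\mathbf{t})) = 0$ (the case $t=0$ reducing to $\kbar(\emptyset)=0$). Hence the extension is non-split, so $\Lambda^t(\pbar)$ is the unique maximal submodule and $\Lambda^{t-1}(\pbar)$ the unique simple quotient of $\Lambda^t(\pfin)$, which identifies $\Lambda^t(\pfin)$ as the projective cover of $\Lambda^{t-1}(\pbar)$.

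The chief obstacle is really only bookkeeping around the edge cases $t \in \{-1,0,1\}$ and $X = \emptyset$, where the conventions $\Lambda^0(\pbar) = \kbar$ and $\Lambda^{-1}(\pbar) = \kring_\mathbf{0}$ are specifically chosen to patch the pointwise linear-algebra construction into a uniform statement over $\nat$. Once those conventions are aligned with the pointwise sequence, the argument is uniform in $t$ and the decisive non-splitting input collapses to the elementary fact that the $t$-th exterior power of a $(t-1)$-dimensional vector space vanishes.
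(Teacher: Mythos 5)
Your proof is correct and takes essentially the same route as the paper: the short exact sequence is obtained by applying $\Lambda^t$ to the two-step filtration of $\pfin$ (the paper phrases the linear-algebra step via $\Lambda^t(V\oplus\kring)\cong\Lambda^t(V)\oplus\Lambda^{t-1}(V)$, you phrase it via the Koszul filtration — same content), and the projective-cover claim reduces to a Yoneda computation at $\mathbf{t}$. The only small variant is in that last step: the paper proves indecomposability by computing $\mathrm{End}_{\kring\fin}(\Lambda^t(\pfin))\cong\kring$ via the surjection from $\pfin_{\mathbf{t}}$ and the fact that $\Lambda^t(\pfin)(\mathbf{t})\cong\kring$, whereas you prove the sequence does not split by showing $\hom_{\kring\fin}(\Lambda^t(\pfin),\Lambda^t(\pbar))=0$ using $\Lambda^t(\pbar)(\mathbf{t})=0$; both rest on the same evaluations from Lemma \ref{lem:connectivity_Lambda}.
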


\begin{proof}
By definition, one has the short exact sequence $0 \rightarrow \pbar \rightarrow \pfin \rightarrow \kbar \rightarrow 0$, which can be considered as  a two-stage filtration of $\pfin$, corresponding to a composition series. For $t>0$, on applying $\Lambda^t (-)$ this yields the short exact sequence of the statement. (At the level of vector spaces, this can be understood using the isomorphism $\Lambda^t( V \oplus \kring) \cong \Lambda^t (V) \oplus \Lambda^{t-1} (V)$.) The case $t=0$ is taken to be the given sequence, by convention.

To show that this exhibits $\Lambda^t (\pfin)$ as the projective cover of the simple functor $\Lambda^{t-1} (\pbar)$, it suffices to show that $\Lambda^t (\pfin)$ is indecomposable
. For $t=0$, using Convention \ref{conv:Lambda_pbar_pfin}, this is Lemma \ref{lem:non-split}, hence we suppose that $t>0$.  It suffices to show that $\mathrm{End}_{\kring \fin} (\Lambda^t (\pfin)) = \kring$.
 Using the canonical  surjection $\pfin_{\mathbf{t}}  \twoheadrightarrow \Lambda^t (\pfin)$, this follows from the equality 
$\hom_{\kring \fin} (\pfin_{\mathbf{t}}, \Lambda^t (\pfin)) =\kring$ given by Yoneda's lemma and the fact that that $\Lambda^t (\kring [\mathbf{t}] ) \cong \kring$, by Lemma \ref{lem:connectivity_Lambda}. 
\end{proof}

The short exact sequences (\ref{eqn:ses_Lambda_pfin}) splice to  yield the following (a related result occurs in \cite{2014arXiv1406.0786W}):

\begin{proposition}
\label{prop:Lambda-complex}
There is an exact complex: 
\[
\ldots  \rightarrow  \Lambda ^t (\pfin) \rightarrow \Lambda^{t-1} (\pfin)
\rightarrow 
\ldots \rightarrow
\Lambda ^{1} (\pfin) \rightarrow  \Lambda ^0 (\pfin) = \kring \rightarrow \kring_{\mathbf{0}} \rightarrow 0.
\]
The  complex $\Lambda^{t \geq 0}(\pfin)$ is a minimal projective resolution of $\kring_\mathbf{0}$. 
\end{proposition}

\begin{remark}
The minimal projective resolution of $\kring_\mathbf{0}$ given by  Proposition \ref{prop:Lambda-complex} is a direct summand of the projective resolution of $\kring_\mathbf{0}$ constructed from  (\ref{eqn:proj_resolution_kbar_general}).
\end{remark}

\begin{remark}
For $t$ a positive integer, 
Wiltshire-Gordon \cite{2014arXiv1406.0786W} defines  $D_t$ to be the cokernel of the map $\Lambda^{t+1}(\pfin)  \rightarrow \Lambda^t (\pfin)$ appearing in Proposition \ref{prop:Lambda-complex}. By the above identifications, one has $D_t \cong \Lambda^{t-1} (\pbar)$. 
\end{remark}

%%%%%%%%%%%%%%%%%%%%%%%%%%%%%%%%%%%%%%%%%%%%%%%%%%%%%%%%%%%%%%%%%%%%%%%%%%%%%%%%%%%%%%%%%%%%%%%%%%%%%ù
\subsection{Classification of the indecomposable projectives in $\f (\fin)$}

Putting the above results together, one has:

\begin{thm}
\label{thm:indecomposable_projectives}
Suppose that $Q$ is an indecomposable projective in $\f (\fin)$, then exactly one of the following holds
\begin{enumerate}
\item 
there exists a unique  $n \in \nat$ such that $Q \cong \Lambda^n (\pfin)$; 
\item 
there exists a unique   partition $ (\lambda \neq (1^n))  \vdash n$ for $0<n \in \nat$ such that  $Q \cong S_\lambda (\pbar)$.  
\end{enumerate}
\end{thm}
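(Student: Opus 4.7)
My plan is to leverage the projective generators from Corollary \ref{cor:projective_generators}: any indecomposable projective $Q$ is, by projectivity, a direct summand of a direct sum of copies of $\kring = \pfin_\mathbf{0}$ and $\pfin \otimes \pbar^{\otimes n}$, and the Krull--Schmidt property (which applies since each such generator has finite length by Corollary \ref{cor:finite_length}) forces $Q$ to be isomorphic to an indecomposable summand of one of them. The generator $\kring = \Lambda^0(\pfin)$ is itself indecomposable (Theorem \ref{thm:projective_cover_Lambda}), accounting for $n = 0$ in family (1); the rest of the work is to decompose $\pfin \otimes \pbar^{\otimes n}$ for $n \in \nat$ into summands drawn from the list.

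I will apply the Schur decomposition $\pbar^{\otimes n} \cong \bigoplus_{\lambda \vdash n} S_\lambda(\pbar)^{\oplus \dim S_\lambda}$ of Proposition \ref{prop:decompose_tensor_pbar} and analyze each $\pfin \otimes S_\lambda(\pbar)$ via the sequence
\[
0 \to \pbar \otimes S_\lambda(\pbar) \to \pfin \otimes S_\lambda(\pbar) \to S_\lambda(\pbar) \to 0
\]
obtained by tensoring (\ref{eqn:ses_pbar}) with $S_\lambda(\pbar)$, using $\kbar \otimes S_\lambda(\pbar) \cong S_\lambda(\pbar)$ (valid since $S_\lambda(\pbar)$ vanishes on $\mathbf{0}$ when $n \geq 1$, and trivially when $n = 0$). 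When $\lambda \neq (1^n)$, $S_\lambda(\pbar)$ is projective by Theorem \ref{thm:projectives_pbar} so the sequence splits, and Pieri's rule identifies $\pbar \otimes S_\lambda(\pbar) \cong \bigoplus_\mu S_\mu(\pbar)$ over the partitions $\mu$ of $n+1$ obtained from $\lambda$ by adding a single box. The hypothesis $\lambda \neq (1^n)$ prevents $\mu = (1^{n+1})$ from arising, so every summand produced lies in family (2).

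The principal obstacle is the case $\lambda = (1^n)$ with $n \geq 1$: here $\Lambda^n(\pbar) = S_{(1^n)}(\pbar)$ is simple (Proposition \ref{prop:Lambda_pbar_simple}) but not projective, and the sequence above does not split. Pieri gives $\pbar \otimes \Lambda^n(\pbar) \cong S_{(2,1^{n-1})}(\pbar) \oplus \Lambda^{n+1}(\pbar)$; the projective summand $S_{(2,1^{n-1})}(\pbar)$ splits off, leaving a projective $Q'$ fitting into $0 \to \Lambda^{n+1}(\pbar) \to Q' \to \Lambda^n(\pbar) \to 0$. To identify $Q' \cong \Lambda^{n+1}(\pfin)$, I will exploit that Theorem \ref{thm:projective_cover_Lambda} exhibits $\Lambda^{n+1}(\pfin) \twoheadrightarrow \Lambda^n(\pbar)$ as a projective cover: projectivity of $Q'$ lifts $Q' \twoheadrightarrow \Lambda^n(\pbar)$ to a map $\phi \colon Q' \to \Lambda^{n+1}(\pfin)$ that must be surjective by essentiality of the cover; then $\phi$ splits, writing $Q' \cong \Lambda^{n+1}(\pfin) \oplus R$ with $R = \ker \phi$ contained in $\Lambda^{n+1}(\pbar)$, which is simple and non-projective by Theorem \ref{thm:projectives_pbar}, forcing $R = 0$. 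The degenerate case $n = 0$ reduces directly to $\pfin = \Lambda^1(\pfin)$ via the defining sequence (\ref{eqn:ses_pbar}).

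For uniqueness, distinct $\Lambda^n(\pfin)$ are separated by their minimal nonzero supports (Lemma \ref{lem:connectivity_Lambda}), distinct $S_\lambda(\pbar)$ with $\lambda \vdash n$ by Proposition \ref{prop:decompose_tensor_pbar}, and those with different $n$ by support. To rule out a crossover $\Lambda^n(\pfin) \cong S_\lambda(\pbar)$ with $\lambda \vdash k$, $\lambda \neq (1^k)$, the case $n = 0$ is immediate from $\kring(\mathbf{0}) \neq 0 = S_\lambda(\pbar)(\mathbf{0})$, and for $n \geq 1$ I will apply Theorem \ref{thm:fundamental_hom_tensor_pbar} twice: composing the surjection $\Lambda^n(\pfin) \twoheadrightarrow \Lambda^{n-1}(\pbar) \hookrightarrow \pbar^{\otimes n-1}$ with the summand projection $\pbar^{\otimes k} \twoheadrightarrow S_\lambda(\pbar)$ forces $k \leq n-1$, whereas composing $\Lambda^n(\pbar) \hookrightarrow \Lambda^n(\pfin) \cong S_\lambda(\pbar) \hookrightarrow \pbar^{\otimes k}$ with the summand projection $\pbar^{\otimes n} \twoheadrightarrow \Lambda^n(\pbar)$ forces $n \leq k$, a contradiction.
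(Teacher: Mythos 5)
Your approach is close in spirit to the paper's (both decompose the projective generators $\pfin \otimes \pbar^{\otimes n}$ into indecomposables and appeal to Krull--Schmidt via Corollary \ref{cor:finite_length}), but you organize the decomposition summand-by-summand using Pieri's rule applied to $\pfin \otimes S_\lambda(\pbar)$, whereas the paper decomposes $\pfin \otimes \pbar^{\otimes n}$ all at once as $\Lambda^{n+1}(\pfin) \oplus (\pbar^{\otimes n+1}/\Lambda^{n+1}(\pbar)) \oplus (\pbar^{\otimes n}/\Lambda^n(\pbar))$ via Lemma \ref{lem:lambda_pfin} and then disposes of the third summand by choosing $n$ maximal. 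Your version is more explicit about where each indecomposable comes from; the paper's maximality trick is slicker but less transparent. Your treatment of uniqueness and the crossover exclusion is fine (though ``by support'' for distinguishing $S_\lambda(\pbar)$ with $\lambda \vdash n$ from $S_\mu(\pbar)$ with $\mu \vdash m$, $n \neq m$, is not quite right --- e.g.\ $S_{(2)}(\pbar)$ and $S_{(3)}(\pbar)$ have the same support --- but Theorem \ref{thm:fundamental_hom_tensor_pbar} gives the vanishing $\hom(S_\lambda(\pbar), S_\mu(\pbar)) = 0$ for $n > m$ directly).

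There is one genuine gap: in the case $\lambda = (1^n)$, you assert that ``the projective summand $S_{(2,1^{n-1})}(\pbar)$ splits off'' from $\pfin \otimes \Lambda^n(\pbar)$, leaving $Q'$. But $S_{(2,1^{n-1})}(\pbar)$ is a projective \emph{sub}module of the projective $\pfin \otimes \Lambda^n(\pbar)$, and a projective submodule of a finite-length module is not automatically a direct summand (projectivity lets you lift surjections, not construct retractions). The splitting is true, but requires an argument. For instance: since $\hom_{\kring\fin}(\Lambda^{n+1}(\pbar), S_{(2,1^{n-1})}(\pbar)) = 0$ by Proposition \ref{prop:decompose_tensor_pbar} (they are distinct isotypical summands of $\pbar^{\otimes n+1}$), the long exact sequence for $\hom(-, S_{(2,1^{n-1})}(\pbar))$ applied to the projective cover sequence $0 \to \Lambda^{n+1}(\pbar) \to \Lambda^{n+1}(\pfin) \to \Lambda^n(\pbar) \to 0$ gives $\ext^1_{\kring\fin}(\Lambda^n(\pbar), S_{(2,1^{n-1})}(\pbar)) = 0$; hence the quotient $(\pfin \otimes \Lambda^n(\pbar))/\Lambda^{n+1}(\pbar)$ splits as $S_{(2,1^{n-1})}(\pbar) \oplus \Lambda^n(\pbar)$, and composing with the projection to $S_{(2,1^{n-1})}(\pbar)$ yields a surjection from $\pfin \otimes \Lambda^n(\pbar)$ onto the projective $S_{(2,1^{n-1})}(\pbar)$, which does split, producing your $Q'$. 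An alternative that bypasses the issue entirely: run your projective-cover argument directly on $\pfin \otimes \Lambda^n(\pbar)$ rather than on the hypothetical $Q'$ --- split off $\Lambda^{n+1}(\pfin)$ first, then show the complementary summand must be $S_{(2,1^{n-1})}(\pbar)$ by a comparison of composition factors and a second appeal to Theorem \ref{thm:fundamental_hom_tensor_pbar}. The rest of your argument (the lift-and-split to identify $Q' \cong \Lambda^{n+1}(\pfin)$, and the exclusion of $\ker\phi$ being the non-projective simple $\Lambda^{n+1}(\pbar)$) is correct as written.
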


\begin{proof}
For $n \in \nat$, Theorem \ref{thm:projectives_pbar} gives that $S_\lambda (\pbar)$ is an indecomposable projective for $\lambda \neq (1^n)$ and Theorem \ref{thm:pfin_Lambda} that $\Lambda^n (\pfin)$ is an indecomposable projective. One checks using Proposition \ref{prop:decompose_tensor_pbar} that these are pairwise non-isomorphic, in the sense of the 
uniqueness in the statement. It remains to show that these exhaust the isomorphism classes of indecomposable projectives.

Suppose that $Q$ is an indecomposable projective. By Corollary \ref{cor:llproj_projective_generators}, either $Q = \kring$ or there exists $n \in \nat$ such that $Q$ is a quotient of the projective $\pfin \otimes \pbar^{\otimes n}$. Consider the second case,  for which we may suppose that $n$ is maximal. Using Lemma \ref{lem:lambda_pfin}, one can show that $\pfin \otimes \pbar^{\otimes n}$ decomposes as
\[
\Lambda^{n+1} (\pfin) \oplus (\pbar^{\otimes n+1} / \Lambda ^{n+1} (\pbar) ) \oplus (\pbar^{\otimes n} / \Lambda ^{n} (\pbar) ).
\] 
The maximality of $n$ implies that either $Q$ is isomorphic to $\Lambda^{n+1} (\pfin) $ or $Q$ is a quotient of $\pbar^{\otimes n} / \Lambda ^{n} (\pbar) $ (these possibilities are mutually exclusive). In the second case, we must have $n>0$, since $\pbar^{\otimes n} / \Lambda ^{n} (\pbar) $ is zero for $n=0$; then,  using the splitting of $\pbar^{\otimes n} / \Lambda ^{n} (\pbar)$ into isotypical components,  $Q$ must be isomorphic to $S_\lambda (\pbar)$, for a unique $\lambda \vdash n$ such that $\lambda \neq (1^{n})$. 
\end{proof}

\begin{remark}
This theorem provides an abstract classification of the simple functors in $\f (\fin)$.  Such a  classification  was given by Wiltshire-Gordon in \cite{2014arXiv1406.0786W}, using a slightly different method. The simple functors are made explicit in Section \ref{sect:simples} below.
\end{remark}

\section{The simple $\kring\fin$-modules}
\label{sect:simples}

In this section, the simple objects of $\f (\fin)$ (aka. $\kring \fin$-modules) are made explicit.  These are related to the structure of $\kring \finj (-,-)$, thus relating to results of Feigin and Shoikhet \cite{MR2350124}. 

%%%%%%%%%%%%%%%%%%%%%%%%%%%%%%%%%%%%%%%%%%%%%%%%%%%%%%%%%%%%%%%%%%%%%%%%%%
\subsection{The  simple functors}

For $n \in \nat$,  the surjection $\pfin_\n \twoheadrightarrow \kring \finj (\n, -)$ of Proposition \ref{prop:pfinj_fin-module} restricts to a morphism that is $\sym_n\op$-equivariant  
\begin{eqnarray}
\label{eqn:pbar_tensor_pfinj}
\pbar^{\otimes n}
\rightarrow 
\kring \finj (\n, -).
\end{eqnarray}
This is neither injective nor surjective in general. The failure of surjectivity is as small as it could possibly be; it is controlled by the following Proposition:

\begin{proposition}
\label{prop:almost_surjectivity}
For $n\in \nat$, the morphism (\ref{eqn:pbar_tensor_pfinj}) fits into the exact sequence of $\kring \fin \boxtimes \kring \sym_n\op$-modules
\[
\pbar^{\otimes n}
\rightarrow 
\kring \finj (\n, -) 
\rightarrow 
\Lambda^{n-1} (\pbar) \boxtimes \sgn_n 
\rightarrow 
0,
\]
where $\Lambda^{-1} (\pbar) = \kring_\mathbf{0}$ and $\Lambda^0(\pbar) = \kbar$, by convention. 
\end{proposition}

\begin{proof}
For $n=0$, $\pbar^{\otimes 0}$ is $\kbar$ and $\Lambda^{-1} (\pbar) = \kring_\mathbf{0}$, by convention. The exact sequence in this case corresponds to the  short exact sequence (\ref{eqn:ses_k}). 

Consider the case $n>0$; Proposition \ref{prop:refine_surject_to_pfinj}  exhibits $\kring \finj (\n, -)$ as a quotient of $\pfin \otimes \pbar^{\otimes n-1}$. As in the proof of Theorem \ref{thm:indecomposable_projectives},  the latter decomposes as $\Lambda^n(\pfin) \oplus (\pbar^{\otimes n}/ \Lambda^n (\pbar)) \oplus (\pbar^{\otimes n-1} / \Lambda^{n-1}(\pbar))$, in which each summand is projective. Moreover, $\pbar^{\otimes n-1} / \Lambda^{n-1}(\pbar)$ is a direct summand of $\pfin_{\mathbf{n-1}}$. Since $\kring \finj (\n, -) $ is zero when evaluated on $\mathbf{n-1}$, Yoneda's lemma implies that there are no non-trivial maps from $\pbar^{\otimes n-1} / \Lambda^{n-1}(\pbar)$ to $\kring \finj (\n, -) $. Hence the surjection of Proposition \ref{prop:refine_surject_to_pfinj}  restricts to   a surjection 
\[
\Lambda^n(\pfin) \oplus (\pbar^{\otimes n}/ \Lambda^n (\pbar))
\twoheadrightarrow 
\kring \finj (\n, -) 
\]
with the property that the restriction to $\pbar^{\otimes n} \subset\Lambda^n(\pfin) \oplus (\pbar^{\otimes n}/ \Lambda^n (\pbar)) $ is  the morphism (\ref{eqn:pbar_tensor_pfinj}).

To conclude, it suffices to check that this restriction is not surjective. This is seen by evaluating on $\mathbf{n}$, as follows. We claim that the map $\pbar^{\otimes n} (\n) \rightarrow \kring \sym_n$ maps to the kernel of the sign representation $\sgn \colon \kring \sym_n \rightarrow \kring$. 

If $n=1$, the result is immediate, since $\pbar(\mathbf{1})=0$; for $n>1$, $\pbar(\n)$ has dimension $n-1$, with basis $\{ [i] - [n] \mid 1 \leq i \leq n-1 \}$, by Lemma \ref{lem:basis_pbar}. Thus $\pbar^{\otimes n}(\n)$ has basis given by elements of the form $\bigotimes_{j=1}^n ([i_j] - [n])$, where $i_* : \n \rightarrow \mathbf{n-1}$.
It is straightforward to check that, if $i_*$ is not surjective, then the associated basis element maps to zero. Hence suppose that $i_*$ is surjective; thus there exist $j_1 < j_2$ such that  $i_{j_1}= i_{j_2}$. To illustrate the argument, consider the case $n=2$, so that $j_1=1$ and $j_2=2$ and one has the (unique) basis element $([1]-[2]) \otimes ([1]-[2])$; this maps to $-[\id] - [\tau] \in \kring \sym_2$, which lies in the kernel of the sign representation. The general case follows by extending this analysis.
\end{proof}

In the following, $\kring \finj( \n, -)$ is considered as a $\kring \fin$-module via Proposition \ref{prop:pfinj_fin-module}.

\begin{definition}
\label{defn:identify_C_lambda}
For $0< n \in \nat$ and a partition $\lambda \vdash n$, define 
\[
C_\lambda := 
\left\{
\begin{array}{ll}
\kring \finj (\n, - ) \otimes_{\sym_n} S_\lambda & \lambda \neq (1^n) \\
\Lambda^n (\pbar) & \lambda = 1^n.
\end{array}
\right.
\]
For $n=0$, take $C_{(0)}:= \kbar$.
\end{definition}

Proposition \ref{prop:almost_surjectivity} gives:

\begin{corollary}
\label{cor:C_lambda_quotient}
For $0< n \in \nat$ and a partition $\lambda \vdash n$, the morphism (\ref{eqn:pbar_tensor_pfinj}) induces a surjection 
\[
S_\lambda (\pbar ) \twoheadrightarrow C_\lambda.
\]
In particular, $C_\lambda$ is a quotient of $\pbar^{\otimes n}$.
\end{corollary}

For some purposes it is useful to reindex the functors corresponding to partitions of the form $(1^t)$ as follows:

\begin{definition}
\label{defn:identify_Ctilde_lambda}
For $n \in \nat$ and a partition $\lambda \vdash n$, define 
\[
\tilde{C}_\lambda := 
\left\{
\begin{array}{ll}
C_\lambda  & \lambda \neq (1^n) \\
\Lambda^{n-1} (\pbar) & \lambda = 1^n,
\end{array}
\right.
\]
where $\Lambda^{n-1} (\pbar)$ is understood to be $\kring_\mathbf{0}$ for $n=0$ and $\kbar$ for $n=1$.
\end{definition}

By restriction along $\kring \fb \hookrightarrow \kring \fin$,  any $\kring \fin$-module  $F$ has an underlying $\kring \fb$-module structure; in particular, for $t \in \nat$, $F(\mathbf{t})$ is a $\kring \sym_t$-module. The following follows directly from the definition together with Lemma \ref{lem:connectivity_Lambda}.

\begin{lemma}
\label{lem:evaluate_C_lambda}
For $n \in \nat$ and a partition $\lambda \vdash n$, for $t \leq n $ there are isomorphisms of $\sym_t$-modules:
\[
\tilde{C}_\lambda (\mathbf{t}) = 
\left\{
\begin{array}{ll}
0 & t < n \\
S_\lambda & t = n.
\end{array}
\right.
\]
\end{lemma}

The classification of the simple functors is most cleanly stated using the family $\{ \tilde{C}_\lambda \mid \lambda \vdash n\}$:

\begin{thm}
\label{thm:classify_simples}
For $n \in \nat$ and $\lambda \vdash n$, the $\kring\fin$-module $\tilde{C}_\lambda$ is simple. Moreover, 
\begin{enumerate}
\item 
if $\lambda \neq (1^n)$, $S_\lambda (\pbar) \rightarrow \tilde{C}_\lambda$ is the projective cover of $\tilde{C}_\lambda = C_\lambda $; 
\item 
$\Lambda^n (\pfin) \twoheadrightarrow \Lambda^{n-1}(\pbar) = \tilde{C}_{(1^n)}$ is the projective cover of $\tilde{C}_{(1^n)}$.
\end{enumerate}

For $F$  a simple $\kring\fin$-module, such that $F(\mathbf{t})=0$ for $t <n $  and  $F(\n)\neq 0$,
  $F(\n)$ is a simple $\sym_n$-module and 
$
F \cong \tilde{C}_\lambda
$
 where $F(\n) \cong S_\lambda$. (If $F(\mathbf{0})\neq 0$, this corresponds to $F \cong \kring_\mathbf{0}$.)
\end{thm}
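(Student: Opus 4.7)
The plan is to establish the three claims in order: simplicity of $\tilde{C}_\lambda$, identification of projective covers, and the classification of simples. For $\lambda = (1^n)$ the simplicity of $\tilde{C}_{(1^n)} = \Lambda^{n-1}(\pbar)$ is already Proposition \ref{prop:Lambda_pbar_simple} (with the boundary cases $n \in \{0,1\}$ covered by the conventions giving $\kring_\mathbf{0}$ and $\kbar$), and the projective cover statement $\Lambda^n(\pfin) \twoheadrightarrow \Lambda^{n-1}(\pbar)$ is exactly Theorem \ref{thm:projective_cover_Lambda}; so the remainder of the proof concerns $\lambda \neq (1^n)$.

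For $\lambda \neq (1^n)$, the heart of the argument is to show that $C_\lambda = \tilde{C}_\lambda$ is simple. Since $C_\lambda = \kring\finj(\n,-) \otimes_{\sym_n} S_\lambda$ is visibly generated as a $\kring\finj$-module (hence as a $\kring\fin$-module) by $C_\lambda(\n) = S_\lambda$, and $S_\lambda$ is a simple $\sym_n$-module, it is enough to show that any non-zero sub-$\kring\fin$-module $M \subset C_\lambda$ satisfies $M(\n) \neq 0$. Lemma \ref{lem:evaluate_C_lambda} gives $C_\lambda(\mathbf{t}) = 0$ for $t < n$, so if $M(\n) = 0$ then the minimal $m$ with $M(\mathbf{m}) \neq 0$ satisfies $m > n$. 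For a non-zero $x \in M(\mathbf{m})$ and any self-map $g : \mathbf{m} \to \mathbf{m}$ whose image is contained in an $n$-subset $A_0$, the map $g$ factors through $\n$, so $g_*(x)$ lies in the image of $M(\n) = 0$ and hence vanishes. Writing $C_\lambda(\mathbf{m}) = \bigoplus_A S_\lambda[A]$ over the $n$-subsets $A \subset \mathbf{m}$ and $x = \sum_A s_A$, I would derive $x = 0$ by choosing such $g$ to be the identity on a fixed $A_0$ and collapse $\mathbf{m} \setminus A_0$ onto a single point of $A_0$; the vanishing $g_*(x) = 0$ then yields a relation in $S_\lambda[A_0]$.

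The main obstacle is the interference from perturbed subsets of the form $(A_0 \setminus \{b\}) \cup \{y\}$ (with $b \in A_0$ determined by the collapse target and $y \in \mathbf{m} \setminus A_0$) whose restriction under $g$ also remains injective, so a careful cancellation argument, varying the collapse target and exploiting the fact that $m > n$ provides the necessary freedom, is required to isolate each $s_{A_0}$. Once $C_\lambda$ is known to be simple, the projective cover assertion (1) follows formally: $S_\lambda(\pbar)$ is an indecomposable projective by Theorem \ref{thm:projectives_pbar} and so has a unique simple quotient, while the non-zero surjection $S_\lambda(\pbar) \twoheadrightarrow C_\lambda$ of Corollary \ref{cor:C_lambda_quotient} lands in the simple $C_\lambda$, identifying $S_\lambda(\pbar) \twoheadrightarrow C_\lambda$ with the projective cover.

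For the classification, any simple $\kring\fin$-module $F$ is the unique simple quotient of its projective cover; by Theorem \ref{thm:indecomposable_projectives} the latter is either $\Lambda^m(\pfin)$ or $S_\mu(\pbar)$ for some $\mu \vdash m$ with $\mu \neq (1^m)$, and the first two parts already identify the corresponding simple quotient as $\tilde{C}_{(1^m)}$ or $\tilde{C}_\mu$ respectively. Lemmas \ref{lem:connectivity_Lambda} and \ref{lem:evaluate_C_lambda} then show $F(\mathbf{t}) = 0$ for $t < m$ with $F(\mathbf{m})$ a simple $\sym_m$-module, forcing $m = n$ and determining $\lambda$ by $F(\n) \cong S_\lambda$. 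The edge case $F(\mathbf{0}) \neq 0$ corresponds to $n = 0$, giving $F \cong \kring_\mathbf{0} = \tilde{C}_{(1^0)}$.
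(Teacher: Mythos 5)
Your treatment of $\lambda=(1^n)$, of the projective cover assertion for $\lambda\neq(1^n)$ once simplicity is granted, and of the final classification all track the paper closely and are correct. The genuinely different ingredient is your approach to the simplicity of $C_\lambda$ for $\lambda\neq(1^n)$, and that is where the argument is incomplete.

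You want to show that any non-zero $\kring\fin$-submodule $M\subset C_\lambda$ has $M(\n)\neq 0$, by taking minimal $m>n$ with $M(\mathbf{m})\neq 0$, a non-zero $x=\sum_A s_A\in C_\lambda(\mathbf{m})=\bigoplus_A S_\lambda[A]$, and using collapses $g:\mathbf{m}\to\mathbf{m}$ landing in an $n$-subset $A_0$ to force $x=0$. You correctly identify the obstruction: under such a $g$ the summands that survive are not only $S_\lambda[A_0]$ but also all $S_\lambda[(A_0\setminus\{a\})\cup\{y\}]$ for $a\in A_0$ not equal to the collapse target and $y\notin A_0$, and each surviving summand is carried into $S_\lambda[A_0]$ by a \emph{different} bijection. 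So the relation $g_*(x)=0$ mixes $s_{A_0}$ with many other coordinates, twisted by these bijections. You write that ``a careful cancellation argument, varying the collapse target and exploiting the fact that $m>n$ provides the necessary freedom, is required to isolate each $s_{A_0}$,'' but you do not carry it out — and it is not a routine verification, since the relations do not directly decouple. As written, this leaves the central claim of the theorem unproved. The paper sidesteps this combinatorics entirely: it observes that a hypothetical extra composition factor $G$ of $C_\lambda$ would, by Proposition~\ref{prop:simples_general_kring}, satisfy $G(\mathbf{n+1})\neq 0$; then Theorem~\ref{thm:indecomposable_projectives} forces $G$ to be a quotient of $S_\mu(\pbar)$ for some $\mu\vdash n+1$ or to equal $\Lambda^n(\pbar)$, and both possibilities are killed by the Hom-vanishing in Theorem~\ref{thm:fundamental_hom_tensor_pbar} and Proposition~\ref{prop:decompose_tensor_pbar} (since $C_\lambda$ is a quotient of $\pbar^{\otimes n}$ and $S_\lambda(\pbar)$ is a summand thereof). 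If you want to keep your route, you must actually exhibit the cancellation (for instance by averaging the relations over all choices of collapse target $b\in A_0$ and all $A_0$, and showing the resulting system has only the trivial solution); otherwise it is cleaner to switch to the abstract argument the paper uses, for which you already have all the inputs in hand.
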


\begin{proof}
We first prove that, for $\lambda \neq (1^n)$,  $C_\lambda = \tilde{C}_\lambda$ is simple. Suppose otherwise, since $C_\lambda (\n)$ is a simple $\sym_n$-module by Lemma \ref{lem:evaluate_C_lambda},  then $C_\lambda$ would have a non-zero composition factor $G$ such that $G(\mathbf{n}) =0$. 
 Since $C_\lambda$ is a quotient of $\pbar^{\otimes n}$ by Corollary \ref{cor:C_lambda_quotient}, $G$ is a composition factor of the latter, hence Corollary \ref{cor:composition_series_pbar_otimes_t}  gives that $G(\mathbf{n+1})\neq 0$.
 
Now, our understanding of the indecomposable projective generators of $\f (\fin)$ (see Theorem \ref{thm:indecomposable_projectives}) implies that either $G$ is a quotient of $S_\mu (\pbar)$ for some partition $\mu \vdash n+1$  or $G$ is $\Lambda^n (\pbar)$, hence a quotient of $\Lambda^{n+1}(\pfin)$. The first case is excluded since $\hom_{\kring\fin} (S_\mu (\pbar), \pbar^{\otimes n})=0$, as a consequence of Corollary \ref{cor:morphisms_pbar}, since $S_\mu (\pbar)$ is a direct summand of $\pbar^{\otimes n+1}$. The second case is excluded similarly, since $\hom_{\kring\fin} (\Lambda^{n+1}(\pfin) , S_\lambda (\pbar))=0$ for $\lambda \neq (1^n)$, again as a consequence of Corollary \ref{cor:morphisms_pbar}; explicitly, any such morphism would factor across $\Lambda^n (\pbar)$ and $\hom_{\kring\fin} (\Lambda^{n}(\pbar) , S_\lambda (\pbar))=0$ by Proposition \ref{prop:decompose_tensor_pbar}.

The remaining statements follow, using the indecomposability of the projectives of the form $S_\lambda (\pbar)$ and $\Lambda^n (\pfin)$, as in  Theorem \ref{thm:indecomposable_projectives}.
\end{proof}

As a consequence, one sees that $\kring \finj (\n, -)$ is almost semi-simple: 

\begin{corollary}
\label{cor:structure_pfinj}
For $0< n \in \nat$, there is an isomorphism of $\kring\fin$-modules
\[
\kring \finj (\n, -) 
\cong 
\Lambda^n (\pfin) 
\oplus 
\bigoplus_{\substack{\lambda \vdash n \\ \lambda \neq (1^n) }}
C_\lambda^{\oplus \dim S_\lambda}.
\]
\end{corollary}

\begin{remark}
In \cite{2022arXiv221000399S}, Sam, Snowden and Tosteson  gave a model for the category of $\kring\fin\op$-modules using polynomial representations of the Witt Lie algebra. They observed that, under their equivalence (and using duality), the classification of the simple $\kring\fin$-modules given in \cite{2014arXiv1406.0786W} corresponds to the classification of certain irreducible modules over the Witt Lie algebras given by Feigin and Shoikhet \cite{MR2350124}. 

The construction of the irreducible objects given by Feigin and Shoikhet can be understood as the analysis of the structure of 
$\kring \finj (\n, -) $ as a $\kring\fin$-module (cf. Corollary \ref{cor:structure_pfinj}).
\end{remark}

\begin{remark}
\label{rem:projectives_finj_versus_simples_fin}
Restricting to the $\kring\finj$-module structure, for any partition $\lambda \vdash n$, $\kring \finj (\n, - ) \otimes_{\sym_n} S_\lambda$ is the projective cover of the simple $\kring \finj$-module $S_\lambda$ supported on $\n$. This projective does not have finite length, whereas (for $\lambda \neq (1^n)$) it is the restriction to $\kring \finj$ of a {\em simple} $\kring \fin$-module!
\end{remark}

%%%%%%%%%%%%%%%%%%%%%%%%%%%%%%%%%%%%%%%%%%%%%%%%%%%%%%%%%%%%%%ù
\subsection{The image of the norm-like map}

Proposition \ref{prop:nat_finj_fsop} provides the natural transformation of $\kring\fin$-modules
\begin{eqnarray}
\label{eqn:finj_fsop_n}
\kring \finj (\n, -) \rightarrow D \fs (-, \n).
\end{eqnarray}

Using Corollary \ref{cor:structure_pfinj} it is straightforward to determine the kernel of this map, as follows.

\begin{example}
For $n=1$, one has $\finj (\mathbf{1}, X) \cong X$ and $\fs (X, \mathbf{1}) = \{*\}$. The map (\ref{eqn:finj_fsop_n}) identifies as the map  $\kring [X] \rightarrow \kring$ of (\ref{eqn:proj_kX_k}). By definition, this has kernel $\pbar (X)$. 
\end{example}

\begin{proposition}
\label{prop:ker_norm_map}
For $0< n \in \nat$, the map  (\ref{eqn:finj_fsop_n}) has kernel $\Lambda^n (\pbar)$.
\end{proposition}

\begin{proof}
By Corollary \ref{cor:structure_pfinj}, all composition factors of $\kring \finj (\n, -)$ are detected by evaluation on $\n$, apart from $\Lambda^n (\pbar)$, which is detected  on $\mathbf{n+1}$. Now, Proposition \ref{prop:nat_finj_fsop} gives that the map is an isomorphism when evaluated upon $\n$. It follows that the kernel of the map is contained in $\Lambda^n (\pbar)$. Hence, to conclude, it suffices to show that the map is {\em not} injective when evaluated upon $\mathbf{n+1}$. 

Considered as a $\sym_{n+1}$-module, $\Lambda^n (\pbar) (\mathbf{n+1})$ identifies as the sign representation $\mathrm{sgn}_{n+1}$, by Lemma \ref{lem:connectivity_Lambda}. To conclude, one uses that $\kring \fs (\mathbf{n+1}, \mathbf{n})$, considered as a $\sym_{n+1}$-module, does not contain   $\mathrm{sgn}_{n+1}$, which is straightforward.
\end{proof}

\begin{corollary}
\label{cor:factor_norm}
For $0<n \in \nat$, the natural transformation  (\ref{eqn:norm_fin}) for $X= \n$  factors as 
\[
\kring \fin (\n,-) \twoheadrightarrow 
\kring \finj (\n, -) /\Lambda^n (\pbar)
\hookrightarrow 
D \kring \fin (-,\n).
\]  
Here, $\kring \finj (\n, -) /\Lambda^n (\pbar) \cong \bigoplus_{\substack{\lambda \vdash n }}
\tilde{C}_\lambda^{\oplus \dim S_\lambda}$ is semi-simple.
\end{corollary}

%%%%%%%%%%%%%%%%%%%%%%%%%%%%%%%%%%%%%%%%%%%%%%%%%%%%%%%%%%%%%%%%%%%%%%%%%%%%%%%%%%%%%%%%%
\subsection{Evaluation of the simples}

The purpose of this section is to generalize Lemma \ref{lem:evaluate_C_lambda} by identifying $C_\lambda (\mathbf{t})$ as a $\kring \sym_t$-module, for each simple functor $C_\lambda$. 

\begin{notation}
\ 
\begin{enumerate}
\item 
For partitions $\lambda$, $\mu$ write $\mu \preceq \lambda$ if $\mu_i \leq \lambda_i$ whenever $\mu_i$ is defined (this can be pictured as stating that the Young diagram of $\mu$ is contained within that of $\lambda$). If $\mu \preceq \lambda$, the associated skew partition is denoted $\lambda / \mu$. 
\item 
If $\mu \preceq \lambda$, write $\lambda / \mu \in \hs$ to signify that $\lambda / \mu$ is a horizontal strip, i.e., contains at most one box in each column of the associated Young diagram; otherwise  $\lambda / \mu \not \in \hs$.
\end{enumerate}
\end{notation}

\begin{proposition}
\label{prop:evaluate_simples}
Suppose that $\lambda \vdash n$ is a partition of a positive integer $n$  and let $t \in \nat$. 
\begin{enumerate}
\item 
If $\lambda \neq (1^n)$, then as a $\kring \sym_t$-module
\[
C_\lambda (\mathbf{t}) \cong  
\left\{
\begin{array}{ll}
0 & t < n \\
\bigoplus_{\substack{\lambda \preceq \mu \\ |\mu |=t \\ \mu/\lambda \in \hs}} S_\mu & t \geq n.
\end{array}
\right.
\]
\item 
If $\lambda = (1^n)$, so that $C_\lambda = \Lambda^n (\pbar)$, as a $\kring \sym_t$-module
\[
\Lambda ^n(\pbar) (\mathbf{t}) \cong 
\left\{
\begin{array}{ll}
0 & t \leq n \\
S_{(t-n, 1^n)} & t > n.
\end{array}
\right.
\]
\end{enumerate}
\end{proposition}

\begin{proof}
In the first case, $C_\lambda (\mathbf{t}) \cong \kring \finj (\mathbf{n}, \mathbf{t}) \otimes _{\sym_n} S_\lambda$ as a $\kring \sym_t$-module. This is zero if $t<n$. If $t \geq n$, the permutation $\kring (\sym_n\op \times \sym_t)$-module $\kring \finj (\mathbf{n}, \mathbf{t})$ is isomorphic to $\kring \sym_t / \sym_{t-n} \cong \kring \sym_t \otimes_{\sym_{t-n}} \triv_{t-n}$, using the residual right $\sym_n$-action (cf. the proof of Proposition \ref{prop:identify_induction}). Thus one has the isomorphism of $\kring \sym_t$-modules 
\[
\kring \finj (\mathbf{n}, \mathbf{t}) \otimes _{\sym_n} S_\lambda
\cong
\kring \sym_t \otimes_{\sym_{t-n} \times \sym_n} (\triv_{t-n} \boxtimes S_\lambda).
\]
The right hand side identifies with the given expression by the Pieri rule. 

The second case is proved by a similar argument. One first consider $\kring \finj (\mathbf{n}, \mathbf{t}) \otimes _{\sym_n} \sgn_n$. As above, we need only consider the case $t \geq n$.  The only partitions $\mu \vdash t$ for which $(1^n) \preceq \mu$ and $\mu / (1^n) \in \hs$ are $(t-n,1^n)$ (if $t>n$)  and $(t-(n-1), 1^{n-1})$. Now, by Corollary \ref{cor:structure_pfinj}, the functor $\kring \finj (\mathbf{n}, -) \otimes _{\sym_n} \sgn_n$ is isomorphic to $\Lambda^n(\pfin)$, which has two composition factors, namely $\Lambda^n (\pbar)$ and $\Lambda^{n-1} (\pbar)$. By an obvious induction, the contribution from $\Lambda^n (\pbar)(\mathbf{t})$ is zero for $t=n$ and $S_{(t-n,1^n)}$ for $t>n$. 
\end{proof}

This allows us to decompose the standard projectives $S_\lambda (\pbar)$ as a direct sum of indecomposable projectives, for $\lambda \vdash n$, with $n$ a positive integer.

\begin{thm}
\label{thm:decompose_Slambda_Pfin}
For $n$ a positive integer and  a partition $\lambda \vdash n$,
\[
S_\lambda (\pfin) 
\cong 
\left\{
\begin{array}{ll}
\bigoplus_{\substack{\nu \preceq \lambda \\ \lambda/\nu \in \hs}}
S_\nu (\pbar) & \lambda \neq (s ,1^{n-s}) \mbox{ for $1 \leq s \leq n$} 
\\
 \Lambda^{n-s+1}(\pfin)  \oplus 
\bigoplus_{\substack{\nu \preceq \lambda \\ \nu \not \in \{ (1^{n-s+1}), (1^{n-s}) \}\\ \lambda/\nu \in \hs}}
S_\nu (\pbar)
 & \lambda = (s ,1^{n-s}) \mbox{ for $1 \leq s \leq n$}.
\end{array}
\right.
\]
\end{thm}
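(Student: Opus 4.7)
The plan is to compute $[S_\lambda(\pfin)]$ in the Grothendieck group $K_0(\f(\fin))$ by applying the Schur functor $S_\lambda$ to the defining short exact sequence $0\to\pbar\to\pfin\to\kbar\to 0$, and then upgrade this equality of classes to an isomorphism using that $S_\lambda(\pfin)$ is projective together with the classification of indecomposable projectives in Theorem~\ref{thm:indecomposable_projectives}.

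Since $\kring$ has characteristic zero, applying $S_\lambda$ to the above short exact sequence equips $S_\lambda(\pfin)$ with a natural filtration whose associated graded is $\bigoplus_{\mu,\nu}c^\lambda_{\mu\nu}\,S_\mu(\pbar)\otimes S_\nu(\kbar)$, where $c^\lambda_{\mu\nu}$ is the Littlewood--Richardson coefficient. Because $\kbar^{\otimes m}\cong\kbar$ with trivial $\sym_m$-action, one has $S_\nu(\kbar)\cong\kbar$ when $\nu=(m)$ is a single row and $S_\nu(\kbar)=0$ otherwise; combined with $S_\mu(\pbar)\otimes\kbar\cong S_\mu(\pbar)$ (valid since $\pbar$ is supported on non-empty sets) and Pieri's rule, this yields
\[
[S_\lambda(\pfin)] \;=\; \sum_{\substack{\mu\preceq\lambda\\ \lambda/\mu\in\hs}}[S_\mu(\pbar)] \qquad \text{in } K_0(\f(\fin)).
\]
An elementary inspection of Young diagrams shows that a partition $\mu=(1^k)$ contributes to this sum only when $\lambda$ is a hook $(s,1^{n-s})$, and in this case exactly $\mu=(1^{n-s})$ and $\mu=(1^{n-s+1})$ occur (the obstruction being that any other column of $\lambda$ must have at most one box for $\lambda/\mu$ to be a horizontal strip, forcing $\lambda_2\leq 1$).

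For non-hook $\lambda$, every $S_\mu(\pbar)$ appearing in the sum is an indecomposable projective by Theorem~\ref{thm:projectives_pbar}. For $\lambda=(s,1^{n-s})$, the short exact sequence~(\ref{eqn:ses_Lambda_pfin}) of Theorem~\ref{thm:projective_cover_Lambda} furnishes the rewriting $[\Lambda^{n-s}(\pbar)]+[\Lambda^{n-s+1}(\pbar)]=[\Lambda^{n-s+1}(\pfin)]$, which bundles the two exceptional $(1^k)$-summands into the single indecomposable projective $\Lambda^{n-s+1}(\pfin)$. In both cases the claimed right-hand side is a direct sum of pairwise non-isomorphic indecomposable projectives whose total class equals $[S_\lambda(\pfin)]$; since $S_\lambda(\pfin)$ is itself projective (being a direct summand of $\pfin_\n$) and all objects in sight have finite length by Corollary~\ref{cor:finite_length}, Krull--Schmidt applies and the equality of classes lifts to an isomorphism once one observes that the classes of the indecomposable projectives listed in Theorem~\ref{thm:indecomposable_projectives} are linearly independent in $K_0$---equivalently, the Cartan matrix is upper unitriangular in the ordering in which each $\Lambda^m(\pfin)$ contributes the two composition factors recorded by~(\ref{eqn:ses_Lambda_pfin}) while each $S_\mu(\pbar)$ with $\mu\ne(1^{|\mu|})$ contributes a single simple. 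The main obstacle is the combinatorial isolation of the hook case and the rewriting via~(\ref{eqn:ses_Lambda_pfin}); the remainder is a routine application of Schur functor formalism and Krull--Schmidt.
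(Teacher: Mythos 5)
Your plan (apply the Schur functor to the short exact sequence defining $\pbar$, analyse the resulting filtration, isolate the hook case) is genuinely different from the paper's proof, which instead computes the multiplicity of the projective cover of each simple $C_\nu$ as a direct summand of $S_\lambda(\pfin)$ directly from $\dim\hom_{\kring\fin}(S_\lambda(\pfin), C_\nu)=\dim\hom_{\sym_n}(S_\lambda, C_\nu(\n))$, read off from Proposition~\ref{prop:evaluate_simples}. Your route is in fact exactly the ``alternative approach'' that the author flags in the remark after Theorem~\ref{thm:pfin_Lambda}, and the $K_0$ computation and the combinatorial isolation of the hook case are both correct. Where the two routes diverge in cost is exactly the step you describe as ``routine'': the paper's argument terminates immediately because the multiplicity of an indecomposable projective as a direct summand is an honest $\hom$-dimension, whereas your argument must promote an identity in the Grothendieck group to an isomorphism.

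That promotion is where there is a genuine gap. Your stated justification is that the Cartan matrix is upper unitriangular because ``each $S_\mu(\pbar)$ with $\mu\neq(1^{|\mu|})$ contributes a single simple,'' i.e., that these indecomposable projectives are simple. This is false. For instance $S^3(\pbar)=S_{(3)}(\pbar)$ has $\dim S^3(\pbar)(\mathbf{t})=\binom{t+1}{3}$, while Proposition~\ref{prop:evaluate_simples} gives $\dim C_{(3)}(\mathbf{t})=\binom{t}{3}$; the surjection $S^3(\pbar)\twoheadrightarrow C_{(3)}$ from Corollary~\ref{cor:C_lambda_quotient} therefore has a nonzero kernel, so $S^3(\pbar)$ is \emph{not} simple. (More generally, Theorem~\ref{thm:hom_proj_cover_pbar_otimes} and the surrounding discussion make clear that $\pbar^{\otimes n}$ has composition factors in many degrees.) Moreover, the Cartan matrix is not triangular in the naive grading by $|\lambda|$ in either the $C_\lambda$ or the $\tilde C_\lambda$ indexing, since $\Lambda^{m}(\pfin)$ always carries a composition factor of strictly higher grade than its cosocle, so linear independence of the classes of indecomposable projectives is not self-evident. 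The cleanest repair avoids $K_0$ altogether: in your filtration of $S_\lambda(\pfin)$, every subquotient $S_\mu(\pbar)$ with $\mu\neq(1^k)$ is projective by Theorem~\ref{thm:projectives_pbar} and hence splits off. For non-hook $\lambda$ no exterior-power subquotient occurs and you are done. For $\lambda=(s,1^{n-s})$, after splitting off all projective subquotients there remains a projective direct summand $Q$ of $S_\lambda(\pfin)$ whose only composition factors are $\Lambda^{n-s}(\pbar)$ and $\Lambda^{n-s+1}(\pbar)$, each once; Theorem~\ref{thm:indecomposable_projectives} and Theorem~\ref{thm:projective_cover_Lambda} then force $Q\cong\Lambda^{n-s+1}(\pfin)$, since no indecomposable projective has $\Lambda^{k}(\pbar)$ as its sole composition factor. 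With this replacement your argument becomes a complete and correct alternative proof.
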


\begin{proof}
By standard arguments, the multiplicity of the projective cover of the simple functor $C_\nu$ as a direct summand of $S_\lambda (\pfin)$ is 
\[
\dim \hom_{\kring \fin} (S_\lambda (\pfin), C_\nu) = \dim \hom_{\sym_n} (S_\lambda, C_\nu (\mathbf{n})).
\] 
The right hand side is calculated by Proposition \ref{prop:evaluate_simples} (this is either $0$ or $1$, depending on the value of $\nu$). Since $n$ is positive, if this is non-zero  we must have $|\nu|>0$. This yields the decomposition in the statement, using the identification of the projective covers given in Theorem \ref{thm:classify_simples}.
\end{proof}

\section{The projective cover of $\pbar^{\otimes \bullet}$ and a Morita equivalence}
\label{sect:proj_cover}

The projective covers of the $\kring \fin$-modules $\pbar^{\otimes n}$, for $ n \in \nat$,  together with $\pfin_\mathbf{0} = \kring$, provide a convenient choice of projective generators for the category $\f (\fin)$. By Morita equivalence, one obtains an alternative description of $\f (\fin)$ (see Theorem \ref{thm:fin-modules_morita}). 

%%%%%%%%%%%%%%%%%%%%%%%%%%%%%%%%%%%%%%%%%%%%%%%%%%%%%%%%%%%%%%%%%%%%%%%%%%%%
\subsection{Introducing the projective cover of $\pbar^{\otimes n}$}

Recall the convention $\pbar^{\otimes 0} = \kbar$ and define $\pbar^{\otimes -1}$ to be $\kring_\mathbf{0}$. Projective covers of these functors are described by the following short exact sequences 
\begin{eqnarray*}
&&0
\rightarrow 
\kbar 
\rightarrow 
\kring = \pfin_{\mathbf{0}} 
\rightarrow 
\kring_\mathbf{0} = \pbar^{\otimes -1}
\rightarrow 
0
\\
&& 0
\rightarrow 
\pbar 
\rightarrow 
\pfin 
\rightarrow 
\kbar = \pbar^{\otimes 0}
\rightarrow 
0.
\end{eqnarray*}

Extending these examples, one can take the following definition:

\begin{definition}
\label{defn:proj_cover}
For $n \in \nat \cup \{-1 \}$, let $\mathbb{P}_n$ be the projective cover of $\pbar^{\otimes n}$ given by 
\[
\mathbb{P}_n := \Lambda ^{n+1} (\pfin) \oplus (\pbar^{\otimes n} / \Lambda^n (\pbar) )
\]
equipped with the  surjection $\mathfrak{p}_n : \mathbb{P}_n \twoheadrightarrow \pbar^{\otimes n}$ induced by 
the projection $\Lambda^{n+1} (\pfin) \twoheadrightarrow \Lambda^n (\pbar) \subset \pbar^{\otimes n}$ and the (canonical) inclusion of the direct summand $(\pbar^{\otimes n} / \Lambda^n (\pbar) )$ in $\pbar^{\otimes n}$.
\end{definition}

By construction, one has the short exact sequence 
\[
0
\rightarrow 
\Lambda^{n+1} (\pbar) 
\rightarrow 
\mathbb{P}_n 
\stackrel{\mathfrak{p}_n}{\rightarrow} 
\pbar^{\otimes n} 
\rightarrow 
0.
\]
(That $\mathbb{P}_n$ is a projective cover of $\pbar^{\otimes n}$ is clear.)

\begin{lemma}
\label{lem:morphisms_mathfrak_p}
For $n \in \nat \cup \{-1\}$, $\mathfrak{p}_n$ induces  isomorphisms
\begin{eqnarray*}
\hom_{\kring \fin} (\pbar^{\otimes n}, \pbar ^{\otimes n} ) 
&\stackrel{\cong}{\rightarrow}& 
\hom_{\kring\fin} (\mathbb{P}_n, \pbar ^{\otimes n} )
\\
\hom_{\kring \fin} (\mathbb{P}_n, \mathbb{P}_n )
&\stackrel{\cong}{\rightarrow}& 
\hom_{\kring\fin} (\mathbb{P}_n, \pbar ^{\otimes n} ).
\end{eqnarray*}
\end{lemma}

\begin{proof}
The first statement follows from the fact that $\hom _{\kring \fin} (\Lambda^{n+1}(\pbar), \pbar ^{\otimes n} ) =0$, a consequence of Corollary \ref{cor:morphisms_pbar}. The second follows from the projectivity of $\mathbb{P}_n$ together with the fact that $\hom_{\kring \fin} (\mathbb{P}_n, \Lambda^{n+1} (\pbar))=0$, since $\Lambda^{n+1}(\pbar)$ is not a composition factor of the head of $\mathbb{P}_n$ (the largest semi-simple quotient of $\mathbb{P}_n$, sometimes called the cosocle). 
This is seen as follows: by construction the head  of $\mathbb{P}_n$ identifies with that of $\pbar^{\otimes n}$; the latter does not contain $\Lambda^{n+1}(\pbar)$ as a composition factor.
\end{proof}

As a consequence one has: 

\begin{proposition}
\label{prop:endomorphism_proj_cover}
For $n \in \nat \cup \{-1 \}$, the morphism $\mathfrak{p}_n$ induces an isomorphism  $\mathrm{End}_{\kring \fin} (\mathbb{P}_n) \cong  \mathrm{End}_{\kring \fin} (\pbar^{\otimes n})$ of $\kring$-algebras. In particular,  $\mathrm{End}_{\kring \fin} (\mathbb{P}_n) \cong \kring \sym_n$ as $\kring$-algebras, where $\sym_{-1}$ is taken to be the trivial group, by convention.
\end{proposition}

\begin{proof}
Lemma \ref{lem:morphisms_mathfrak_p} gives the isomorphism of $\kring$-vector spaces  $\mathrm{End}_{\kring \fin} (\mathbb{P}_n) \cong  \mathrm{End}_{\kring \fin} (\pbar^{\otimes n})$. It remains to check that this is an isomorphism of $\kring$-algebras; this is straightforward.
\end{proof}

\begin{remark}
\label{rem:right_actions}
The canonical action of $\kring \sym_n$ on $\pfin_\n$ is on the right. To be consistent with this via the maps 
\[
\mathbb{P}_n \twoheadrightarrow \pbar^{\otimes n} \hookrightarrow \pfin_\n,
\]
the action of $\sym_n$ on $\mathbb{P}_n$ and on $\pbar^{\otimes n}$ is taken to be on the right. For $\pbar^{\otimes n}$ this is the action on the right by permutation of tensor factors and that on $\mathbb{P}_n$ is given by Proposition \ref{prop:endomorphism_proj_cover} (as a right action).
\end{remark}

%%%%%%%%%%%%%%%%%%%%%%%%%%%%%%%%%%%%%%%%%%%%%%%%%%%%%%%%%%%%%%%%%
\subsection{Projective generators and the associated Morita equivalence}

The following essentially follows from the constructions:

\begin{proposition}
\label{prop:projec_generators}
The set $\{ \mathbb{P}_n \mid  n \in \nat \cup \{-1 \} \}$ is a set of projective generators  of $\f (\fin)$.
\end{proposition}

\begin{proof}
The indecomposable projectives in $\f(\fin)$ are classified in Theorem \ref{thm:indecomposable_projectives}. It is clear that any such indecomposable projective occurs as a direct summand of $\mathbb{P}_n$ for some $n \in \nat \cup \{-1 \}$. 
\end{proof}

This has the  consequence:

\begin{corollary}
\label{cor:multiplicity_composition_factors}
For a $\kring \fin$-module $F$, 
\begin{enumerate}
\item 
the multiplicity of $\kring _\mathbf{0}$ as a composition factor of $F$ is $\dim F(\mathbf{0})$;
\item 
for $\lambda \vdash n$ with $n>0$, the multiplicity of $C_\lambda$ as a composition factor of $F$ is equal to 
\[
\dim \hom_{\sym_n} (S_\lambda ,\hom_{\kring \fin} (\mathbb{P}_n , F)),
\]
using the $\kring \sym_n$-module structure on $\hom_{\kring \fin} (\mathbb{P}_n , F)$ induced by the $\kring \sym_n\op$-action on  $\mathbb{P}_n$.
\end{enumerate}
In particular, $\hom_{\kring \fin}(\mathbb{P}_\bullet, F)$, considered as a functor on $\fb \amalg \{-1\}$,   determines the composition factors of $F$.
\end{corollary}

\begin{remark}
\label{rem:duality_hom_tensor}
For $\lambda \vdash n$ with $n>0$, since the simple $\sym_n$-modules are self dual and invariants and coinvariants are isomorphic in characteristic zero, there is an isomorphism of vector spaces
\[
\hom_{\sym_n} (S_\lambda ,\hom_{\kring \fin} (\mathbb{P}_n , F))
\cong 
S_\lambda \otimes_{\sym_n}\hom_{\kring \fin} (\mathbb{P}_n , F) 
\]
where, on the right, $S_\lambda$ is considered as a right $\sym_n$-module. Such identifications are used henceforth without further comment.
\end{remark}

\begin{example}
\label{exam:multiplicity_Lambda}
For $0< n \in \nat$, the multiplicity of $\Lambda^n (\pbar)$ in $F$ is the dimension of 
\[
\hom_{\kring \fin} (\Lambda^{n+1}(\pfin) , F) \cong  \sgn_{n+1} \otimes_{\sym_{n+1}} F(\mathbf{n+1}).
\]
\end{example}

\begin{notation}
\label{nota:calc_proj_cover}
Let $\calc (\mathbb{P}_\bullet)$ denote the full subcategory of $\f (\fin)$ with set of objects $\{ \mathbb{P}_n \mid n \in \nat \cup \{-1 \} \} $.
\end{notation}

Since $\{ \mathbb{P}_n \mid n \in \nat \cup \{-1 \} \} $ is a set of small projective generators of $\f (\fin)$,  Freyd's theorem (see \cite[Theorem 3.1]{MR294454} for a version over $\zed$) gives:

\begin{thm}
\label{thm:fin-modules_morita} 
The category  $\f (\fin)$ is equivalent to the category of $\calc(\mathbb{P}_\bullet)\op$-modules (i.e., $\kring$-linear functors from $\calc (\mathbb{P}_\bullet)\op$ to $\kmod$).
\end{thm}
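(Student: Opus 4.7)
The plan is to apply the classical Morita-type theorem associated with a set of compact projective generators. Define the evaluation functor
\[
\Phi : \f(\fin) \longrightarrow \calc(\mathbb{P}_\bullet)\op\text{-modules}, \qquad F \longmapsto \hom_{\kring\fin}(\mathbb{P}_\bullet, F),
\]
where precomposition in $\calc(\mathbb{P}_\bullet)$ supplies the contravariant functoriality in $\mathbb{P}_\bullet$. A candidate quasi-inverse $\Psi$ is the coend
\[
\Psi(M) := M \otimes_{\calc(\mathbb{P}_\bullet)} \mathbb{P}_\bullet,
\]
that is, the quotient of $\bigoplus_n M(\mathbb{P}_n) \otimes \mathbb{P}_n$ by the usual tensor-over-a-category relations; this lies in $\f(\fin)$ because each $\mathbb{P}_n$ does.

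First I would verify the three hypotheses that underpin the formalism: \emph{(i)} each $\mathbb{P}_n$ is projective in $\f(\fin)$, by Definition \ref{defn:proj_cover}; \emph{(ii)} each $\mathbb{P}_n$ is compact, i.e.\ $\hom_{\kring\fin}(\mathbb{P}_n, -)$ preserves arbitrary direct sums---this holds because, via Corollary \ref{cor:split} and the decomposition of $\pfin \otimes \pbar^{\otimes n}$ recalled in the proof of Theorem \ref{thm:indecomposable_projectives}, $\mathbb{P}_n$ is a direct summand of the representable $\pfin_{\mathbf{n+1}}$, and compactness of the latter is immediate from Yoneda; and \emph{(iii)} the $\mathbb{P}_n$ collectively generate $\f(\fin)$, by Proposition \ref{prop:projec_generators}.

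From \emph{(i)} and \emph{(ii)}, $\Phi$ is exact and cocontinuous, and the coend formula exhibits $\Psi$ as its left adjoint. The unit $M \to \Phi\Psi(M)$ is tautologically an isomorphism on the representable modules $\calc(\mathbb{P}_\bullet)(-, \mathbb{P}_n)$ by Yoneda; presenting an arbitrary $M$ as the cokernel of a map between free $\calc(\mathbb{P}_\bullet)\op$-modules and invoking right-exactness of both $\Phi$ and $\Psi$, one sees the unit is an isomorphism throughout.

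For the counit $\Psi\Phi(F) \to F$, it is tautologically an isomorphism on each generator $\mathbb{P}_n$ and hence on every direct sum of such. By \emph{(iii)}, every $F \in \f(\fin)$ admits a presentation $\bigoplus_j \mathbb{P}_{n_j} \to \bigoplus_i \mathbb{P}_{n_i} \to F \to 0$, and applying the right-exact functor $\Psi\Phi$ together with the five lemma forces the counit on $F$ to be an isomorphism as well. The only real step requiring attention is the verification of compactness of $\mathbb{P}_n$; once that is in hand, everything else is formal Gabriel--Mitchell bookkeeping.
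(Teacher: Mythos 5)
Your proposal is correct and is precisely the ``standard Morita equivalence argument'' that the paper invokes without spelling out: the evaluation functor $\Phi = \hom_{\kring\fin}(\mathbb{P}_\bullet,-)$, the coend quasi-inverse, and the verification of projectivity, compactness (each $\mathbb{P}_n$ being a summand of the representable $\pfin_{\mathbf{n+1}}$), and generation via Proposition \ref{prop:projec_generators}. Nothing is missing; you have simply made explicit the Gabriel--Mitchell bookkeeping that the paper leaves implicit.
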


%%%%%%%%%%%%%%%%%%%%%%%%%%%%%%%%%%%%%%%%%%%%%%%%%%%%%%%%%%%%%%%%%%%%%%%%%%%%%%%%%%%%%%%%%%%%%%%%%%%%%%%%%%%%%%%%%%%
\part{Calculations in characteristic zero}
\label{part:calculate}

In this part we implement the  results of Part \ref{part:zero}. In particular, Theorem \ref{thm:hom_proj_cover}, which shows how to calculate the composition factors of a $\kring \fin$-module $F$, is applied notably to the cases $F=  \pbar^{\otimes n}$ and its projective cover. 
 These results are applied in Theorem \ref{thm:endo_pbar_otimes} to calculate the space of morphisms between the $\kring \fin$-modules $\pbar^{\otimes m}$ and $\pbar^{\otimes n}$, for $m,n \in \nat$. This extends  the partial calculation Corollary \ref{cor:morphisms_pbar}.

The results exploit general inversion techniques for suitable $\kring \fb$-modules that are introduced in Section \ref{sect:invert}.

\section{Inverting $-\odot \triv$}
\label{sect:invert}

Working in the category of $\kring \fb$-modules,  one has the functor $- \odot \triv$, where $\triv$ is as in Notation \ref{nota:triv}.  The purpose of this section is to explain that this functor is invertible in a suitable sense (see Proposition \ref{prop:invert_odot_triv} for the basic statement). This also applies when working with $\kring \fb\op$-modules, {\em mutatis mutandis}.

One application arises from Example \ref{exam:kring_fin_odot}, which shows that the underlying $\kring \fb$-bimodule of $\kring \fin$ is isomorphic to $\triv \odot_\fb \kring \fs$. Thus inversion allows  the underlying $\kring \fb$-bimodule of $\kring \fs$ to be recovered from that of $\kring \fin$.

\begin{remark}
This material could be explained by working with characters, where such techniques are standard. However, in the presence of more structure,  working with the Grothendieck group is preferable. 
\end{remark}

%%%%%%%%%%%%%%%%%%%%%%%%%%%%%%%%%%%%%%%%%%%%%%%%%%%%%%%%%%%%%%%%%%%%%%%
\subsection{The basic result}

We require to work with isomorphism classes of objects and  with virtual objects; to do so we pass to the Grothendieck group of $\kring \fb$-modules. This is possible, since all the $\kring \fb$-modules considered here have the property that, evaluated on a finite set, they have finite dimension. Such technical details are suppressed, for simplicity of exposition. 

\begin{notation}
\label{nota:groth}
\ 
\begin{enumerate}
\item 
For $M$ a $\kring \fb$-module, write $[M]$ for its class in the Grothendieck group. 
\item 
For $M, N$  $\kring \fb$-modules, denote by $[M]\odot [N]$ the class $[M\odot N]$. This extends bilinearly to define $\odot$ on the whole Grothendieck group. 
\end{enumerate}
\end{notation}

For example, one has $[\triv]$ in the Grothendieck group and the associated operation $- \odot [\triv]$, corresponding to the functor $-\odot \triv$.  For the sign representations, we work with the infinite sum 
\[
\W(0) := \sum_{t \in \nat} (-1)^t[\sgn_t].
\]

The key invertibility result is the following, in which $\kring_\mathbf{0}$ denotes $\kring$ supported on $\mathbf{0}$.

\begin{proposition}
\label{prop:invert_odot_triv}
There is an equality in the Grothendieck group of $\kring \fb$-modules:
\begin{eqnarray}
\label{eqn:invert}
[\triv] \odot \W(0) 
=
[\kring_\mathbf{0}].
\end{eqnarray}
\end{proposition}

\begin{proof}
This is a standard result; for completeness, a proof is given. Consider the evaluation on $\n$, thus passing to the Grothendieck group of $\kring \sym_n$-modules. 
When $n=0$, there is only one term that appears, namely the trivial representation. For $0<n \in \nat$,  the term on the left hand side of (\ref{exam:kfin}) yields
\[
\sum_{k=0}^n
(-1)^k [\triv_{n-k} \odot \sgn_k].
\]
By the Pieri rule, 
$ 
\triv_{n-k} \odot \sgn_k 
\cong 
S_{(n-k+1,1^{k-1})} \oplus S_{(n-k,1^k)}$, 
where the first term is understood to be zero when $k=0$ and the second to be zero when $n=k$.
A straightforward verification shows that the alternating sum in the Grothendieck group is zero.
\end{proof}

\begin{example}
\label{exam:kring_fs_from_fin}
Using the identification given in Example \ref{exam:kring_fin_odot}, one obtains the equality 
$ 
[\kring \fs] = \W(0) \odot_\fb [\kring \fin]
$ 
in the Grothendieck ring of $\kring \fb $-bimodules.
\end{example}

%%%%%%%%%%%%%%%%%%%%%%%%%%%%%%%%%%%%%%%%%%%%%%%%%%%%%%%%%%%%%%%%%%%%%%%%%%%%%%%%%%%%%%%%%%%%%%
\subsection{A truncated variant}

\begin{notation}
\label{nota:H_S}
For $k \in \nat$, set 
\begin{enumerate}
\item
$H(0) = \kring$, considered as a $\kring \fb$-module supported on $\mathbf{0}$ and, for $0<k$, 
$$H(k) := \bigoplus_{n \geq k} S_{(n-k+1,1^{k-1})};$$
\item 
the element of the Grothendieck group of $\kring \fb$-modules 
$$\W(k):= \sum_{t \geq 0} (-1)^{t + k} [\sgn_{k+t}].$$ 
\end{enumerate}
\end{notation}

\begin{remark}
\ 
\begin{enumerate}
\item 
For $k>0$, $H(k)$ encodes the {\em hook representations} indexed by hook partitions with first column of length $k$ (which explains the choice of notation).
\item 
The $\kring \fb$-module $H(1)$ is isomorphic to $\overline{\triv}$, the submodule of $\triv$ supported on non-empty finite sets.
\end{enumerate}
\end{remark}

By inspection, one has the following:

\begin{lemma}
\label{lem:W_relations}
For $k \in \nat$,
\begin{enumerate}
\item 
$\W (k) (\n)=0$ for $n<k$ and $\W (k) (\mathbf{k}) =   [\sgn_k]$;
 \item 
$
\W (k) + \W (k+1) = [\sgn_k]$ 
 in the Grothendieck group of $\kring \fb$-modules.
\end{enumerate}
\end{lemma}

There is the following basic relation for the hook representations:

\begin{lemma}
\label{lem:H-relation}
For $k \in \nat$, one has 
$ 
H (k) \oplus H (k+1) \cong \sgn_k \odot \triv.
$ 
\end{lemma}

\begin{proof}
This is clear by inspection for $k=0$. 
 For $k>0$, by definition, 
\[
H (k) \oplus H (k+1)
=
\bigoplus_{n \geq k} \big( S_{(n-k+1,1^{k-1})} \oplus S_{(n-k, 1^k)} \big),
\]
where $S_{(n-k, 1^k)}$ is understood to be $0$ for $n=k$.  Now, for each $n \geq k$, $S_{(n-k+1,1^{k-1})} \oplus S_{(n-k, 1^k)} \cong \sgn_k \odot \triv_{n-k}$, by the Pieri rule (as in the proof of Proposition \ref{prop:invert_odot_triv}). The sum $ \bigoplus_{n \geq k} \sgn_k \odot \triv_{n-k}$ identifies with $\sgn_k \odot \triv$, by the definition of the latter.
\end{proof}

The invertibility of $- \odot \triv$ yields the following:

\begin{proposition}
\label{prop:Hook_inversion}
For $0< k \in \nat$, there is an equality in the Grothendieck group of $\kring \fb$-modules:
\[
[H(k)] \odot \W(0) 
= 
\W(k).
\]
\end{proposition}

\begin{proof}
The result is proved by induction on $k$, starting from the basic case $k=0$, which is immediate.

For the inductive step, Lemma \ref{lem:H-relation} gives 
\[
[H(k)] \odot \W(0)
\ + \ 
[H(k+1)] \odot \W(0)
= 
[\sgn_k \odot \triv] \odot \W(0) 
= 
[\sgn_k],
\]
using Proposition \ref{prop:invert_odot_triv} for the second equality, since $\odot $ is associative.
 Now, by the inductive hypothesis, we have $[H(k)] \odot \W(0) =\W(k)$, hence
\[
[H(k+1)] \odot \W(0)   = [\sgn_k] - \W(k).
\]
The right hand side is equal to $\W(k+1)$ by Lemma \ref{lem:W_relations}, as required. 
\end{proof}

\begin{remark}
The identity of Proposition \ref{prop:Hook_inversion} is equivalent to the equality $[H(k)] = \W(k) \odot [\triv]$ 
 in the Grothendieck group of $\kring \fb$-modules. This can be viewed as a `truncated' version of Proposition \ref{prop:invert_odot_triv}.
\end{remark}

\section{Calculating $\hom(\mathbb{P}_\bullet, F)$}
\label{sect:hom_mathbbP}

The purpose of this section is to show how to calculate $\hom_{\kring \fin} (\mathbb{P}_\bullet, F)$ in terms of the underlying $\kring \fb$-module of a $\kring \fin$-module $F$ that takes finite-dimensional values. This gives an explicit description of the multiplicities of the composition factors of such a $\kring \fin$-module (see Theorem \ref{thm:hom_proj_cover}).

This relies upon  Theorem \ref{thm:pfin_Lambda}, which  relates the following, considered as $\kring \fb\op \otimes \kring \fin$-modules:
\begin{eqnarray*}
\pfin_\bullet &:& \n \mapsto \pfin_\n \\
\pbar^{\otimes \bullet} &:& \n \mapsto \pbar^{\otimes n}.
\end{eqnarray*}

%%%%%%%%%%%%%%%%%%%%%%%%%%%%%%%%%%%%%%%%%%%%%%%%%%%%%%%%%%%%%%%%%%%%%
\subsection{Splitting off projective covers of exterior power functors}

Due to the exceptional behaviour of the simple functors $\Lambda^t (\pbar)$, for $t \in \nat$, one is lead to consider 
\begin{eqnarray*}
\pbar^{\otimes \bullet}/ \Lambda  &:& \n \mapsto \pbar^{\otimes n}/\Lambda^n (\pbar).
\end{eqnarray*}
We identify an analogous summand  of $\pfin_\n$ that eliminates the projective direct summands of the form $\Lambda^t (\pfin_\mathbf{1})$, using the following:

\begin{proposition}
\label{prop:project_Lambda_n}
For $n \in \nat$, there is a $\kring \sym_n\op$-equivariant surjection 
$ 
p_{\Lambda, \n}  :  \pfin_\n \twoheadrightarrow \pfin_{\Lambda, \n}
$ 
such that 
\begin{enumerate}
\item 
for $n=0$, $\pfin_{\Lambda, \mathbf{0}} = \pfin_\mathbf{0} = \kring$; 
\item 
for $n>0$, 
$
\pfin_{\Lambda, \n}
\cong 
\bigoplus_{l=1} ^n 
\Lambda^l (\pfin) \boxtimes S_{(n-l+1, 1^{l-1})}$,  
in particular it is projective; 
\item 
$\ker p_{\Lambda, \n}$ is projective and does not contain an indecomposable summand of the form $\Lambda^t (\pfin)$. 
\end{enumerate}
The morphism $p_{\Lambda, \n}$ is unique up to non-canonical isomorphism. 
\end{proposition}

\begin{proof}
The case $n=0$ is clear; henceforth suppose that $n>0$. The result follows using standard arguments from the fact that, for $k \in \nat$, $\Lambda^{k+1} (\pfin)$ is the projective cover of the simple functor $\Lambda^k (\pbar)$, by  Theorem \ref{thm:projective_cover_Lambda}.  Namely, for $n>0$, $\pfin_\n (\mathbf{0})$ is zero; hence $\Lambda^0 (\pfin) = \kring$ is not a direct summand of $\pfin_n$. For $l >0$, the `multiplicity' of $\Lambda^l (\pfin)$ as a direct summand of $\pfin_\n$ is given by 
$
\hom_{\kring \fin} (\pfin_\n , \Lambda^{l-1}(\pbar))$,  
which also records the $\kring \sym_n$-module structure of this component. 
The result thus follows from Proposition \ref{prop:evaluate_simples}. 
\end{proof}

\begin{notation}
Denote by 
\begin{enumerate}
\item 
$\pfin_\bullet / \Lambda $  the $\kring \fb\op \otimes \kring \fin$-module given by 
$ 
\n \mapsto 
\ker p_{\Lambda, \n}.
$ 
\item 
$\pbar^{\otimes \bullet}/ \Lambda$ the 
$\kring \fb\op \otimes \kring \fin$-module given by 
$ 
\n \mapsto 
\pbar^{\otimes n}/ (\sgn_n \boxtimes \Lambda^n (\pbar)),
$ 
where the $\sgn_n$ keeps track of the $\kring \sym_n\op$-action.
\end{enumerate}
\end{notation}

By construction, for each $n \in \nat$, one has a short exact sequence of $\kring \sym_n\op \otimes \kring \fin$-modules:
\[
0
\rightarrow 
(\pfin_\bullet / \Lambda)(\n)
\rightarrow 
\pfin_\n
\rightarrow 
\pfin_{\Lambda,\n}
\rightarrow 
0.
\]
This splits non-canonically.

\begin{thm}
\label{thm:pfin_Lambda}
There is an isomorphism $\pfin_\bullet /\Lambda 
\cong 
(\pbar^{\otimes \bullet}/ \Lambda) 
\odot _{\fb\op}
\triv$
 of $\kring \fb\op \otimes \kring \fin$-modules and thus
\[
\pfin_\bullet 
\cong 
\pfin_{\Lambda, \bullet}
\quad \oplus \quad
(\pbar^{\otimes \bullet}/ \Lambda) 
\odot _{\fb\op}
\triv.
\]
\end{thm}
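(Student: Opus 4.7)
The plan is to reduce the claimed isomorphism of $\kring\fb\op \otimes \kring\fin$-modules to a check at each $n$: since $\fb\op$ is a groupoid, an isomorphism of $\kring\fb\op \otimes \kring\fin$-modules is nothing more than a family of compatible $\sym_n\op$-equivariant isomorphisms of $\kring\fin$-modules, one for each $n \in \nat$. First I would observe that both sides are projective as $\kring\fin$-modules --- the left-hand side as a direct summand of $\pfin_\n$, and the right-hand side because each $\pbar^{\otimes k}/\Lambda^k(\pbar)$ is a direct sum of the projectives $S_\nu(\pbar)$ with $\nu \neq (1^k)$ (Theorem \ref{thm:projectives_pbar}), and the induction corresponding to $\odot_{\fb\op}\triv$ preserves projectivity of the underlying $\kring\fin$-module. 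Moreover, by construction, neither side contains a direct summand of the form $\Lambda^t(\pfin)$. By the classification of indecomposable projectives (Theorem \ref{thm:indecomposable_projectives}) and the semi-simplicity of $\kring\sym_n$, it therefore suffices to check that the multiplicity of $S_\nu(\pbar) \boxtimes S_\lambda$ agrees on both sides for each $\lambda \vdash n$ and each $\nu$ with $\nu \neq (1^{|\nu|})$.

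For the left-hand side, the Schur--Weyl decomposition $\pfin_\n \cong \bigoplus_{\lambda \vdash n} S_\lambda(\pfin) \boxtimes S_\lambda$ combined with Theorem \ref{thm:decompose_Slambda_Pfin}, after discarding the $\Lambda^j(\pfin)$ summands, gives multiplicity $1$ for $S_\nu(\pbar) \boxtimes S_\lambda$ precisely when $\nu \preceq \lambda$, $\lambda/\nu \in \hs$, and $\nu \neq (1^{|\nu|})$. For the right-hand side, I would begin with the isotypical decomposition
\[
\pbar^{\otimes k}/\Lambda^k(\pbar) \cong \bigoplus_{\nu \vdash k,\ \nu \neq (1^k)} S_\nu(\pbar) \boxtimes S_\nu
\]
as a $\kring\fin \otimes \kring\sym_k\op$-module, together with Pieri's rule $\mathrm{Ind}_{\sym_k \times \sym_{n-k}}^{\sym_n}(S_\nu \boxtimes \triv_{n-k}) \cong \bigoplus_{\nu \preceq \lambda \vdash n,\ \lambda/\nu \in \hs} S_\lambda$; summing over $k = 0, \ldots, n$ then yields the same multiplicity, proving the first isomorphism.

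The splitting $\pfin_\bullet \cong \pfin_{\Lambda,\bullet} \oplus (\pbar^{\otimes \bullet}/\Lambda) \odot_{\fb\op} \triv$ follows at once from the defining short exact sequence $0 \to \pfin_\bullet/\Lambda \to \pfin_\bullet \to \pfin_{\Lambda,\bullet} \to 0$, which splits by projectivity of $\pfin_{\Lambda,\bullet}$. The main obstacle is verifying that the multiplicities match cleanly: the right-hand side systematically excludes column partitions $\nu = (1^{|\nu|})$, while the left-hand side systematically removes $\Lambda^j(\pfin)$ summands, and these two exclusions must correspond exactly. They are reconciled by a combinatorial observation: a column partition $\nu = (1^m)$ can satisfy both $(1^m) \preceq \lambda$ and $\lambda/(1^m) \in \hs$ only when $\lambda$ is a hook of height $m$ or $m+1$. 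For non-hook $\lambda$ no such $\nu$ appears in the Pieri expansion, and for a hook $\lambda = (s, 1^{n-s})$ the excluded column partitions $(1^{n-s})$ and $(1^{n-s+1})$ are exactly the two whose associated simples $\Lambda^{n-s}(\pbar)$ and $\Lambda^{n-s+1}(\pbar)$ are the composition factors of the removed $\Lambda^{n-s+1}(\pfin)$ summand, by the defining short exact sequence of Theorem \ref{thm:projective_cover_Lambda}.
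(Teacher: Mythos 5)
Your proof is correct, but it takes a genuinely different route from the paper's. The paper's argument is short and structural: it invokes the natural filtration of $\pfin_\n$ from Proposition \ref{prop:filter_pfin_n}, whose subquotients are $\pbar^{\otimes k}\odot\triv_{n-k}$, observes that $\pbar^{\otimes k}/\Lambda^{k}(\pbar)$ is projective, and concludes that the projective part of the filtration splits off, giving the summand $(\pbar^{\otimes\bullet}/\Lambda)\odot_{\fb\op}\triv$ directly at the level of the filtration. No Pieri computation and no reference to Theorem \ref{thm:decompose_Slambda_Pfin} appear. Your argument instead performs a Krull--Schmidt multiplicity count: both sides are projective $\kring\fin$-modules with no $\Lambda^{t}(\pfin)$ summands, so it suffices to compare the multiplicity space of each $S_\nu(\pbar)$, which you read off on the left from the Schur--Weyl decomposition plus Theorem \ref{thm:decompose_Slambda_Pfin}, and on the right from Pieri's rule applied to the Day convolution; the final paragraph reconciles the two exclusion patterns (column partitions versus $\Lambda^{j}(\pfin)$ summands) for hook shapes. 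The paper itself anticipates your route: the remark immediately following the theorem states that Theorems \ref{thm:decompose_Slambda_Pfin} and \ref{thm:pfin_Lambda} can each be derived from the other, and yours is precisely the derivation in one of those two directions. What the paper's approach gains is logical economy, since it does not depend on Theorem \ref{thm:decompose_Slambda_Pfin} (and hence not on Proposition \ref{prop:evaluate_simples}), and it explains the isomorphism as a splitting of a natural filtration rather than as a coincidence of multiplicities. What your approach gains is an entirely explicit, term-by-term verification, which makes the relationship with Theorem \ref{thm:decompose_Slambda_Pfin} transparent and exhibits exactly how the hook/column bookkeeping works out. One small stylistic point: the word ``compatible'' in your opening reduction is superfluous --- since $\fb$ has no non-identity morphisms between objects of different cardinality, a $\kring\fb\op\otimes\kring\fin$-module is precisely an unconstrained sequence of $\kring\sym_n\op\otimes\kring\fin$-modules, so there is no additional compatibility to check.
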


\begin{proof}
This result refines Proposition \ref{prop:filter_pfin_n}. Replacing $\pbar^{\otimes k}$ by $\pbar ^{\otimes k}/\Lambda^k (\pbar)$ yields a projective $\kring \fin$-module. Hence, the filtration obtained from Proposition \ref{prop:filter_pfin_n}  splits. The result follows.
\end{proof}

\begin{remark}
The reader is encouraged to check that this isomorphism gives an alternative approach to proving Theorem \ref{thm:decompose_Slambda_Pfin} and vice versa.
\end{remark}

%%%%%%%%%%%%%%%%%%%%%%%%%%%%%%%%%%%%%%%%%%%%%%%%%%%%%%%%%%%%%%%%%%%%%%%%%%%%%%%%
\subsection{How to calculate $\hom_{\kring \fin} (\mathbb{P}_\bullet, -)$}
\label{subsect:calculate_hom_proj_cover}

Motivated by Corollary \ref{cor:multiplicity_composition_factors}, the purpose of this section is to calculate 
$\hom_{\kring \fin} (\mathbb{P}_\bullet , F)$
as a functor to $\kring \fb$-modules. 
For simplicity of exposition, we do not include the term $\mathbb{P}_{-1}$ (cf. Notation \ref{nota:calc_proj_cover}), since this  can be treated directly using Yoneda's lemma.

In the following, we use $\W(-)$ introduced in Notation \ref{nota:H_S}. 

\begin{thm}
\label{thm:hom_proj_cover}
For a $\kring \fin$-module $F$ such that $\dim F(\mathbf{t})< \infty$ for all $t \in \nat$, there is an equality in the Grothendieck group of $\kring \fb$-modules
\[
[\hom_{\kring \fin}  (\mathbb{P}_\bullet, F)]
=
[\overline{F}]\odot_\fb \W(0)
\quad + \quad
\sum_{k\geq 1}
(\sgn_k \otimes_{\sym_k} F(\mathbf{k})) \otimes \W (k-1), 
\]
where $\overline{F}$ is the sub $\kring \fin$-module of $F$ supported on non-empty finite sets.
\end{thm}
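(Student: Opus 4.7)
The plan is to treat separately the two summands of $\mathbb{P}_n = \Lambda^{n+1}(\pfin) \oplus (\pbar^{\otimes n}/\Lambda^n(\pbar))$ from Definition \ref{defn:proj_cover}. Set $L_n := \hom_{\kring \fin}(\Lambda^{n+1}(\pfin), F)$, $T_n := \hom_{\kring \fin}(\pbar^{\otimes n}/\Lambda^n(\pbar), F)$, and $e_k := \dim(\sgn_k \otimes_{\sym_k} F(\mathbf{k}))$, so that $[\hom_{\kring \fin}(\mathbb{P}_\bullet, F)] = [L_\bullet] + [T_\bullet]$ in the Grothendieck group of $\kring \fb$-modules. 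By Yoneda applied to $\pfin_{\mathbf{n+1}} \twoheadrightarrow \Lambda^{n+1}(\pfin)$, $L_n$ has dimension $e_{n+1}$. The $\sym_n$-action on $L_n$ coming from the $\sym_n\op$-action on $\mathbb{P}_n$ (Remark \ref{rem:right_actions}) is by the character $\sgn_n$: indeed, $\mathrm{End}_{\kring \fin}(\Lambda^{n+1}(\pfin)) = \kring$ by Yoneda, so $\sym_n$ acts by scalars, and the $\sym_n$-equivariance of the composite $\Lambda^{n+1}(\pfin) \hookrightarrow \mathbb{P}_n \twoheadrightarrow \Lambda^n(\pbar)$ forces these to realize $\sgn_n$ (using Lemma \ref{lem:connectivity_Lambda}). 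Hence $[L_\bullet] = \sum_{n \ge 0} e_{n+1} [\sgn_n]$.

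To compute $[T_\bullet]$, apply $\hom_{\kring \fin}(-, F)$ to the identification of Theorem \ref{thm:pfin_Lambda}, namely $\pfin_\bullet \cong \pfin_{\Lambda, \bullet} \oplus (\pbar^{\otimes \bullet}/\Lambda) \odot_{\fb\op} \triv$. Yoneda identifies the LHS with $F$ regarded as a $\kring \fb$-module. Proposition \ref{prop:project_Lambda_n} and Notation \ref{nota:H_S} give $[\hom_{\kring \fin}(\pfin_{\Lambda, \bullet}, F)] = e_0 [\kring_\mathbf{0}] + \sum_{l \ge 1} e_l [H(l)]$, while Frobenius reciprocity applied term-by-term yields $[\hom_{\kring \fin}((\pbar^{\otimes \bullet}/\Lambda) \odot_{\fb\op} \triv, F)] = [T_\bullet] \odot [\triv]$ (the key point is that $\triv$ carries only trivial symmetric group actions, so $\hom_{\kring \fin}(-,F)$ converts the $\odot_{\fb\op}\triv$ on the domain into $\odot \triv$ on the $\kring \fb$-module side). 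Equating and isolating the component at $\mathbf{0}$ yields
\[
[\overline F] = \sum_{l \ge 1} e_l [H(l)] + [T_\bullet] \odot [\triv].
\]

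Convolving with $\W(0)$ and invoking Propositions \ref{prop:invert_odot_triv} (so $[\triv] \odot \W(0) = [\kring_\mathbf{0}]$ is the unit for $\odot$) and \ref{prop:Hook_inversion} (so $[H(l)] \odot \W(0) = \W(l)$) produces $[T_\bullet] = [\overline F] \odot \W(0) - \sum_{l \ge 1} e_l \W(l)$. Combining with $[L_\bullet]$ re-indexed as $\sum_{k \ge 1} e_k [\sgn_{k-1}]$ and using Lemma \ref{lem:W_relations}(2) in the form $[\sgn_{k-1}] - \W(k) = \W(k-1)$ yields the stated formula. The two subtle points are (i) the identification of the $\sym_n$-action on $\Lambda^{n+1}(\pfin) \subset \mathbb{P}_n$ as $\sgn_n$, needed so that $[L_\bullet]$ has the correct $\kring \fb$-module structure, and (ii) the careful bookkeeping of the $\mathbf{0}$-level contribution, which is exactly what forces $\overline F$ rather than $F$ to appear in the final formula.
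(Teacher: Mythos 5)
Your proposal is correct and follows essentially the same route as the paper's own proof: decompose $\mathbb{P}_n$ into the two summands $\Lambda^{n+1}(\pfin)$ (twisted by $\sgn_n$) and $\pbar^{\otimes n}/\Lambda^n(\pbar)$, compute the first contribution directly via Yoneda, reduce the second to $[\overline F]\odot_\fb\W(0)$ by inverting $-\odot\triv$ against Theorem \ref{thm:pfin_Lambda} and Proposition \ref{prop:Hook_inversion}, and recombine using $\W(k-1)=[\sgn_{k-1}]-\W(k)$. The only addition over the paper is your explicit justification of the $\sgn_n$-twist on the $\Lambda^{n+1}(\pfin)$ summand (which the paper asserts in the decomposition without separate argument); there the cleanest route is that $\Lambda^n(\pbar)$ sits inside $\pbar^{\otimes n}$ as the $S_{(1^n)}$-isotypical piece, so carries the $\sgn_n$-twist under place permutation, rather than via Lemma \ref{lem:connectivity_Lambda}, which concerns the $\sym_{n+1}$-action on $\Lambda^n(\pbar)(\mathbf{n+1})$ from the $\kring\fin$-structure and is not directly about the auxiliary $\sym_n\op$-action.
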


\begin{proof}
We require to calculate (up to isomorphism) the $\kring \fb$-module $\hom_{\kring \fin}  (\mathbb{P}_\bullet, F)$; evaluated on $\n$ this has value $\hom_{\kring \fin} (\mathbb{P}_n ,F)$, where the $\kring \sym_n$-action is induced by the $\kring \sym_n\op$-action on $\mathbb{P}_n$.  

For any $n \in \nat$, we have the decomposition  of $\mathbb{P}_n$ in $\kring \fin \otimes \kring \sym_n\op$-modules:
\[
\mathbb{P}_n \cong \pbar ^{\otimes n} / \Lambda^n (\pbar) \oplus \Lambda^{n+1} (\pfin) \boxtimes \sgn_n.
\]
As in Example \ref{exam:multiplicity_Lambda}, it is straightforward to calculate  $\hom_{\kring \fin} (\Lambda^{n+1}(\pfin), F)$. The key step is thus to calculate the $\kring \sym_n$-module 
$
\hom_{\kring \fin} (\pbar^{\otimes n}/\Lambda^n (\pbar) , F).
$

Theorem \ref{thm:pfin_Lambda} gives the isomorphism $
\pfin_\bullet /\Lambda 
\cong 
(\pbar^{\otimes \bullet}/ \Lambda) 
\odot_{\fb\op} 
\triv
$ of $\kring \fb\op \otimes \kring \fin$-modules and hence  
 the isomorphism
\[
\hom_{\kring \fin} (\pfin_\bullet /\Lambda , F) \cong \hom_{\kring \fin} (\pbar^{\otimes \bullet}/ \Lambda, F) \odot_\fb \triv.
\]
By  Proposition \ref{prop:invert_odot_triv}, this gives the equality in the Grothendieck group of $\kring \fb$-modules:
\[
[\hom_{\kring \fin} (\pbar^{\otimes \bullet}/ \Lambda, F) ]
= 
[\hom_{\kring \fin} (\pfin_\bullet /\Lambda , F)] \odot _\fb \W(0).
\]
Now, since $[\hom_{\kring \fin} (\pfin_\bullet, F)]= [F]$ by Yoneda's lemma, this gives 
\[
[\hom_{\kring \fin} (\pfin_\bullet /\Lambda , F)] = [F] - [\hom_{\kring \fin} (\pfin_{\Lambda, \bullet}, F)].
\]
By Proposition \ref{prop:project_Lambda_n}, if $n>0$ we have
$
\pfin_{\Lambda, \n}
\cong 
\bigoplus_{k=1} ^n 
\Lambda^k (\pfin) \boxtimes S_{(n-k+1, 1^{k-1})}$; for $n=0$,  $ \pfin_{\Lambda, \mathbf{0} } = \pfin_\mathbf{0} = \kring$. 
This gives the equality  
\[
[\hom_{\kring \fin} (\pfin_{\Lambda, \bullet}, F)]
= [F(0) \otimes \triv_0] + \sum_{n>0} \sum_{k=1}^n [\big(\sgn_k \otimes_{\sym_k}F (\mathbf{k})\big) \otimes S_{(n-k+1, 1^{k-1})}]. 
\]
Reversing the order of summation in $k$ and $n$ gives the equality 
\[
[\hom_{\kring \fin} (\pfin_{\Lambda, \bullet}, F)]
= \sum_{k \geq 0}  [\big(\sgn_k \otimes_{\sym_k}F (\mathbf{k})\big) \otimes H(k)], 
\]
where $H(k)$ is as in Notation \ref{nota:H_S} (the term $k=0$ is subsumed in the sum).

Then, combining the above and using Proposition \ref{prop:Hook_inversion} gives the equality
\[
[\hom_{\kring \fin} (\pbar^{\otimes \bullet} /\Lambda , F)]
= 
[F] \odot_\fb \W(0) 
\quad - \quad 
\sum_{k \geq 0}  \big(\sgn_k \otimes_{\sym_k}F (\mathbf{k})\big) \otimes \W(k).
\]
The term $k=0$ of the sum is equal to $F(0) \odot_\fb \W(0)$, so this can be rewritten as 
\[
[\hom_{\kring \fin} (\pbar^{\otimes \bullet} /\Lambda , F)]
= 
[\overline{F}] \odot_\fb \W(0)
\quad - \quad 
\sum_{k \geq 1}  \big(\sgn_k \otimes_{\sym_k}F (\mathbf{k})\big) \otimes \W(k).
\]

It remains to calculate $[\hom_{\kring \fin} (\mathbb{P}_\bullet , F)]$ by reintegrating the contribution from
\begin{eqnarray*}
\hom_{\kring \fin} (\Lambda^{\bullet +1 } (\pfin) , F)
&\cong  &
\bigoplus _{m\geq 0} (\sgn_{m+1} \otimes_{\sym_{m+1}} F(\mathbf{m+1}) )
\ \otimes \ \sgn_m
\\
&=& \bigoplus _{k\geq 1} (\sgn_{k} \otimes_{\sym_{k}} F(\mathbf{k}) )
\ \otimes \ \sgn_{k-1}.
\end{eqnarray*}
Using the identity $\W (k-1) =[\sgn_{k-1}] -  \W (k)$ for $k \geq 1$ (cf. Lemma \ref{lem:W_relations}), the result follows by summing the requisite expressions.
\end{proof}

\section{The composition factors of $\pfin_\n$ and $\pbar^{\otimes n}$}
\label{sect:cfactors}

In this section Theorem \ref{thm:hom_proj_cover} is applied to calculate $\hom_{\kring \fin} (\mathbb{P}_\bullet, F)$, first in the case $F=\pfin_\n$ (see Theorem \ref{thm:hom_proj_cover_pfin}), and then for $F=\pbar^{\otimes n}$ (see Theorem \ref{thm:hom_proj_cover_pbar_otimes}). 
 The first step is the calculation of  $\hom_{\kring \fin}(\Lambda^t (\pfin), \pfin_\n)$, which is achieved in Proposition \ref{prop:calculate_sgn_otimes_Pfin}. 

%%%%%%%%%%%%%%%%%%%%%%%%%%%%%%%%%%%%%%%%%%%%%%%%%%%%%%%%%%%%%%%%%%%%%%%%%
\subsection{Calculating $\hom (\Lambda^t (\pfin) , \pfin_\n)$}

For a fixed $t \in \nat$, we consider
$$
\hom_{\kring \fin} (\Lambda^t (\pfin) , \pfin_\n)
\cong 
\sgn_t \otimes_{\sym_t} \pfin_\n (\mathbf{t})
$$
 as a $\kring \fin\op$-module (using naturality with respect to $\n$) and hence, by restriction along $\kring \fs \op \hookrightarrow \kring \fin\op$, as a $\kring \fs\op$-module.

\begin{lemma}
\label{lem:sgn_otimes_Pfin}
There is an isomorphism of $\kring \fs\op$-modules
\[
\sgn_t \otimes_{\sym_t} \pfin_\n (\mathbf{t})
\cong 
\sgn_t \otimes_{\sym_t} \kring \finj (-, \mathbf{t}) \otimes_\fb \kring \fs (\n, -)
\]
where the $\fs\op$-naturality is with respect to $\n$.
\end{lemma}

\begin{proof}
By definition, $\pfin_\n (\mathbf{t}) = \kring \fin (\n, \mathbf{t})$. Now, 
as in Example \ref{exam:kfin}, one has the isomorphism of $\kring \fb\otimes \kring \fs\op$-modules
\[
\kring \fin (\n, \mathbf{t}) \cong \kring \finj (-, \mathbf{t}) \otimes_\fb \kring \fs (\n, -),
\]
where the $\kring \fs\op$-functoriality is with respect to $\n$. The result follows upon applying $\sgn_t \otimes_{\sym_t} -$.
\end{proof}

One has the following identification:

\begin{lemma}
\label{lem:identify_sgn_otimes_kfinj}
For $s, t \in \nat$, there is an isomorphism of $\kring \sym_s\op$-modules:
\[
\sgn_t \otimes_{\sym_t} \kring \finj (\mathbf{s}, \mathbf{t})
\cong 
\left\{
\begin{array}{ll}
\sgn_s & s \in \{t-1, t\} \cap \nat \\
0 & \mbox{otherwise.}
\end{array}
\right.
\]
\end{lemma}

\begin{proof}
Clearly $\kring \finj (\mathbf{s}, \mathbf{t})=0$ if $s>t$, so we may assume that $s \leq t$.  Now, as in  the proof of Proposition \ref{prop:identify_induction}, there is an isomorphism of $\kring (\sym_t \times \sym_s\op)$-modules $\kring \finj (\mathbf{s}, \mathbf{t}) \cong \kring \sym_t / \sym_{t-s}$, where the actions are inherited from the left and right regular actions on $\kring \sym_t$. It follows that one has the isomorphism
$ 
\sgn_t \otimes_{\sym_t} \kring \finj (\mathbf{s}, \mathbf{t})
\cong 
\sgn_t|_{\sym_{t-s}} \otimes_{\sym_{t-s}} \triv_{t-s}
$ 
with respect to the residual $\kring \sym_s$-action arising from $\sgn_t|_{\sym_s}$ on the right hand side. This is clearly zero unless $t-s \in \{0, 1\}$ in which case the representation identifies as stated.
\end{proof}

This allows the main result of this subsection to be proved:

\begin{proposition}
\label{prop:calculate_sgn_otimes_Pfin}
For $t \in \nat$, there is a short exact sequence of $\kring \fin\op$-modules:
 \[
 0 
 \rightarrow 
 \sgn_{t-1} \otimes_{\sym_{t-1}} \kring \fs (\bullet, \mathbf{t-1})
 \rightarrow 
  \sgn_t \otimes_{\sym_t} \pfin_\bullet (\mathbf{t})
\rightarrow 
 \sgn_{t} \otimes_{\sym_{t}} \kring \fs (\bullet, \mathbf{t})
 \rightarrow 
 0,
 \]
using the $\kring \fin\op$-module structure for $\kring \fs (\bullet, \mathbf{t-1})$ and $\kring \fs (\bullet, \mathbf{t})$ provided by Proposition \ref{prop:pfsop_finop-module}. (For $t=0$, the term $
 \sgn_{t-1} \otimes_{\sym_{t-1}} \kring \fs (\bullet, \mathbf{t-1})$ is understood to be zero.)
 This short exact sequence  splits after restriction to $\kring \fs\op \subset \kring \fin\op$.
\end{proposition}

\begin{proof}
The surjection $\pfin_\bullet (\mathbf{t}) \twoheadrightarrow \kring \fs (\bullet, \mathbf{t})$ of Proposition \ref{prop:pfsop_finop-module} is $\kring \sym_t \op$-equivariant, hence induces the surjection $ \sgn_t \otimes_{\sym_t} \pfin_\bullet (\mathbf{t})
\rightarrow 
 \sgn_{t} \otimes_{\sym_{t}} \kring \fs (\bullet, \mathbf{t})$ in the short exact sequence. 
It remains to identify the kernel. As a $\kring \fs\op$-module, this follows from Lemmas \ref{lem:sgn_otimes_Pfin} and \ref{lem:identify_sgn_otimes_kfinj}. One checks that the $\kring \fin\op$-module structure is as stated. 
\end{proof}

\begin{remark}
This result is related to the short exact sequence given in Corollary \ref{cor:ses_llop}.
\end{remark}

\begin{example}
 For $t=1$, $\n \mapsto \pfin_\n (\mathbf{1})$ identifies as the constant $\kring \fin\op$-module $\kring^{\fin\op}$ (the superscript indicates that this is a $\kring \fin\op$-module),  hence so does $\n \mapsto \sgn_1 \otimes_{\sym_1} \pfin_\n (\mathbf{1})$.
 The left hand term in the short exact sequence  contributes $\kring_\mathbf{0}^{\fin\op}$  and  the right hand term   $\kbar^{\fin\op}$ as $\kring \fin\op$-modules. Thus the short exact sequence  is 
\[
0
\rightarrow 
\kring_{\mathbf{0}}^{\fin\op} 
\rightarrow \kring^{\fin\op}
\rightarrow 
\kbar^{\fin\op}
\rightarrow 
0.
\]
This is not split, but splits on restriction to $\kring \fs\op$.
\end{example}

%%%%%%%%%%%%%%%%%%%%%%%%%%%%%%%%%%%%%%%%%%%%%%%%%%%%%%%%%%%%%%%%%%%%%%%%%%%%%%%
\subsection{Morphisms from $\mathbb{P}_\bullet$ to   $\pfin_\n$ and to $\pbar^{\otimes n}$}

Proposition \ref{prop:calculate_sgn_otimes_Pfin} identified $\sgn_k \otimes _{\sym_k} \pfin_\n (\mathbf{k}) $.
This gives the following consequence of Theorem \ref{thm:hom_proj_cover}:

\begin{thm}
\label{thm:hom_proj_cover_pfin}
For $n \in \nat$, there is an  equality in the Grothendieck group of $ \kring \sym_n\op \otimes   \kring \fb$-modules
\[
[\hom_{\kring \fin}  (\mathbb{P}_\bullet, \pfin_\n)]
=
[\kring \fs (\n, -)]  
\quad + \quad
\sum_{k\geq 1}
(\sgn_k \otimes_{\sym_k} \kring \fs (\n , \mathbf{k})) \otimes [\sgn_{k-1}].
\]
\end{thm}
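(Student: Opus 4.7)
The plan is to apply Theorem \ref{thm:hom_proj_cover} with $F = \pfin_\n$ and then simplify each of the two summands on the right-hand side using tools already in place. The $\kring \sym_n\op$-equivariance is automatic since every step is natural in $\pfin_\n$ via the canonical right action.

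First, I would compute $[\overline{\pfin_\n}] \odot_\fb \W(0)$. Combining Example \ref{exam:kfin} with Proposition \ref{prop:identify_induction}, as a $\kring \fb$-module in the target variable one has $\pfin_\n \cong \triv \odot \kring\fs(\n,-)$. For $n > 0$, $\pfin_\n(\mathbf{0}) = 0$, so $\overline{\pfin_\n} = \pfin_\n$, and by associativity of $\odot$ together with Proposition \ref{prop:invert_odot_triv},
\[
[\overline{\pfin_\n}] \odot_\fb \W(0) \;=\; [\triv] \odot \W(0) \odot [\kring\fs(\n,-)] \;=\; [\kring_\mathbf{0}] \odot [\kring\fs(\n,-)] \;=\; [\kring\fs(\n,-)].
\]
This recovers the first term in the statement.

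Second, for the sum $\sum_{k \geq 1} (\sgn_k \otimes_{\sym_k} \pfin_\n(\mathbf{k})) \otimes \W(k-1)$, abbreviate $a_k := \sgn_k \otimes_{\sym_k} \kring\fs(\n,\mathbf{k})$. Proposition \ref{prop:calculate_sgn_otimes_Pfin} gives, in the Grothendieck group of $\kring\sym_n\op$-modules, the identity $[\sgn_k \otimes_{\sym_k} \pfin_\n(\mathbf{k})] = [a_{k-1}] + [a_k]$ (with the convention $a_{-1} = 0$). Substituting and reindexing the resulting double sum,
\[
\sum_{k \geq 1} \bigl( [a_{k-1}] + [a_k] \bigr) \otimes \W(k-1) \;=\; [a_0]\otimes \W(0) \;+\; \sum_{k \geq 1} [a_k] \otimes \bigl( \W(k-1) + \W(k) \bigr).
\]
Lemma \ref{lem:W_relations} rewrites $\W(k-1) + \W(k) = [\sgn_{k-1}]$, and for $n > 0$ one has $a_0 = 0$ since $\fs(\n,\mathbf{0}) = \emptyset$; the right-hand sum collapses to $\sum_{k \geq 1} [a_k] \otimes [\sgn_{k-1}]$, matching the statement.

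The one genuinely delicate point is the edge case $n = 0$, where $\overline{\pfin_\mathbf{0}} = \kbar$ differs from $\pfin_\mathbf{0} = \triv$ by $[\kring_\mathbf{0}]$, introducing a correction of $-[\kring_\mathbf{0}] \odot \W(0) = -\W(0)$ in the first summand; simultaneously, $a_0 = \kring$ contributes $[a_0] \otimes \W(0) = \W(0)$ to the second. These two corrections cancel, and since $\kring\fs(\mathbf{0},-) = \kring_\mathbf{0}$ with $a_k = 0$ for all $k \geq 1$, both sides reduce to $[\kring_\mathbf{0}]$, verifying the formula in this case too. Beyond this boundary bookkeeping, the proof is essentially a direct computation once Proposition \ref{prop:calculate_sgn_otimes_Pfin} and the telescoping identity of Lemma \ref{lem:W_relations} are combined with Theorem \ref{thm:hom_proj_cover}, so no deeper obstacle arises.
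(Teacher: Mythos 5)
Your proof is correct and follows essentially the same route as the paper's: apply Theorem \ref{thm:hom_proj_cover} with $F = \pfin_\n$, evaluate the first summand via Example \ref{exam:kring_fin_odot} and Proposition \ref{prop:invert_odot_triv}, and telescope the second summand using Proposition \ref{prop:calculate_sgn_otimes_Pfin} together with the relation $\W(k-1)+\W(k)=[\sgn_{k-1}]$ from Lemma \ref{lem:W_relations}. The only cosmetic difference is that the paper sets aside $n=0$ ``by inspection'' and runs the telescoping only for $n>0$ (where $M(\n,\mathbf{0})=0$), whereas you carry the $[a_0]\otimes\W(0)$ term explicitly and verify that the $n=0$ corrections cancel; this is a slightly more uniform bookkeeping of the same computation, not a different argument.
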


\begin{proof}
The case $n=0$ is proved by inspection. Henceforth suppose that $n>0$.
 By Theorem \ref{thm:hom_proj_cover}, one has
\[
[\hom_{\kring \fin}  (\mathbb{P}_\bullet, \pfin_\n)]
=
[\overline{\pfin_\n}]\odot_\fb \W(0)
\quad + \quad
\sum_{k\geq 1}
(\sgn_k \otimes_{\sym_k} \pfin_n(\mathbf{k})) \otimes \W (k-1), 
\]
The hypothesis $n>0$ ensures that $\overline{\pfin_\n} = \pfin_\n$. Thus, using the identity as $\kring \fb$-bimodules 
$\kring \fin \cong \triv \odot_\fb \kring \fs \cong \kring \fs \odot _\fb \triv$ 
 given in Example \ref{exam:kring_fin_odot}, together with Proposition \ref{prop:invert_odot_triv}, one has $[\overline{\pfin_\n}]\odot_\fb \W(0) = [\kring \fs (\n, -)]$, which accounts for the first term in the statement.
 
It remains to consider the second term.  For the purposes of this proof, write
$
M(\n, \mathbf{k}) := \sgn_{k} \otimes _{\sym_{k}} \kring \fs (\n, \mathbf{k})
$ 
so that $M(-, \mathbf{k})$ is a $\kring \fb\op$-module. 
The hypothesis $n>0$ gives that $M(\n , \mathbf{0}) =0$ and clearly $M(\n, \mathbf{k})= 0$ if $k>n$.
 Using this notation, Proposition \ref{prop:calculate_sgn_otimes_Pfin} gives the isomorphism of $\kring \sym_n\op$-modules 
\[
\sgn_k \otimes _{\sym_k} \pfin_\n (\mathbf{k}) 
\cong 
M (\n, \mathbf{k-1}) \ \oplus \ M (\n, \mathbf{k}).
\]
Hence 
\begin{eqnarray*}
\sum_{k\geq 1}
(\sgn_k \otimes_{\sym_k} \pfin_n(\mathbf{k})) \otimes \W (k-1)
&=&
\sum_{k \geq 1} 
M (\n, \mathbf{k}) \otimes (\W (k-1) \oplus \W (k) )
\\
\mbox{\tiny{(Lemma \ref{lem:W_relations})}}
&=&
\sum_{k \geq 1} 
M (\n, \mathbf{k}) \otimes [\sgn_{k-1}],
\end{eqnarray*}
where the first equality reindexes the sum. The result follows.
\end{proof}

The conclusion of Theorem \ref{thm:hom_proj_cover} only depends upon the underlying $\kring \fb$-module of the $\kring \fin$-module considered. This allows us to apply Proposition \ref{prop:filter_pfin_n}, which has the following consequence:

\begin{lemma}
\label{lem:pfin_versus_pbar_otimes}
There is an isomorphism $\overline{\pfin_\bullet}
\cong 
\pbar^{\otimes \bullet} \odot_{\fb\op} \triv$ as 
 $\kring \fb$-bimodules, 
where $\overline{\pfin_\n} = \pfin_\n$ unless $n=0$. 
\end{lemma}

Hence, using Proposition \ref{prop:invert_odot_triv}, one can calculate $[\hom_{\kring \fin}  (\mathbb{P}_\bullet, \pbar^{\otimes *})]$:

\begin{thm}
\label{thm:hom_proj_cover_pbar_otimes}
There is an equality in the Grothendieck group of $\kring \fb$-bimodules
\begin{eqnarray*}
[\hom_{\kring \fin}  (\mathbb{P}_\bullet, \pbar^{\otimes *})]
&=&
[\kring \fs ]\odot_{\fb\op} \W(0) 
\\
&&  + \ 
\sum_{k\geq 1}
\Big([\sgn_k \otimes_{\sym_k}\kring \fs(-, \mathbf{k})]\odot_{ \fb\op} \W(0) \Big)  \boxtimes [\sgn_{k-1}]_\fb,
\end{eqnarray*}
where the place-holder $\bullet$ corresponds to the $\kring\fb$-module structure and $*$ to the $\kring \fb\op$-module structure.
\end{thm}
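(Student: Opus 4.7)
The plan is to apply Theorem \ref{thm:hom_proj_cover} with $F = \pbar^{\otimes n}$, letting $n$ vary to produce the $\kring\fb\op$-module structure in $*$, and then to convert each of the two resulting terms by invoking Lemma \ref{lem:pfin_versus_pbar_otimes} combined with the inversion Proposition \ref{prop:invert_odot_triv}. Since $\pbar$ vanishes on the empty set and $\pbar^{\otimes 0} = \kbar$ is supported on non-empty finite sets by Convention \ref{conv:pbar_tensor_0}, one has $\overline{\pbar^{\otimes *}} = \pbar^{\otimes *}$, and Theorem \ref{thm:hom_proj_cover} gives
\[
[\hom_{\kring\fin}(\mathbb{P}_\bullet, \pbar^{\otimes *})] = [\pbar^{\otimes *}] \odot_\fb \W(0) \ + \ \sum_{k \geq 1} [\sgn_k \otimes_{\sym_k} \pbar^{\otimes *}(\mathbf{k})] \boxtimes \W(k-1).
\]

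For the first term, Lemma \ref{lem:pfin_versus_pbar_otimes} and Proposition \ref{prop:invert_odot_triv} yield $[\pbar^{\otimes *}] = [\overline{\pfin_*}] \odot_{\fb\op} \W(0)$. Since $\odot_\fb$ and $\odot_{\fb\op}$ act on disjoint bimodule variables they commute; combining this with Example \ref{exam:kring_fin_odot} and Proposition \ref{prop:invert_odot_triv} (which together give $[\kring\fin] \odot_\fb \W(0) = [\kring\fs]$) reduces the first term to $[\kring\fs] \odot_{\fb\op} \W(0)$ modulo a correction arising from the discrepancy $\overline{\pfin_\mathbf{0}} = \kbar \neq \kring = \pfin_\mathbf{0}$. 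Tracking this discrepancy, which contributes $\kring_\mathbf{0} \boxtimes \kring_\mathbf{0}$ as a bimodule concentrated at $* = \mathbf{0}$, a direct computation identifies the correction as $-\W(0)_* \boxtimes \W(0)_\bullet$.

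For the sum, since $k \geq 1$ one has $\overline{\pfin_*}(\mathbf{k}) = \pfin_*(\mathbf{k})$, so Proposition \ref{prop:calculate_sgn_otimes_Pfin} (pushed through the $\odot_{\fb\op} \W(0)$ operation) gives $[\sgn_k \otimes_{\sym_k} \pbar^{\otimes *}(\mathbf{k})] = B_{k-1} + B_k$, where $B_j := [\sgn_j \otimes_{\sym_j} \kring\fs(-, \mathbf{j})] \odot_{\fb\op} \W(0)$. Reindexing $\sum_{k \geq 1}(B_{k-1} + B_k) \boxtimes \W(k-1)$ and invoking the identity $\W(j) + \W(j-1) = [\sgn_{j-1}]$ from Lemma \ref{lem:W_relations} transforms the sum into $B_0 \boxtimes \W(0) + \sum_{j \geq 1} B_j \boxtimes [\sgn_{j-1}]$. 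Since $\kring\fs(-, \mathbf{0}) \cong \kring_\mathbf{0}$ is a unit for $\odot_{\fb\op}$ one has $B_0 = \W(0)$, and the resulting $\W(0) \boxtimes \W(0)$ precisely cancels the correction term from the previous paragraph, leaving the announced formula. The only genuinely subtle point in this derivation is this cancellation of the $* = \mathbf{0}$ boundary contribution; everything else is bookkeeping.
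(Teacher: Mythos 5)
Your argument is correct, but it runs in the opposite order from the paper's. The paper first uses Lemma \ref{lem:pfin_versus_pbar_otimes} together with Proposition \ref{prop:invert_odot_triv} to write $[\hom_{\kring\fin}(\mathbb{P}_\bullet,\pbar^{\otimes *})] = [\hom_{\kring\fin}(\mathbb{P}_\bullet,\overline{\pfin_*})]\odot_{\fb\op}\W(0)$, and then simply substitutes the formula of Theorem \ref{thm:hom_proj_cover_pfin}. That is a two-line proof, and the boundary at $*=\mathbf{0}$ never appears because Theorem \ref{thm:hom_proj_cover_pfin} already absorbs it (the $n=0$ case there is treated by inspection, and with $\mathbb{P}_{-1}$ excluded one in fact has $[\hom(\mathbb{P}_\bullet,\pfin_\mathbf{0})]=[\hom(\mathbb{P}_\bullet,\kbar)]$). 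You instead apply Theorem \ref{thm:hom_proj_cover} directly with $F=\pbar^{\otimes *}$, and then invert $\odot_{\fb\op}\triv$ term by term: this forces you to separate $[\overline{\pfin_*}]$ from $[\pfin_*]=[\kring\fin]$, producing the $[\kring_\mathbf{0}\boxtimes\kring_\mathbf{0}]$ discrepancy, and you then have to observe that its contribution $-\W(0)_*\boxtimes\W(0)_\bullet$ is cancelled by the $B_0\boxtimes\W(0)$ term of the reindexed sum. Both routes rest on the same three ingredients (Theorem \ref{thm:hom_proj_cover}, Proposition \ref{prop:calculate_sgn_otimes_Pfin}, and the inversion machinery), so the mathematics is equivalent; the paper's ordering simply packages the boundary bookkeeping inside a result already proved, while yours exposes it as an explicit cancellation. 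Your version is longer but has the pedagogical virtue of making visible exactly where the $*=\mathbf{0}$ subtlety lives. One small point worth spelling out more: when you assert that $\odot_\fb$ and $\odot_{\fb\op}$ "commute since they act on disjoint bimodule variables," and again when you push Proposition \ref{prop:calculate_sgn_otimes_Pfin} through $\odot_{\fb\op}\W(0)$, you are implicitly using that $\sgn_k\otimes_{\sym_k}(-)$ and $-\odot_{\fb\op}\triv$ commute (they act on different slots of $\pbar^{\otimes *}(\mathbf{k})$). This is true, but stating it would tighten the argument.
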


\begin{proof}
By Lemma \ref{lem:pfin_versus_pbar_otimes} together with Proposition \ref{prop:invert_odot_triv}, one has the equality
\[
[\hom_{\kring \fin} (\mathbb{P}_\bullet, \pbar^{\otimes *})]
= 
 [\hom_{\kring \fin} (\mathbb{P}_\bullet,\overline{\pfin_*})]\odot_{\fb\op} \W(0) .
\]
The result follows by using the identification of  $[\hom_{\kring \fin} (\mathbb{P}_\bullet,\overline{\pfin_*})]$  given by Theorem \ref{thm:hom_proj_cover_pfin}.
\end{proof}

Theorem \ref{thm:hom_proj_cover_pbar_otimes} has the following consequence:

\begin{corollary}
\label{cor:endo_proj_cover}
There is an equality in the Grothendieck group of $\kring \fb$-bimodules:
\begin{eqnarray*}
[\hom_{\kring \fin}  (\mathbb{P}_\bullet, \mathbb{P}_*)]
&=&
[\hom_{\kring \fin}  (\mathbb{P}_\bullet, \pbar^{\otimes *} )]
\ + \  
\sum_{\ell \geq 0}
[\sgn_{\ell}]_{\fb\op} \boxtimes [\sgn_{\ell +1}]_\fb
\\
&=&
[\kring \fs ]\odot_{\fb\op} \W(0) 
\\
&& + \quad 
\sum_{k\geq 1}
\big([\sgn_k \otimes_{\sym_k}\kring \fs(-, \mathbf{k})]\odot_{ \fb\op} \W(0) \big)  \boxtimes [\sgn_{k-1}]_\fb
\\
&&
+ \quad
 \sum_{\ell \geq 0}
[\sgn_{\ell}]_{\fb\op} \boxtimes [\sgn_{\ell +1}]_\fb.
\end{eqnarray*}
\end{corollary}

\begin{proof}
The result follows by using the short exact sequences  for $\ell \geq 0$
\[
0
\rightarrow 
\Lambda^{\ell +1 }(\pbar) \boxtimes \sgn_{\ell}
\rightarrow 
\mathbb{P}_{\ell}
\rightarrow 
\pbar^{\otimes \ell}
\rightarrow 
0,
\]
keeping track of the $\sym_\ell\op$-action, as indicated.
 Clearly $\hom_{\kring \fin}(\mathbb{P}_\bullet, \Lambda^{\ell +1 }(\pbar))$ is non-zero if and only if $\bullet = \ell +1$. This accounts for the additional terms as compared with Theorem \ref{thm:hom_proj_cover_pbar_otimes}.
\end{proof}

\section{Calculating $\hom (\pbar^{\otimes \bullet}, \pbar^{\otimes *})$}
\label{sect:calculate}

The purpose of this section is to calculate the underlying $\kring \fb$-bimodule of 
$$
\hom_{\kring \fin} (\pbar^{\otimes \bullet}, \pbar^{\otimes *}).
$$ 
 Theorem \ref{thm:hom_proj_cover_pbar_otimes} described $[\hom_{\kring \fin}  (\mathbb{P}_\bullet, \pbar^{\otimes *})]$ in the Grothendieck group of $\kring \fb$-bimodules. We exploit the short exact sequence 
\[
0
\rightarrow 
\Lambda^{\bullet +1} (\pbar) 
\rightarrow 
\mathbb{P}_\bullet 
\rightarrow 
\pbar^{\otimes \bullet}
\rightarrow 
0.
\]

We first consider the calculation of   $\hom_{\kring \fin} (\Lambda^{\bullet +1} (\pbar), -)$;  a general result is given in Proposition \ref{prop:complex_Lambda_description} and the case of interest for the applications in Proposition \ref{prop:hom_Lambda_Pfin}. Then these results are put together so as to prove the main result, Theorem \ref{thm:endo_pbar_otimes}.

%%%%%%%%%%%%%%%%%%%%%%%%%%%%%%%%%%%%%%%%%%%%%%%%%%%%%%%%%%%%%%%%%%%%%%%%%%%%%%%%%%%%%%%%%%%%%%%%%%%%%%%%
\subsection{Morphisms out of $\Lambda^s (\pbar)$}
\label{subsect:exterior}

For any $s \in \nat$, $\Lambda^s (\pbar)$ is a simple functor (by convention, for $s=0$, this is $\kbar$) and it has a projective presentation 
\[
\Lambda^{s+2} (\pfin) \rightarrow \Lambda^{s+1} (\pfin) 
\rightarrow 
\Lambda^s (\pbar) 
\rightarrow 
0,
\]
provided by the truncation of the exact complex of Proposition \ref{prop:Lambda-complex}. Thus, for 
 a $\kring \fin$-module $F$, there is a natural exact sequence
\begin{eqnarray}
\label{eqn:copresent_hom_Lambda}
0
\rightarrow 
\hom_{\kring \fin} (\Lambda^s (\pbar) , F) 
\rightarrow 
\hom_{\kring \fin} (\Lambda^{s+1} (\pfin) , F) 
\rightarrow 
\hom_{\kring \fin} (\Lambda^{s+2} (\pfin) , F) .
\end{eqnarray}

Fix $t \in \nat$. 
For  a $\kring \fin$-module $F$, one has the natural isomorphism (cf. Example \ref{exam:multiplicity_Lambda})
\begin{eqnarray}
\label{eqn:hom_Lambda}
\hom_{\kring \fin} (\Lambda ^t (\pfin), F) \cong \sgn_t \otimes_{\sym_t} F(\mathbf{t}).
\end{eqnarray}
We seek to identify the right hand map in (\ref{eqn:copresent_hom_Lambda}) in terms of the respective natural isomorphisms (\ref{eqn:hom_Lambda}).

There is a bijection 
$ 
\finj (\mathbf{t}, \mathbf{t+1}) \cong \aut (\mathbf{t+1})
$, 
since an injection from $\mathbf{t}$ to $\mathbf{t+1}$ extends uniquely to an automorphism and all such arise in this way. Hence, for $i \in \finj (\mathbf{t}, \mathbf{t+1})$, one can define  $\sgn(i)\in \{ \pm 1 \}$ as that of the corresponding automorphism of $\mathbf{t+1}$. This is used in the following:

\begin{definition}
\label{defn:sigma}
Let 
$ 
\sigma_\mathbf{t} \in \kring \finj (\mathbf{t}, \mathbf{t+1}) 
$ 
be 
\begin{eqnarray}
\label{eqn:sigma_t}
\sigma_\mathbf{t}:= \sum_{i \in \finj^{\mathrm{or}} (\mathbf{t}, \mathbf{t+1})} \sgn(i) [i],
\end{eqnarray}
 where  $\finj^{\mathrm{or}} (\mathbf{t}, \mathbf{t+1})$ is the set of order-preserving inclusions. 

Let $\overline{\sigma_\mathbf{t}}$ be the image of $\sigma_\mathbf{t}$ under the projection:
\[
\kring \finj (\mathbf{t}, \mathbf{t+1}) 
\twoheadrightarrow 
\sgn_{t+1} \otimes_{\sym_{t+1}} \kring \finj (\mathbf{t}, \mathbf{t+1}) \otimes_{\sym_t} \sgn_t \cong \kring.
\]
\end{definition}

\begin{lemma}
\label{lem:identify_Hom_diff}
For $t \in \nat$ and a $\kring \fin$-module $F$, the right hand  map of (\ref{eqn:copresent_hom_Lambda}) identifies via the respective isomorphisms (\ref{eqn:hom_Lambda}) with the natural transformation induced by $\overline{\sigma_\mathbf{t} }$:
\[
\sgn_t \otimes_{\sym_t} F (\mathbf{t})
\rightarrow
 \sgn_{t+1} \otimes_{\sym_{t+1}}F( \mathbf{t+1}).
\]
\end{lemma}

\begin{proof}
The differential $\Lambda^{t+1} (\pfin) \rightarrow \Lambda^t (\pfin)$ identifies as  the composite
\[
\Lambda^{t+1} (\pfin) \rightarrow \Lambda^t (\pfin) \otimes \pfin 
\rightarrow 
\Lambda^t (\pfin), 
\]
where the first map is induced by the coproduct $\Lambda^{t+1} \rightarrow \Lambda^t \otimes \Lambda^1$ and the second is given by $\pfin \rightarrow \kring$ on the second factor. Unwinding the definitions, one checks that this yields the map induced by $\overline{\sigma_\mathbf{t}}$.
\end{proof}

\begin{proposition}
\label{prop:complex_Lambda_description}
For $s \in \nat$ and a $\kring \fin$-module $F$, the truncation of the complex of Proposition \ref{prop:Lambda-complex} yields a natural  exact complex:
\[
0
\rightarrow 
\hom_{\kring \fin} (\Lambda^s (\pbar) , F) 
\rightarrow 
\sgn_{s+1} \otimes_{\sym_{s+1}} F (\mathbf{s+1})
\stackrel{F(\overline{\sigma_\mathbf{s+1}})}{\longrightarrow}
 \sgn_{s+2} \otimes_{\sym_{s+2}}F( \mathbf{s+2}).
\]
\end{proposition}

Taking $F$ to be a standard projective, the above  can be made entirely explicit:

\begin{proposition}
\label{prop:hom_Lambda_Pfin}
For $F = \pfin_\bullet$ and $s \in \nat$, there is an isomorphism of $\kring \fin\op$-modules:
\[
\hom_{\kring \fin} (\Lambda^s (\pbar) , \pfin_\bullet) 
\cong 
\sgn_{s} \otimes_{\sym_{s}} \kring \fs (\bullet, \mathbf{s}),
\]
using the $\kring \fin\op$-module structure of Proposition \ref{prop:pfsop_finop-module} on $\kring \fs(\bullet, \mathbf{s})$.
 Moreover, with respect to this isomorphism, for $s,t \in \nat$, the short exact sequence $0 \rightarrow \Lambda^{s+1}(\pbar) \rightarrow \Lambda^{s+1}(\pfin) \rightarrow \Lambda^s (\pbar) \rightarrow 0$ induces a short exact sequence  (writing $\hom$ for $\hom_{\kring \fin}$)
\[
0
\rightarrow 
\hom(\Lambda^s (\pbar), \pfin_\mathbf{t}) 
\rightarrow 
\hom(\Lambda^{s+1}(\pfin), \pfin_\mathbf{t}) 
\rightarrow 
\hom(\Lambda^{s+1} (\pbar), \pfin_\mathbf{t}) 
\rightarrow 
0
\]
and this identifies with the short exact sequence of Proposition \ref{prop:calculate_sgn_otimes_Pfin}.
\end{proposition}

\begin{proof}
The first statement is an immediate consequence of the natural isomorphism 
 $$
\hom_{\kring \fin} (\pbar^{\otimes s}, \pfin_\bullet)
\cong 
\kring \fs (\bullet, \mathbf{s})
$$ 
provided by Theorem \ref{thm:morphism_pbar_otimes_n_Pfin}. Moreover, Corollary \ref{cor:model_right_augmentation}  implies that the inclusion $\Lambda^{s+1} (\pbar) \hookrightarrow \Lambda^{s+1} (\pfin_\mathbf{1})$ induces a surjection 
\[
\hom_{\kring \fin}(\Lambda^{s+1}(\pfin), \pfin_\bullet) 
\twoheadrightarrow 
\hom_{\kring \fin}(\Lambda^{s+1} (\pbar), \pfin_\bullet).
\]
It follows that one has a short exact sequence of the given form and the surjection corresponds to 
\[
\sgn_{s+1} \otimes_{\sym_{s+1}} \kring \fin (\bullet, \mathbf{s+1})
\twoheadrightarrow
\sgn_{s+1} \otimes_{\sym_{s+1}} \kring \fs (\bullet, \mathbf{s+1})
\]
appearing in the short exact sequence of  Proposition \ref{prop:calculate_sgn_otimes_Pfin}. The result follows.
\end{proof}

%%%%%%%%%%%%%%%%%%%%%%%%%%%%%%%%%%%%%%%%%%%%%%%%%%%%%%%%%%%%
\subsection{Calculating $\hom (\Lambda^s (\pbar), \pbar^{\otimes *})$}

In order to calculate $\hom_{\kring \fin} (\Lambda^s (\pbar), \pbar^{\otimes *})$, we first treat the exceptional factors $\pfin_{\Lambda, \bullet}$, using the Hook representation $\kring \fb\op$-modules  of Notation \ref{nota:H_S}:

\begin{lemma}
\label{lem:hom_Lambda_pbar_pfin_Lambda}
For $s \in \nat$, there is an isomorphism of $\kring \fb\op$-modules
\[
\hom_{\kring \fin} (\Lambda^s (\pbar) , \pfin_{\Lambda,\bullet}) 
\cong 
H(s).
\]
\end{lemma}

\begin{proof}
By Proposition \ref{prop:project_Lambda_n}, 
 for $n>0$, 
$
\pfin_{\Lambda, \n}
\cong 
\bigoplus_{k=1} ^n 
\Lambda^k (\pfin) \boxtimes S_{(n-k+1, 1^{k-1})}$ and, for $n=0$, $\pfin_{\Lambda, \mathbf{0}}= \kring$ and the latter is, by convention,  $\Lambda^0 (\pfin)$. 
 Now,  $\hom_{\kring \fin} (\Lambda^s (\pbar) , \Lambda^k (\pfin))$ is zero unless $s=k$, when it is $\kring$, by the results of Section \ref{subsect:structure_Lambda_pfin}. It follows, that 
\[
\hom_{\kring \fin} (\Lambda^s (\pbar) , \pfin_{\Lambda,\n})
=
\left\{ 
\begin{array}{ll}
\kring & s=0=n \\
S_{(n-s+1, 1^{s-1})} & n \geq s > 0 
\\
0 &\mbox{ otherwise.}
\end{array}
\right. 
\] 
The result follows, by the definition of $H(s)$.
\end{proof}

One deduces:

\begin{proposition}
\label{prop:hom_Lambda_pbar_pbar_otimes/Lambda}
For $s \in \nat$, there are isomorphisms of $\kring \fb\op$-modules 
\begin{eqnarray*}
\hom_{\kring \fin}(\Lambda^s (\pbar), \pbar^{\otimes \bullet}/\Lambda) \odot_{\fb\op} \triv 
\  \oplus \  
H(s) 
&\cong & 
\hom_{\kring \fin}(\Lambda^s (\pbar),\pfin_\bullet)
\\
&\cong &
\sgn_{s} \otimes_{\sym_{s}} \kring \fs (\bullet, \mathbf{s})
.
\end{eqnarray*}
Hence there are equalities in the Grothendieck group of $\kring \fb\op$-modules:
\begin{eqnarray*}
[\hom_{\kring \fin}(\Lambda^s (\pbar), \pbar^{\otimes *}/\Lambda)] 
&=&
[\sgn_{s} \otimes_{\sym_{s}} \kring \fs (-, \mathbf{s})]\odot_{\fb\op}\W(0) 
\quad - \quad \W (s)
\\ 
\ 
[\hom_{\kring \fin}(\Lambda^s (\pbar), \pbar^{\otimes *}] 
&=&
[\sgn_{s} \otimes_{\sym_{s}} \kring \fs (-, \mathbf{s})]\odot_{\fb\op}\W(0) 
\quad + \quad \W (s+1).
\end{eqnarray*}
\end{proposition}

\begin{proof}
By Theorem \ref{thm:pfin_Lambda}, 
$
\pfin_\bullet /\Lambda 
\cong 
(\pbar^{\otimes \bullet}/ \Lambda) 
\odot_{\fb\op} 
\triv
$, which gives: 
\begin{eqnarray*}
\hom_{\kring \fin}(\Lambda^s (\pbar), (\pbar^{\otimes *}/\Lambda) \odot_{\fb\op} \triv) 
&\cong & 
\hom_{\kring \fin}(\Lambda^s (\pbar), \pbar^{\otimes *}/\Lambda) \odot_{\fb\op} \triv 
\\
&\cong & 
\hom_{\kring \fin}(\Lambda^s (\pbar),\pfin_\bullet /\Lambda ).
\end{eqnarray*}
Using the decomposition $\pfin_\bullet \cong \pfin_\bullet/\Lambda \ \oplus \  \pfin_{\Lambda, \bullet}$, the first statement follows by combining Proposition  \ref{prop:hom_Lambda_Pfin}  with Lemma \ref{lem:hom_Lambda_pbar_pfin_Lambda}. 
 The statement for $[\hom_{\kring \fin}(\Lambda^s (\pbar), \pbar^{\otimes *}/\Lambda)]$ 
 then follows by inverting $-\odot_{\fb\op} \triv$ using Proposition \ref{prop:invert_odot_triv};  Proposition \ref{prop:Hook_inversion} is used to treat the contribution from the hook representations. 
 Finally, the only difference when passing from $\pbar^{\otimes *}/\Lambda$ to $\pbar^{\otimes *}$ is a copy of $[\sgn_s]$ from the direct summand $\Lambda^s (\pbar) $ of $\pbar^{\otimes s}$. The final statement  follows from the equality $[\sgn_s] - \W (s) = \W (s+1)$ given by Lemma \ref{lem:W_relations}. 
\end{proof}

%%%%%%%%%%%%%%%%%%%%%%%%%%%%%%%%%%%%%%%%%%%%%%%%%%%%%%%%%%%%%%%%%%%%%%%%%%%%%%%
\subsection{The calculation of $\hom (\pbar^{\otimes \bullet}, \pbar^{\otimes *})$}

Recall that we have the short exact sequence 
\[
0
\rightarrow 
\Lambda^{\bullet + 1} (\pbar) 
\rightarrow 
\mathbb{P}_\bullet
\rightarrow 
\pbar^{\otimes \bullet } 
\rightarrow 
0.
\]
This should be considered as a short exact sequence of $\kring \fb\op \otimes \kring \fin$-modules. To do so, the term $\Lambda^{\bullet +1} (\pbar)$ must be understood as being 
\[
\bigoplus _{k \geq 0} \Lambda^{k+1} (\pbar) \boxtimes \sgn_k.
\]
 The difference between $\mathbb{P}_\bullet$ and $\pbar^{\otimes \bullet } $ is encoded in the short exact sequences
\begin{eqnarray}
\label{eqn:ses_Lambda_bullet_k}
0
\rightarrow 
\Lambda^{k + 1} (\pbar)  \boxtimes \sgn_k
\rightarrow 
\Lambda^{k+1} (\pfin)\boxtimes \sgn_k
\rightarrow 
\Lambda^k (\pbar) \boxtimes \sgn_k
\rightarrow 
0.
\end{eqnarray}

We first consider applying  $\hom_{\kring \fin}(-, \pbar^{\otimes *}/\Lambda)$:

\begin{lemma}
\label{lem:ses_pbar/Lambda}
Applying the functor $\hom_{\kring \fin}(-, \pbar^{\otimes *}/\Lambda)$ to the short exact sequences (\ref{eqn:ses_Lambda_bullet_k}) yields the short exact sequence of $\kring \fb\op$-bimodules (writing $\hom$ for $\hom_{\kring\fin}$) 
\[
0
\rightarrow 
\hom(\Lambda^\bullet (\pbar) , \pbar^{\otimes *}/\Lambda)
\rightarrow 
\hom(\Lambda^{\bullet +1} (\pfin), \pbar^{\otimes *}/\Lambda)
\rightarrow 
\hom(\Lambda^{\bullet + 1} (\pbar) , \pbar^{\otimes *}/\Lambda)
\rightarrow 
0,
\]
where for $\bullet = k$, the group $\sym_k\op$ acts by the sign representation $\sgn_k$.
\end{lemma}

\begin{proof}
The short exact sequence is deduced from that of Proposition  \ref{prop:hom_Lambda_Pfin}, using the fact that $\pbar^{\otimes n} /\Lambda^n (\pbar)$ is a direct summand of $\pfin_\n$. The $\kring \fb\op$-action follows from that given in (\ref{eqn:ses_Lambda_bullet_k}).
\end{proof}

By construction of $\mathbb{P}_\bullet$, one has an isomorphism
$$
\hom_{\kring \fin} (\mathbb{P}_\bullet,\Lambda^* (\pbar) ) \cong \hom_{\kring \fin} (\pbar^{\otimes \bullet},\Lambda^* (\pbar)).
$$
This combined with Lemma \ref{lem:ses_pbar/Lambda} gives:
 	  
\begin{lemma}
\label{lem:relate_hom_projcover_hom_pbar_otimes}
There is an equality in the Grothendieck group of $\kring \fb$-bimodules:
\begin{eqnarray*}
[\hom_{\kring \fin} (\mathbb{P}_\bullet, \pbar^{\otimes *})]
&=&
[\hom_{\kring \fin} (\pbar^{\otimes \bullet}, \pbar^{\otimes *})]
\\
&& + 
\sum_{k \geq 1} [\hom_{\kring \fin}(\Lambda^k (\pfin), \pbar^{\otimes *}/\Lambda )] \boxtimes [\sgn_{k-1}]_{\fb},
\end{eqnarray*}
in which  $[\sgn_{k-1}]_\fb$ indicates that $\sgn_{k-1}$ is considered as a $\kring \fb$-module.
\end{lemma}

\begin{proof}
By the above discussion, Lemma  \ref{lem:ses_pbar/Lambda} implies that, on applying the functor $\hom_{\kring \fin} (-, \pbar^{\otimes *})$ to the short exact sequences
(\ref{eqn:ses_Lambda_bullet_k}), one obtains the short exact sequence of $\kring \fb$-bimodules (writing $\hom$ for $\hom_{\kring \fin}$)
\[
0
\rightarrow 
\hom (\pbar^{\otimes \bullet} , \pbar^{\otimes *})
\rightarrow 
\hom (\mathbb{P}_\bullet , \pbar^{\otimes *})
\rightarrow 
\bigoplus_{k \geq 1}
 \hom (\Lambda^k (\pbar) \boxtimes \sgn_{k-1}, \pbar^{\otimes *}/\Lambda)
\rightarrow 
0.
\]
The result follows on passing to isomorphism classes of $\kring \fb$-bimodules.
\end{proof}

Finally, we arrive at:

\begin{thm}
\label{thm:endo_pbar_otimes}
There is an  equality in the Grothendieck group of $\kring \fb$-bimodules:
\begin{eqnarray*}
[\hom_{\kring \fin}  (\pbar^{\otimes \bullet}, \pbar^{\otimes *})]
&=&
[\kring \fs ]\odot_{\fb\op} \W(0) 
 + 
\sum_{k \geq 1} \W (k) \boxtimes [\sgn_{k-1}]_\fb
\\
&=&
[\kring \fs ]\odot_{\fb\op} \W(0) 
 + 
\sum_{k \geq 1}\sum_{t \in \nat} (-1)^t [\sgn_{k+t}]_{\fb\op}\boxtimes [\sgn_{k-1}]_\fb.
\end{eqnarray*}
\end{thm}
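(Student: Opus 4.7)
The plan is to assemble the preceding results by substitution, with two large families of terms cancelling to give the claimed answer. The real content has already been distributed across the inversion machinery of Section~\ref{sect:invert}, the computation of $\hom_{\kring\fin}(\mathbb{P}_\bullet,-)$, and the earlier analysis of $\hom_{\kring\fin}(\Lambda^s(\pbar),-)$ in this section; what remains is essentially bookkeeping.

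Concretely, I would begin from the short exact sequence derived in the proof of Lemma~\ref{lem:relate_hom_projcover_hom_pbar_otimes} (obtained by applying $\hom_{\kring\fin}(-,\pbar^{\otimes *})$ to $0\to\Lambda^{\bullet+1}(\pbar)\boxtimes\sgn_\bullet\to\mathbb{P}_\bullet\to\pbar^{\otimes\bullet}\to 0$) and rearrange it to
\[
[\hom_{\kring\fin}(\pbar^{\otimes\bullet},\pbar^{\otimes *})] = [\hom_{\kring\fin}(\mathbb{P}_\bullet,\pbar^{\otimes *})] - \sum_{k\geq 1}[\hom_{\kring\fin}(\Lambda^k(\pbar),\pbar^{\otimes *}/\Lambda)]\boxtimes[\sgn_{k-1}]_\fb.
\]
Then I would substitute the formula of Theorem~\ref{thm:hom_proj_cover_pbar_otimes} for the first hom group, and the formula of Proposition~\ref{prop:hom_Lambda_pbar_pbar_otimes/Lambda} for each summand on the right,
\[
[\hom_{\kring\fin}(\Lambda^k(\pbar),\pbar^{\otimes *}/\Lambda)] = [\sgn_k\otimes_{\sym_k}\kring\fs(-,\mathbf{k})]\odot_{\fb\op}\W(0) - \W(k).
\]

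Upon plugging these expressions in, the contributions of the form $\bigl([\sgn_k\otimes_{\sym_k}\kring\fs(-,\mathbf{k})]\odot_{\fb\op}\W(0)\bigr)\boxtimes[\sgn_{k-1}]_\fb$ cancel exactly between the positive contribution coming from $[\hom_{\kring\fin}(\mathbb{P}_\bullet,\pbar^{\otimes *})]$ and the negative contribution from the $\Lambda^k(\pbar)$ sum. What survives is the first claimed equality
\[
[\hom_{\kring\fin}(\pbar^{\otimes\bullet},\pbar^{\otimes *})] = [\kring\fs]\odot_{\fb\op}\W(0) + \sum_{k\geq 1}\W(k)\boxtimes[\sgn_{k-1}]_\fb.
\]
The second equality is then immediate from the definition of $\W(k)$ in Notation~\ref{nota:H_S}.

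There is no serious obstacle left at this stage. The one thing to track carefully is the sign twisting built into the $\sym_\bullet\op$-equivariance of the defining exact sequence of $\mathbb{P}_\bullet$: the copy of $\Lambda^{k+1}(\pbar)$ appears twisted by the sign representation on the source side, and this is exactly what produces the $[\sgn_{k-1}]_\fb$ factors in the final sum. Conceptually, the ``error'' terms that arise from the failure of $\pbar^{\otimes\bullet}$ to be projective are precisely what is needed to strip off the twisted-surjection contributions appearing in $[\hom_{\kring\fin}(\mathbb{P}_\bullet,\pbar^{\otimes *})]$, leaving only the clean semi-simple ingredients $[\kring\fs]$ and the alternating sums $\W(k)$.
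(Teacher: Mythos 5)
Your proof is correct and follows essentially the same route as the paper: rearrange the equality of Lemma~\ref{lem:relate_hom_projcover_hom_pbar_otimes}, substitute Theorem~\ref{thm:hom_proj_cover_pbar_otimes} and Proposition~\ref{prop:hom_Lambda_pbar_pbar_otimes/Lambda}, cancel the $\bigl([\sgn_k\otimes_{\sym_k}\kring\fs(-,\mathbf{k})]\odot_{\fb\op}\W(0)\bigr)\boxtimes[\sgn_{k-1}]_\fb$ terms, and unpack $\W(k)$ for the second displayed equality. (Note that you write $\Lambda^k(\pbar)$ in the sum coming from the Lemma, which matches the short exact sequence in the Lemma's proof; the $\Lambda^k(\pfin)$ appearing in the Lemma's statement appears to be a typo, so your reading is the intended one.)
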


\begin{proof}
Proposition \ref{prop:hom_Lambda_pbar_pbar_otimes/Lambda} gives
\begin{eqnarray*}
 [\hom_{\kring \fin}(\Lambda^k (\pbar), \pbar^{\otimes *}/\Lambda)] 
&=&
\big([\sgn_{k} \otimes_{\sym_{k}} \kring \fs (-, \mathbf{k})]\odot_{\fb\op}\W(0) \big) \boxtimes [\sgn_{k-1}]_\fb
\\
&&
\  - \  \W(k) \boxtimes [\sgn_{k-1}]_\fb,
\end{eqnarray*}
in which the terms $\W (-)$ belong to the Grothendieck group of $\kring \fb\op$-modules. Also, by Theorem \ref{thm:hom_proj_cover_pbar_otimes}, we have
\begin{eqnarray*}
[\hom_{\kring \fin}  (\mathbb{P}_\bullet, \pbar^{\otimes *})]
&=&
[\kring \fs ]\odot_{\fb\op} \W(0) 
\\ &&  + \ 
\sum_{k\geq 1}
\Big([\sgn_k \otimes_{\sym_k}\kring \fs(-, \mathbf{k})]\odot_{ \fb\op} \W(0) \Big)  \boxtimes [\sgn_{k-1}]_\fb.
\end{eqnarray*}
The first equality  thus follows from Lemma  \ref{lem:relate_hom_projcover_hom_pbar_otimes} by taking the difference of these two expressions.
 The second equality  follows by developing the expression $\W(k)$.
\end{proof}

\begin{remark}
A result equivalent to Theorem \ref{thm:endo_pbar_otimes} was first proved in \cite{MR4518761} by very different methods. An alternative approach is also given in \cite{2024arXiv240711627P}.
\end{remark}

%\nocite{*}
%\bibliographystyle{amsplain}
%\bibliography{fin.bib}

\providecommand{\bysame}{\leavevmode\hbox to3em{\hrulefill}\thinspace}
\providecommand{\MR}{\relax\ifhmode\unskip\space\fi MR }
% \MRhref is called by the amsart/book/proc definition of \MR.
\providecommand{\MRhref}[2]{%
  \href{http://www.ams.org/mathscinet-getitem?mr=#1}{#2}
}
\providecommand{\href}[2]{#2}

\end{document}